\newenvironment{NB}{
\color{red}{\bf NB}. \footnotesize
}{}
\newtheorem{thm}{Theorem}[section]
\newtheorem{defn}[thm]{Definition}
\newtheorem{ex}[thm]{Example}
\newtheorem{prop}[thm]{Proposition}
\newtheorem{cor}[thm]{Corollary}
\newtheorem{lem}[thm]{Lemma}
\newtheorem{rem}[thm]{Remark}
\newtheorem{ques}[thm]{Question}
\newcommand{\mf}[1]{{\mathfrak{#1}}}
\newcommand{\mr}[1]{{\mathrm{#1}}}
\newcommand{\mb}[1]{{\mathbf{#1}}}
\newcommand{\bb}[1]{{\mathbb{#1}}}
\newcommand{\mca}[1]{{\mathcal{#1}}}
\newcommand{\Hom}{\mr{Hom}}
\newcommand{\dimv}{\underline{\dim}}
\newcommand{\Z}{\bb{Z}}
\newcommand{\Zh}{\bb{Z}_{\mathrm{h}}}
\newcommand{\C}{\bb{C}}
\newcommand{\CP}{\bb{P}}
\newcommand{\R}{\bb{R}}
\newcommand{\A}{\mca{A}}
\newcommand{\F}{\mca{F}}
\newcommand{\mcaS}{\mca{S}}
\newcommand{\vv}{\mb{v}}
\newcommand{\Ih}{I_{\mathrm{h}}}
\newcommand{\perv}{{}^{-1}\mr{Per}(Y/X)}
\newcommand{\cohy}{\mr{Coh}(Y_\sigma)}
\newcommand{\cohcy}{\mr{Coh}_{\mathrm{cpt}}(Y_\sigma)}
\newcommand{\dcohy}{D^b\mr{Coh}(Y_\sigma)}
\newcommand{\dcohcy}{D^b_{\mathrm{cpt}}\mr{Coh}(Y_\sigma)}
\newcommand{\moda}{\mr{mod}A_\sigma}
\newcommand{\modfa}{\mr{mod}_{\mathrm{fin}}A_\sigma}
\newcommand{\dmoda}{D^b(\mr{mod}A_\sigma)}
\newcommand{\dmodfa}{D^b_{\mathrm{fin}}(\mr{mod}A_\sigma)}
\newcommand{\D}[2]{\mathcal{D}^{\zeta^\circ}_{\mathrm{fin}}[#1,#2)}
\newcommand{\Dinf}[2]{\mathcal{D}^{\zeta^\circ}[#1,#2)}
\newcommand{\catAzeta}{\mathcal{A}_{\mathrm{fin}}^\zeta}
\newcommand{\catAinfzeta}{\mathcal{A}^\zeta}
\newcommand{\MM}{\mf{M}(\sss\underline{\nu},\underline{\lambda},\zeta;\vv)}
\newcommand{\MMM}{\mf{M}^{\mathrm{ncDT}}(\zeta,\underline{\nu},\underline{\lambda}\,;\vv)}
\newcommand{\V}{\mca{V}}
\newcommand{\unu}{\underline{\nu}}
\newcommand{\ulam}{\underline{\lambda}}
\newcommand{\uemp}{\underline{\emptyset}}
\newcommand{\I}{\mca{I}}
\newcommand{\II}{\mca{I}_{\underline{\nu},\underline{\lambda}}}
\newcommand{\III}{P^{\zeta}_{\underline{\nu},\underline{\lambda}}}
\newcommand{\PP}{P^\zeta_{\underline{\nu},\underline{\lambda}}}
\newcommand{\zc}{\zeta^\circ}
\newcommand{\Apair}{\sigma\sss;\sss\underline{\nu},\underline{\lambda}}
\newcommand{\ztpair}{\sigma,\zeta\sss;\sss\underline{\nu},\underline{\lambda}}
\newcommand{\K}{\mathbb{K}}
\newcommand{\fock}{({\wedge^{\hspace{-2pt}\frac{\infty}{2}}})_0}
\newcommand{\pg}{\overset{+}{\succ}}
\newcommand{\mg}{\overset{-}{\succ}}
\newcommand{\tenchi}{{}^{\mathrm{t}}}
\newcommand{\Vmin}{\V_{\mathrm{min}}}
\newcommand{\type}{\sigma,\theta\sss;\sss\underline{\nu}, \underline{\lambda}}
\newcommand{\surj}{\twoheadrightarrow}
\newcommand{\sss}{\hspace{0.5pt}}
\numberwithin{equation}{section}
\title{Non-commutative Donaldson-Thomas theory and vertex operators}
\author{Kentaro Nagao\\
RIMS, Kyoto University\\
  Kyoto 606-8502, Japan
}
\begin{document}

\maketitle
\begin{abstract}
In \cite{open_3tcy}, we introduced a variant of non-commutative Donaldson-Thomas theory in a combinatorial way, which is related to the topological vertex by a wall-crossing phenomenon. 
In this paper, we (1) provide an alternative definition in a geometric way, (2) show that the two definitions agree with each other and (3) compute the invariants using the vertex operator method, following \cite{ORV} and \cite{young-mckay}. 
The stability parameter in the geometric definition determines the order of the vertex operators and 
hence we can understand the wall-crossing formula in non-commutative Donaldson-Thomas theory as the commutator relation of the vertex operators.
\end{abstract}

\section*{Introduction}
Let $X:=\{x_1x_2={x_3}^{L^+}{x_4}^{L^-}\}$ be a affine toric Calabi-Yau $3$-fold, which corresponds to the trapezoid with height 1, with length $L_+$ edge at the top and $L_-$ at the bottom. Let $Y_\sigma \to X$ be a crepant resolution of $X$.
Note that $Y_\sigma$ has $L+2$ affine lines as torus invariant closed subvarieties ($L:=L_++L_-$).
In other words, there are $L+2$ open edges in the toric graph of $Y_\sigma$.
Given an $(L+2)$-tuple of Young diagrams
\[
(\sss\underline{\nu},\underline{\lambda}\sss)=\Bigl(\nu_+,\nu_-,\lambda^{(1/2)},\ldots,\lambda^{(L-1/2)}\sss \Bigr)\quad (L:=L^++L^-)
\]
associated with $L+2$ open edges (see Figure \ref{fig_toric_graph}), we can define a torus invariant ideal sheaf $\II$ on $Y_\sigma$ (\S \ref{subsec_21}) and a moduli space $\mathfrak{M}^{\mathrm{DT}}(\sss\underline{\nu},\underline{\lambda}\sss)$ of quotient sheaves of $\II$
(\S \ref{subsec_openDT}).
Note that $\mathcal{I}_{\underline{\emptyset},\underline{\emptyset}}=\mathcal{O}_{Y_\sigma}$ and hence the moduli space $\mathfrak{M}^{\mathrm{DT}}(\sss\underline{\emptyset},\underline{\emptyset}\sss)$ is the Hilbert scheme of closed subschemes of $Y_\sigma$.
We define Euler characteristic version of {\it open} (commutative) {\it Donaldson-Thomas invariants} by the Euler characteristics of the connected components of $\mathfrak{M}^{\mathrm{DT}}(\sss\underline{\nu},\underline{\lambda}\sss)$\footnote{The word ``open'' stems from such terminologies as ``open topological string theory''. According to \cite{TV}, open topological string partition function is given by summing up the generating functions of these invariants over Young diagrams.}.
The torus action of $Y_\sigma$ induces the torus action on $\mathfrak{M}^{\mathrm{DT}}(\sss\underline{\nu},\underline{\lambda}\sss)$.
The torus fixed point set is isolated and parametrized in terms of $L$-tuples of $3$-dimensional Young diagrams.
Thus the generating function of the open Donaldson-Thomas invariants 
can be described in terms of topological vertex (\cite{TV,mnop}, see \S \ref{subsec_tv_via_vo}).

Let $A_\sigma$ be a non-commutative crepant resolution of the affine toric Calabi-Yau $3$-fold $X$.
We can identify the derived category of coherent sheaves on $Y_\sigma$ and the one of $A_\sigma$-modules by a derived equivalence.
A parameter $\zeta$ gives a Bridgeland's stability condition of this derived category, and hence a core $\catAinfzeta$ of a t-structure  on it (Definition \ref{defn_catA}).
In fact, we have two specific parameters such that the corresponding t-structures coincide with the ones given by $Y_\sigma$ or $A_\sigma$ respectively. 
Given an element in  $\A^\zeta$, we can restrict it to get a sheaf on the smooth locus $X^{\mathrm{sm}}$. 
Since the singular locus $X^{\mathrm{sing}}$ is compact, 
it makes sense to study those elements in $\A^\zeta$ which are isomorphic to $\II$ outside a compact subset of $X$, or in other words, those elements in $\A^\zeta$ which have the same asymptotic behavior as $\II$. 
We will study the moduli spaces of such objects as noncommutative analogues of $\mathfrak{M}^{\mathrm{DT}}(\sss\underline{\nu},\underline{\lambda}\sss)$. 
\begin{NB}
We want to study moduli spaces 
\[
\bigl\{
\mathcal{I}_{(\sss\underline{\nu},\underline{\lambda}\sss)}\to V\mid V\in \mathrm{mod}_{\mathrm{fin}}\hspace{-1pt}A_\sigma
\bigr\}
\]
of pairs of finite dimensional $A_\sigma$-modules $V$ and morphisms $\mathcal{I}_{(\underline{\nu},\underline{\lambda})}\to V$ in the derived category
\footnote{Let $H^*_{\moda}(-)$ denote the cohomology with respect to the t-structure corresponding to $\moda$. Since the derived equivalence is given by a tilting, we have $H^i_{\moda}(\mathcal{I}_{(\underline{\nu},\underline{\lambda})})=0$ unless $i=0,-1$. In particular, giving a morphism $\mathcal{I}_{(\underline{\nu},\underline{\lambda})}\to V$ is equivalent to giving a morphism $H^0_{\moda}(\mathcal{I}_{(\underline{\nu},\underline{\lambda})})\to V$.}
.
\end{NB}%
In general the ideal sheaf $\II$ is not an element in $\A^\zeta$, 
however $\PP:=H^0_{\A^\zeta}(\II)$ is always in $\A^\zeta$.
We will construct the moduli space $\mathfrak{M}^{\mathrm{ncDT}}(\zeta,\underline{\nu},\underline{\lambda})$ of quotients of $\PP$ in $\A^\zeta$
as a GIT quotient (\S \ref{subsec_proof}).
Note that $\mathfrak{M}^{\mathrm{ncDT}}(\zeta,\underline{\emptyset},\underline{\emptyset})$ is the moduli space we have studied in \cite{3tcy}. 
We define Euler characteristic version of {\it open non-commutative Donaldson-Thomas invariants} by the Euler characteristics of the connected components of $\mathfrak{M}^{\mathrm{ncDT}}(\zeta,\underline{\nu},\underline{\lambda})$\footnote{The reader may also refer to \cite{nagao-yamazaki} in which we study the invariants in the physics context.}.

\begin{NB}
In order to compute the invariants, we will give another description of the moduli space. 
Given $\zeta$, we can construct a core $\catAzeta$ of a t-structure of the derived category and an element $\III$ in $\catAzeta$.
We define
\[
\mathcal{M}(\sss\underline{\nu},\underline{\lambda},\zeta)
:=
\bigl\{\III\twoheadrightarrow F\mid F\in \catAzeta\bigr\}.
\]
Then we have a natural isomorphism between $\mathfrak{M}(\sss\underline{\nu},\underline{\lambda},\zeta)$ and $\mathcal{M}(\sss\underline{\nu},\underline{\lambda},\zeta)$. 
\end{NB}
The torus action on $Y_\sigma$ induces a torus action on the moduli $\mathfrak{M}^{\mathrm{ncDT}}(\zeta,\underline{\nu},\underline{\lambda})$.
We will compute the Euler characteristic by counting the number of torus fixed points. 
For a generic $\zeta$, the core $\catAinfzeta$ of the t-structure is isomorphic to the category of $A_\sigma^\zeta$-modules, 
where $A_\sigma^\zeta$ is associated with a quiver with a potential. 
Hence, we can describe the torus fixed point set on $\mathfrak{M}^{\mathrm{ncDT}}(\zeta,\underline{\nu},\underline{\lambda})$ in terms of a {\it crystal melting model} (\cite{ORV,ooguri-yamazaki}), which we have studied in \cite{open_3tcy}.
In fact, a particle in the grand state crystal gives a weight vector in 
$\PP$ with respect to the torus action, and a crystal obtained by removing a finite number of particles from the grand state crystal gives a torus fixed point in $\mathfrak{M}^{\mathrm{ncDT}}(\zeta,\underline{\nu},\underline{\lambda})$ (\S \ref{subsec_crystal_and_fixedpoint}). 
The invariants in this paper agree with the ones defined in \cite{open_3tcy}.

Finally, we provide explicit formulas for the generating functions of the Euler characteristic version of the open commutative and non-commutative Donaldson-Thomas invariants 
using vertex operator method, following \cite{ORV}, \cite{young-mckay} and \cite{bryan-young}\footnote{During preparing this paper, the author was informed that Piotr Sulkowski and Benjamin Young provide similar computations independently (\cite{Sulkowski,Young}). }. 
The order of the vertex operators is determined by the chamber in which the parameter $\zeta$ is. 
Hence we can understand the wall-crossing formula as the commutator relation of the vertex operators.

In Szendroi's original non-commutative Donaldson-Thomas theory (\cite{szendroi-ncdt}) the moduli spaces admit symmetric obstruction theory and the invariants are defined as the {\it virtual counting} of the moduli spaces in the sense of Behrend-Fantechi (\cite{behrend-fantechi-intrinsic})\footnote{Virtual counting coincides with the weighted Euler characteristic weighted by the Behrend function.}.
In the case when $\nu_+=\nu_-=\emptyset$, we show that the moduli space $\mathfrak{M}^{\mathrm{ncDT}}(\zeta,\underline{\emptyset},\underline{\lambda})$ admits a symmetric obstruction theory (\S \ref{subsec_potential}). 
Using the result in \cite{behrend-fantechi}, we can verify that the virtual counting coincide with the (non-weighted) Euler characteristics up to signs as in \cite{szendroi-ncdt,ncdt-brane,nagao-nakajima,3tcy} (\S \ref{subsec_w}).
We can also compute the generating function of the weighted (or non-weighted) Euler characteristics using Joyce-Song's theory (\cite{joyce-song}, or \cite{joyce-4}).

\smallskip

The plan of this paper is as follows: 
Section \ref{sec_1} contains basic observations on the core $\catAinfzeta$ of the t-structure of the derived category.
In Section \ref{sec_2}, the definition of Euler characteristic version of open non-commutative Donaldson-Thomas invariants is provided.
Then, we compute the generating function using vertex operators in Section \ref{sec_3}. 
Finally, we study open Donaldson-Thomas invariants and topological vertex as  ``limits'' of open non-commutative Donaldson-Thomas invariants in Section \ref{sec_5}.
Section \ref{sec_2}, Section \ref{sec_3} and Section \ref{sec_5} are the main parts of this paper.
In Section \ref{sec_4} we construct the moduli spaces used in Section \ref{sec_2} and \ref{sec_5} to define invariants. 
Moreover, we construct symmetric obstruction theory on the moduli space in the case of $\nu_+=\nu_-=\emptyset$ in Section \ref{subsec_potential}. The relation between weighted Euler characteristic and Euler characteristic is discussed in Section \ref{subsec_w}.
Throughout this paper we work on the half of the whole space of stability parameters. We will have a discussion about the other half of the stability space in Section \ref{subsec_NCPT}.
The computation in Section \ref{sec_3} depends on an explicit combinatorial description of the derived equivalence. We leave it until Section \ref{sec_7} since it is very technical.

\subsection*{Acknowledgement}
The author is supported by JSPS Fellowships for Young Scientists (No.\ 19-2672).
He thanks to Jim Bryan for letting him know the result of \cite{bryan-young} and recommending him to apply the vertex operator method  in the setting of this paper.
He also thanks to Osamu Iyama, Hiroaki Kanno, Hiraku Nakajima, Piotr Sulkowski, Yukinobu Toda, Masahito Yamazaki and Benjamin Young for useful comments.

This paper was written while the author is visiting the University of Oxford.
He is grateful to Dominic Joyce for the invitation and to the Mathematical Institute for hospitality.

\section*{Notations}
Let $\Zh$ denote the set of half integers and $L$ be a positive integer.
We set $I:=\Z/L\Z$ and $I_{\mathrm{h}}:=\Zh/L\Z$. The two natural projections $\Z\to I$ and $\Zh\to I_{\mathrm{h}}$ are denoted by the same symbol $\pi$.
We sometimes identify $I$ and $\Ih$ with $\{0,\ldots,L-1\}$ and $\{1/2,\ldots,L-1/2\}$ respectively.
The symbols $n$, $h$, $i$ and $j$ are used for elements in $\Z$, $\Zh$, $I$ and $\Ih$ respectively.

\smallskip

Throughout this paper, the following data play crucial roles:
\begin{itemize}
\item a map $\sigma\colon \Zh\to \{\pm\}$, which determines the crepant resolution $Y_\sigma\to X$ and the non-commutative crepant resolution $A_\sigma$, 
\item a pair of Young diagram $\unu=(\nu_+,\nu_-)$ and an $L$-tuple of Young diagrams $\ulam=(\lambda^{(1/2)},\ldots,\lambda^{(L-1/2)})$, which determines the ``asymptotic behaviors" of (complexes of) sheaves we will count, 
\item a stability parameter $\zeta$, which determines the t-structure where we will work on, and
\item a bijection $\theta\colon\Zh\to\Zh$, which determines the chamber where the stability parameter $\zeta$ is.
\end{itemize}
We sometimes identify a Young diagram $\mu$ with a map $\mu\colon\Zh\to \{\pm\}$ such that $\mu(h)=-$ for $h\ll 0$ and $\mu(h)=+$ for $h\gg 0$\footnote{Such a map $\mu$ is called a {\it Maya diagram}. See, for example, \cite[\S 2]{nagao-frenkel-kac} for the correspondence between a Young diagram and a Maya diagram.}.
We identify an $L$-tuple of Young diagrams $\ulam$ with a map $\ulam\colon \Zh\to\{\pm\}$ by 
\[
\ulam(h)=\lambda^{(\pi(h))}\biggl(\frac{h-\pi(h)}{L}+\frac{1}{2}\biggr).
\]

We define the following categories:
\begin{description}
\item[$\cohy$]: the Abelian category of coherent sheaves on $Y_\sigma$, 
\item[$\cohcy$] : the full subcategory of $\cohy$ consisting of coherent sheaves with compact supports, 
\item[$\dcohy$] : the bounded derived category $\cohy$, 
\item[$\dcohcy$] : the full subcategory of $\dcohy$ consisting of complexes with compactly supported cohomologies, 
\item[$\moda$] : the Abelian category of finitely generated left $A_\sigma$-modules, 
\item[$\modfa$] : the full subcategory of $\moda$ consisting of finite dimensional modules, 
\item[$\dmoda$] : the bounded derived category of $\moda$, 
\item[$\dmodfa$] : the full subcategory of $\dmoda$ consisting of complexes with finite dimensional cohomologies.
\end{description}

\section{T-structure and chamber structure}\label{sec_1}
\subsection{Non-commutative and commutative crepant resolutions}\label{suesec_11}
Let $\sigma$ be a map from $\Ih$ to $\{\pm\}$. 
In \cite{3tcy}, following \cite{hanany-vegh}, we introduced a quiver with a potential $A_\sigma=(Q_\sigma,w_\sigma)$, which is a non-commutative crepant resolution of $X$ (\cite{3tcy}). 
First, we set 
\begin{align*}
H(\sigma)&:=\bigr\{n\in\Z\mid \sigma(n-1/2)=\sigma(n+1/2)\bigl\},\quad I_H(\sigma):=\pi(H(\sigma)),\\
S(\sigma)&:=\bigr\{n\in\Z\mid \sigma(n-1/2)\neq \sigma(n+1/2)\bigl\},\quad I_S(\sigma):=\pi(S(\sigma)).
\end{align*}
The symbol $H$ and $S$ represent ``hexagon" and ``square" respectively. We use such notations since an element in each set corresponds to a hexagon or square in the dimer model (see \cite[\S 1.2]{3tcy}).
The vertices of $Q_{\sigma}$ are parametrized by $I$ and the arrows are given by 
\[
\Biggr(\,\bigsqcup_{j\in\Ih}h_j^+\Biggr)
\,\sqcup\,
\Biggr(\,\bigsqcup_{j\in\Ih}h_j^-\Biggr)
\,\sqcup\,
\Biggr(\bigsqcup_{i\in I_H(\sigma)}r_i\Biggr).
\]
Here $h^+_j$ (resp. $h^-_j$) is an edge from $j-1/2$ to $j+1/2$ (resp. from $j+1/2$ to $j-1/2$) and $r_i$ is an edge from $i$ to itself.
See \cite[\S 1.2]{3tcy} for the definition of the potential $w_{\sigma}$.
\begin{NB}
\begin{ex}
In Figure \ref{fig27}, we show the quiver $Q_{\sigma,\mathrm{id}}$ for $\sigma$  as in Example \ref{ex2}.
\begin{figure}[htbp]
  \centering
  \input{fig27.tpc}
  \caption{the quiver $w_{\sigma,\theta}$}
\label{fig27}
\end{figure}
\end{ex}
\begin{rem}
\begin{itemize}
\item The center of $A$ is isomorphic to $R:=\C[x,y,z,w]/(xy=z^{L_+}w^{L_-})$.
In \cite[Theorem 1.14 and 1.19]{3tcy}, we showed that $A$ is a non-commutative crepant resolution of $X=\mathrm{Spec}\, R$.
\item The affine $3$-fold $X$ is toric. 
In fact, 
\[
T=\mathrm{Spec}\,\tilde{R}:=\mathrm{Spec}\,\C[x^\pm,y^\pm,z^\pm,w^\pm]/(xy=z^{L_+}w^{L_-})
\]
is a $3$-dimensional torus.
The torus $T$ acts on $A$ by $\tilde{R}\to $
\end{itemize}
\end{rem}
\end{NB}

Let $P_i$ (resp. $S_i$) be the projective (resp. simple) $A_\sigma$-module corresponding to the vertex $i$.
Let $K_{\mathrm{num}}(\modfa)$ be the numerical Grothendieck group of $\modfa$, which we identify with $\Z^I$ by the natural basis $\{[S_i]\}$.
We put $\delta=(1,\ldots,1)\in K_{\mathrm{num}}(\modfa)$.

We identity the dual space $(K_{\mathrm{num}}(\modfa)\otimes \R)^*$ with $\R^I$ by the dual basis of $\{[S_i]\}$. 
Take $\zeta^\circ_{\mathrm{cyc}}:=(-L+1,1,1,\ldots,1)\in\R^I$.
\begin{thm}[\protect{\cite{ishii-ueda}}]
The moduli space of $\zeta^\circ_{\mathrm{cyc}}$-stable \textup{(}$=$ $\zeta_{\mathrm{cyc}}^\circ$-semistable\textup{)} $A_\sigma$-modules with dimension vectors $=\delta$ gives a crepant resolution of $X$. 
\end{thm}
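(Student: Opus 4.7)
The plan is to follow the standard GIT template for producing toric crepant resolutions from brane-tiling quivers, in three steps.

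First, I would translate the stability condition into a cyclicity condition on modules. Since $\underline{\dim}(V)=\delta$ forces each $V_i$ to be one-dimensional and $\zeta^\circ_{\mathrm{cyc}}\cdot \delta = 0$, the phase is admissible. For any subrepresentation $W\subseteq V$, writing $d_i := \dim W_i \in \{0,1\}$, one has
\[
\zeta^\circ_{\mathrm{cyc}}\cdot \underline{\dim}(W) \;=\; -(L-1)\,d_0 \;+\; \sum_{i \neq 0} d_i.
\]
A one-line case analysis on $d_0 \in \{0,1\}$ shows that $\zeta^\circ_{\mathrm{cyc}}$-semistability is equivalent to $V$ being generated as an $A_\sigma$-module by its component $V_0$; moreover, for any proper nontrivial subobject the inequality is strict, so semistability coincides with stability at the dimension vector $\delta$. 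This is the reason the parameter $\zeta^\circ_{\mathrm{cyc}}$ deserves the name ``cyclic''.

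Second, I would present the moduli as a GIT quotient. The representation variety of $(Q_\sigma, w_\sigma)$ at dimension vector $\delta$ is an affine subscheme $R \subset \mathbb{C}^N$, where $N$ is the number of arrows of $Q_\sigma$ and the defining equations are the derivatives of $w_\sigma$. The moduli space is $R/\!/_{\zeta^\circ_{\mathrm{cyc}}} \bigl((\mathbb{C}^*)^I/\mathbb{C}^*\bigr)$. Since all components have dimension one and the potential relations are binomial in the arrow coordinates, both $R$ and the quotient are toric; by construction the quotient has a projective map to $\mathrm{Spec}\bigl(R^{(\mathbb{C}^*)^I}\bigr) = X$.

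The third and decisive step is to match this toric variety with the prescribed crepant resolution $Y_\sigma$. Here I would use the dimer-model dictionary recalled in \cite{3tcy} following Hanany--Vegh: perfect matchings of the bipartite graph dual to $(Q_\sigma, w_\sigma)$ correspond to lattice points of the trapezoid defining $X$, and the chamber decomposition of the stability space is indexed by unit-simplex triangulations of that trapezoid, each chamber yielding a crepant resolution by standard toric geometry. The task is therefore to check that $\zeta^\circ_{\mathrm{cyc}}=(-L+1,1,\ldots,1)$ lies precisely in the chamber whose triangulation reproduces the fan of $Y_\sigma$. Concretely, one shows that the cyclic representations generated at vertex $0$, after picking affine slices of the $(\mathbb{C}^*)^I/\mathbb{C}^*$-orbits, are parametrized by the standard affine charts of $Y_\sigma$ glued via the monomial identities arising from the arrows $h^\pm_j$ and $r_i$ and the potential $w_\sigma$. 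The hard part of the argument is this combinatorial identification of the chamber of $\zeta^\circ_{\mathrm{cyc}}$ with the triangulation giving $Y_\sigma$; this is the real substance of Ishii--Ueda's theorem and is where the specific structure of $Q_\sigma$ enters in an essential way. Once it is in place, the crepancy of the resulting projective morphism to $X$ is automatic.
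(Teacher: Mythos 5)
The paper does not prove this statement at all: it is imported verbatim from \cite{ishii-ueda}, so there is no in-paper argument to compare yours against. Judged on its own terms, your first two steps are sound. The computation $\zeta^\circ_{\mathrm{cyc}}\cdot\underline{\dim}(W)=-(L-1)d_0+\sum_{i\neq 0}d_i$ together with the case split on $d_0$ does show that at dimension vector $\delta$ semistability coincides with stability and is equivalent to generation by $V_0$ (modulo fixing the sign convention for King stability, which you should state explicitly), and the presentation of the moduli space as a GIT quotient of the binomial representation scheme, projective over $\mathrm{Spec}(R^{(\mathbb{C}^*)^I})=X$, is standard.

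The gap is that the entire content of the theorem sits in your third step, which you name but do not carry out. Two things are genuinely nontrivial there and cannot be waved through. First, smoothness and irreducibility of the moduli space: the representation scheme cut out by the binomial relations $\partial w_\sigma$ is in general reducible and possibly non-reduced, so one must identify the distinguished component containing the cyclic representations and prove that the quotient of \emph{that} component is smooth; this is where the consistency of the brane tiling (equivalently, the explicit structure of $Q_\sigma$, $w_\sigma$ from \cite{hanany-vegh}) is used, via the perfect-matching description of the toric data. Second, the matching of the chamber of $\zeta^\circ_{\mathrm{cyc}}$ with the particular unimodular triangulation whose fan is that of $Y_\sigma$ is a concrete combinatorial computation, not a formality. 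As written, your proposal is a correct roadmap of the Ishii--Ueda strategy rather than a proof: everything before the decisive identification is routine, and the identification itself is asserted. (Your final remark is fine: once the fan is identified with that of $Y_\sigma$, crepancy is immediate since the triangulation is into basic simplices of the height-one polygon.)
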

Let $Y_{\sigma}$ denote this crepant resolution. 
\begin{thm}[\protect{\cite[\S 1]{3tcy}\label{thm_perv}\footnote{It is known by \cite{Mozgovoy1,Bocklandt} that a quiver with a potential given from a brane tiling satisfying the ``consistency condition" (\cite{ncdt-brane,Davison,Broomhead,Ishii_Ueda2}) is a non-commutative crepant resolution over its center (\cite{vandenbergh-nccr}). The claim of this theorem is a little bit stronger, i.e. $A_\sigma$ is given by the construction in \cite{vandenbergh-3d} and hence we have $\perv\simeq \moda$. We will use this equivalence of the Abelian categories in Section \ref{sec_5}.}, see \S \ref{subsec_openDT}}]
We have a derived equivalence between $\dcohy$ and $\dmoda$, which restricts to an equivalence between $\dcohcy$ \textup{(}resp. $\perv$\textup{)} and $\dmodfa$ \textup{(}resp. $\moda$\textup{)}.
\end{thm}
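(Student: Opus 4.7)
The plan is to produce the derived equivalence as a tilting equivalence in the sense of Van den Bergh \cite{vandenbergh-3d}, and then show by a support argument that it restricts to the stated subcategories.

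First, I would use the moduli description of $Y_\sigma$. By the preceding theorem of Ishii--Ueda, $Y_\sigma$ is the fine moduli space of $\zeta^\circ_{\mathrm{cyc}}$-stable $A_\sigma$-modules of dimension vector $\delta$; let $\mathcal{V} = \bigoplus_{i\in I} \mathcal{V}_i$ be the tautological bundle on $Y_\sigma$, with $\mathcal{V}_i$ of rank $1$. The candidate tilting bundle is $\mathcal{T} := \bigoplus_{i\in I}\mathcal{V}_i^\vee$ (or an appropriate summand thereof), whose endomorphism algebra receives a natural map from $A_\sigma$ sending a path to the composition of the corresponding bundle maps induced by the universal structure. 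The first step is to check this map $A_\sigma \to \End(\mathcal{T})$ is an isomorphism; this is a local calculation on each toric chart of $Y_\sigma$, where $\mathcal{T}$ restricts to a direct sum of trivial line bundles and the relations imposed by $w_\sigma$ match the relations among the transition functions.

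Next, I would verify the tilting property: $\Ext^{>0}_{Y_\sigma}(\mathcal{T},\mathcal{T})=0$ and $\mathcal{T}$ generates $\dcohy$. Vanishing can be established locally by checking that each summand $\mathcal{V}_i$ is given explicitly by the GIT construction of $Y_\sigma$, so Koszul resolutions on the affine charts compute the Ext groups; the sum of these local contributions vanishes because $A_\sigma$ arises from a consistent brane tiling, which is precisely the input needed for Van den Bergh's framework (as indicated in the footnote citing \cite{vandenbergh-3d, Mozgovoy1, Bocklandt}). Generation follows from the classical argument that the summands of $\mathcal{T}$ separate points and tangent vectors on $Y_\sigma$, together with $\dim Y_\sigma = 3$. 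Then $R\Hom(\mathcal{T}, -)\colon \dcohy \to \dmoda$ is an equivalence of triangulated categories, with quasi-inverse $- \otimes^{\LL}_{A_\sigma}\mathcal{T}$.

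It remains to check the two restrictions. For the category $\moda$, the heart of the induced t-structure on $\dcohy$ is by definition $\perv$ in the sense of Bridgeland--Van den Bergh, so the restriction $\perv \simeq \moda$ is automatic (and this is really the content of the ``slightly stronger" statement in the footnote). For the compact-support restriction, one direction is clear: if $\mca{F}$ has compact support in $Y_\sigma$, then each $\Ext^*(\mathcal{T},\mca{F})$ is computed on a compact subscheme and is hence finite-dimensional. Conversely, an $A_\sigma$-module of finite total dimension corresponds under the equivalence to a complex whose cohomology sheaves have vanishing generic stalk on $X$ (since the non-exceptional locus is represented by simple modules of dimension vector $\delta$, while finite-dimensional modules have $0$-dimensional support in $X$); compactness of the singular locus of $X$ then forces compact support on $Y_\sigma$.

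The main obstacle is the tilting verification in the second step: ensuring both the Ext-vanishing and the identification $A_\sigma \cong \End(\mathcal{T})$ of algebras (not merely of vector spaces), where the potential $w_\sigma$ must match the relations coming from the geometry. This is where the consistency condition on $\sigma$, encoded in the combinatorics of \S\ref{suesec_11}, is essential and where the argument must pass through either a direct toric calculation chart-by-chart or an appeal to the general machinery of \cite{vandenbergh-3d} applied to the specific quiver $(Q_\sigma, w_\sigma)$.
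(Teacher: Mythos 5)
Your overall architecture --- a tilting bundle on $Y_\sigma$ that is a direct sum of line bundles, checked chart-by-chart on the toric cover, with the equivalence given by $R\Hom(\mathcal{T},-)$ and the compact-support restriction handled by a support argument via properness of $f\colon Y_\sigma\to X$ --- is essentially the route the paper takes; the paper itself simply quotes \cite{3tcy}, where the tilting bundle is $\mathcal{P}=\mathcal{O}_{Y_\sigma}\oplus\bigoplus_i L_i$ produced by Van den Bergh's procedure from the contraction $f$, rather than as the dual of the tautological bundle on the Ishii--Ueda moduli space. Your handling of the compactly supported subcategories is also fine: $\Ext^*(\mathcal{T},\mathcal{F})$ is finite-dimensional when $\mathcal{F}$ has compact support, and conversely a finite-dimensional $A_\sigma$-module has zero-dimensional support on $X$, hence compact support on $Y_\sigma$ by properness of $f$.

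The one genuine gap is the claim that ``the heart of the induced t-structure on $\dcohy$ is by definition $\perv$.'' It is not: $\perv$ has an intrinsic definition (recalled in \S\ref{subsec_openDT} via conditions on $H^{0}$ and $H^{-1}$, on $R^0f_*$ and $R^1f_*$, and a Hom-vanishing against sheaves killed by $Rf_*$), and the identification of that heart with $\moda$ is precisely Van den Bergh's theorem that $\perv$ admits a projective generator of the special form $\mathcal{O}_{Y_\sigma}\oplus\mathcal{M}$ with $\mathcal{M}$ globally generated and the appropriate $R^1f_*$-vanishing. For an arbitrary tilting bundle --- in particular for the tautological bundle of the moduli space --- the induced heart need not be $\perv$, so on your route you must either verify directly that $\mathcal{T}$ is a projective generator of $\perv$ (in particular that $\mathcal{O}_{Y_\sigma}$ occurs as a summand up to normalization and that the remaining summands are generated by global sections), or identify $\mathcal{T}$ with Van den Bergh's bundle. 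This is the content of the footnote's ``slightly stronger'' remark, which you have read in the wrong direction: $\perv\simeq\moda$ is a \emph{consequence} of $A_\sigma$ arising from the construction of \cite{vandenbergh-3d}, not something automatic for any tilting presentation of $\dcohy$.
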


\subsection{Stability condition and tilting}\label{subsec_stability_and_tilting}
For $\zeta^\circ\in (K_{\mathrm{num}}(\modfa)\otimes \R)^*\simeq \R^I$ such that $\zeta^\circ\cdot \delta=0$, we define the group homomorphism
\[
Z_{\zeta^\circ}\colon K_{\mathrm{num}}(\mathrm{mod}_{\mathrm{fin}}A_\sigma)\to\C
\]
by 
\[
Z_{\zeta^\circ}(\mathbf{v}):=\bigl(-\zeta^\circ+\eta\sqrt{-1}\bigr)\cdot \mathbf{v}
\]
where $\eta=(1,\ldots,1)\in (K_{\mathrm{num}}(\modfa)\otimes \R)^*\simeq \R^I$. 
Then $(\modfa, Z_{\zeta^\circ})$ gives a stability condition on $\dmodfa$ in the sense of Bridgeland (\cite{bridgeland-stability}).

For a pair of real numbers $t_1>t_2$, let 
$\D{t_1}{t_2}$ be the full subcategory of $\dmodfa$ consisting of elements whose Harder-Narasimhan factors have phases less or equal to $t_1\pi$ and larger than $t_2\pi$. 
The following claims are standard  (see \cite{bridgeland-stability}):
\begin{lem}
\textup{(1)} $\D{t_1+1}{t_2+1}=\D{t_1}{t_2}[1]$ where $[1]$ represents the shift in the derived category.

\noindent\textup{(2)} $\D{t}{t-1}$ is a core of a t-structure for any $t$.

\noindent\textup{(3)} $\D{1}{0}=\modfa$.

\noindent\textup{(4)} For $t>s>t-1$, the pair of subcategories 
\[
\Bigl(\D{t}{s}, \D{s}{t-1}\Bigr)
\]
gives a {\it torsion pair} (\cite[Definition 2.4]{bridgeland_t_str}) for the Abelian category $\D{t}{t-1}$.
 
\noindent\textup{(5)} For $t>s>t-1$, $\D{s}{s-1}$ is obtained from $\D{t}{t-1}$ by tilting with respect to the torsion pair above (\cite{HRS_tilting}, \cite[Proposition 2.5]{bridgeland_t_str}), i.e. 
\begin{align*}
&\D{s}{s-1}=\\
&\Bigl\{
E\in\dmodfa\,\Big|\, H^0_{\D{t}{t-1}}(E)\in \D{s}{t-1},\ 
H^{1}_{\D{t}{t-1}}(E)\in \D{t}{s}\Bigr\},\\
&\D{t}{t-1}=\\
&\Bigl\{
E\in\dmodfa\,\Big|\, H^{0}_{\D{s}{s-1}}(E)\in \D{s}{t-1},\ 
H^{-1}_{\D{s}{s-1}}(E)\in \D{t-1}{s-1}
\Bigr\},
\end{align*}
where $H^*_{\D{t}{t-1}}(-)$ represents the cohomology with respect to the t-structure corresponding to $\D{t}{t-1}$. 
\end{lem}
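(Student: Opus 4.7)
The plan is to deduce all five statements from the standard fact that the assignment $\zeta^\circ \mapsto (\modfa, Z_{\zeta^\circ})$ does define a Bridgeland stability condition on $\dmodfa$, which has already been asserted in the paragraph preceding the lemma. Once this is granted, the subcategories $\D{t_1}{t_2}$ are just the full subcategories generated by the semistable objects whose phases lie in the half-open interval $(t_2\pi, t_1\pi]$, i.e.\ the standard slicing subcategories associated to $Z_{\zeta^\circ}$. So my task reduces to checking that the subcategories in the lemma coincide with these slicing subcategories and then quoting Bridgeland's framework.

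First I would verify (3), since it is the only statement that uses the particular form of $Z_{\zeta^\circ}$. For a nonzero object $V \in \modfa$ the dimension vector $\dimv V$ has non-negative integer entries and is nonzero, so $\eta\cdot\dimv V > 0$. Hence $Z_{\zeta^\circ}(V)$ lies strictly in the open upper half plane, which by definition means the phase of any semistable module lies in $(0,1]$. In particular every module, by considering its Harder--Narasimhan filtration, lies in $\D{1}{0}$, giving $\modfa \subseteq \D{1}{0}$. Conversely, a Harder--Narasimhan factor with phase in $(0,1]$ is itself a module (phase in the open upper half plane is equivalent to $\mr{Im}\, Z > 0$ which forces the object to be an honest module with positive dimension vector), so $\D{1}{0} \subseteq \modfa$.

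Statement (1) is immediate from $Z_{\zeta^\circ}(E[1]) = -Z_{\zeta^\circ}(E)$, which shifts phases by exactly $1$; therefore the HN factors of $E[1]$ are precisely the shifts of the HN factors of $E$, with phases incremented by $1$. Statement (2) is the content of \cite[Prop.\ 5.3]{bridgeland-stability}: for any slicing coming from a stability condition, the subcategory of objects with phases in a half-open interval $(t-1, t]$ of length $1$ is the heart of a bounded t-structure, because the slicing axioms provide the truncation functors via the Harder--Narasimhan filtration. Combined with (1) and (3), this specializes to $\D{t}{t-1}$ for every real $t$.

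Statement (4) is then formal: for $t > s > t-1$, every object $E$ of the heart $\D{t}{t-1}$ decomposes by its HN filtration into a subobject with phases in $(s\pi, t\pi]$ and a quotient with phases in $((t-1)\pi, s\pi]$; the subobject lies in $\D{t}{s}$, the quotient in $\D{s}{t-1}$, and $\mr{Hom}$ between them vanishes because the phases are strictly separated. This is exactly the data of a torsion pair on $\D{t}{t-1}$. For (5), I would invoke the Happel--Reiten--Smalø tilting construction: given the torsion pair $(\D{t}{s}, \D{s}{t-1})$ on the heart $\D{t}{t-1}$, its tilt is the heart
\[
\Bigl\{E \,\Big|\, H^0_{\D{t}{t-1}}(E) \in \D{s}{t-1},\ H^1_{\D{t}{t-1}}(E) \in \D{t}{s}\Bigr\},
\]
and on the other hand this tilt is known (\cite[Prop.\ 2.5]{bridgeland_t_str}) to coincide with the slicing heart $\D{s}{s-1}$ associated with the shifted interval. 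The second displayed description of $\D{t}{t-1}$ inside $\D{s}{s-1}$ is then obtained symmetrically by tilting back along the reversed torsion pair.

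The only step with any content is (3); the remaining four are formal consequences of Bridgeland's general theory once the stability condition is in place, so I do not anticipate a serious obstacle beyond carefully matching the half-open interval conventions used in the definition of $\D{t_1}{t_2}$ with those of \cite{bridgeland-stability} and \cite{bridgeland_t_str}.
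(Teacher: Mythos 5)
The paper offers no proof of this lemma, simply declaring the claims ``standard'' with a citation to Bridgeland's stability paper; your argument is precisely the standard one that citation points to (the central charge takes $\modfa$ to the upper half plane, the slicing subcategories $\mathcal{P}((t_2,t_1])$ give the $\D{t_1}{t_2}$, hearts on length-one intervals, phase-separated HN filtrations as torsion pairs, and Happel--Reiten--Smal\o{} tilting). Your proposal is correct and takes essentially the same approach as the paper.
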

\begin{lem}\label{lem_noether}
The algebra $A_\sigma$ is \textup{(}left-\textup{)}Noetherian.
\end{lem}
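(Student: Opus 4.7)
The plan is to show that $A_\sigma$ is module-finite over its center and deduce left-Noetherianness from Hilbert's basis theorem.

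First I would recall (from Theorem \ref{thm_perv} and the references \cite{3tcy,vandenbergh-3d}) that $A_\sigma$ is a non-commutative crepant resolution of $X$ in Van den Bergh's sense. In particular, there exists a reflexive $R$-module $M$ (where $R$ is the coordinate ring of $X$, i.e.\ the center of $A_\sigma$) such that
\[
A_\sigma \simeq \End_R(M),
\]
and this identification is compatible with the description of $A_\sigma$ as a path algebra modulo the Jacobian relations of $(Q_\sigma, w_\sigma)$. Concretely, $M = \bigoplus_{i \in I} M_i$ where $M_i$ corresponds to the projective $P_i = A_\sigma e_i$.

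The second step is to observe that $R = \C[x_1, x_2, x_3, x_4]/(x_1 x_2 - x_3^{L^+} x_4^{L^-})$ is a finitely generated commutative $\C$-algebra, hence Noetherian by Hilbert's basis theorem. Since $M$ is a finitely generated $R$-module (it is a direct sum of finitely many reflexive modules of finite rank over the normal Noetherian domain $R$), the endomorphism ring $\End_R(M)$ embeds as an $R$-submodule of $M^{\oplus n}$ for some $n$ (via evaluation at a set of generators). Because $R$ is Noetherian, any $R$-submodule of a finitely generated $R$-module is finitely generated, so $A_\sigma = \End_R(M)$ is finitely generated as an $R$-module.

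Finally, since $A_\sigma$ is a ring that is finitely generated as a module over the Noetherian commutative ring $R$ (and $R$ sits in the center of $A_\sigma$), the standard argument shows $A_\sigma$ is (both left and right) Noetherian: any ascending chain of left ideals in $A_\sigma$ is in particular an ascending chain of $R$-submodules of the finitely generated $R$-module $A_\sigma$, which must stabilize.

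The main obstacle is really just bookkeeping: confirming that the construction of $A_\sigma$ from the quiver-with-potential in \cite{3tcy} matches Van den Bergh's $\End_R(M)$ description (so that the module-finiteness over $R$ can be invoked). Once that identification is in place — and it is, via \cite{vandenbergh-3d} as noted in the footnote to Theorem \ref{thm_perv} — the Noetherian property is immediate.
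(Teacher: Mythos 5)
Your proof is correct and follows essentially the same route as the paper: both arguments reduce the claim to showing that $A_\sigma$ is finitely generated as a module over the Noetherian commutative ring $R=\C[X]$, which lies in its center, and then conclude by the standard ascending-chain argument. The only cosmetic difference is how module-finiteness over $R$ is justified — the paper invokes $A_\sigma\simeq f_*\mathcal{E}nd\,V$ and properness of the contraction $f\colon Y_\sigma\to X$, while you use the Van den Bergh description $A_\sigma\simeq \End_R(M)$ and embed $\End_R(M)$ into a finite direct sum of copies of $M$; both are valid.
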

\begin{proof}
In \cite{3tcy}, it is shown that $A_\sigma$ is isomorphic to $f_*\mathrm{End}V$ for a vector bundle $V$ on $Y_\sigma$, where $f$ is the contraction $Y_\sigma\to X$. Since $f$ is proper, $A_\sigma$ is finitely generated as an $\mathcal{O}_X$-module. Hence $A_\sigma$ is Noetherian. 
\end{proof}
\begin{prop}\label{prop_torsion_pair}
For $0<t<1$ we put
\[
\D{1}{t}^\bot:=\{E\in \moda\mid \mathrm{Hom}_{A_\sigma}(F,E)=0,\forall F\in \D{1}{t}\}.
\]
Then the pair of full subcategory $(\D{1}{t},\D{1}{t}^\bot)$ gives a torsion pair in $\moda$.
\end{prop}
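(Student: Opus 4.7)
The plan is to verify the two defining properties of a torsion pair: the $\Hom$-vanishing, which holds tautologically from the definition of $\D{1}{t}^\bot$, and the existence, for each $E \in \moda$, of a short exact sequence $0 \to T \to E \to F \to 0$ with $T \in \D{1}{t}$ and $F \in \D{1}{t}^\bot$. The Hom-vanishing is immediate, so all the work is in producing the torsion subobject $T$. The two inputs I will use are (a) part (4) of the preceding lemma, applied with $(t_1,t_2)=(1,t)$, which says $(\D{1}{t},\D{t}{0})$ is a torsion pair inside $\D{1}{0}=\modfa$, and in particular $\D{1}{t}$ is closed under quotients and extensions inside $\modfa$; and (b) Lemma \ref{lem_noether}, the Noetherianity of $A_\sigma$.

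Given $E\in\moda$, I would set $T$ to be the sum, inside $E$, of all images of morphisms $\phi\colon F\to E$ with $F\in\D{1}{t}$. Each such image is a quotient of $F$, hence lies in $\D{1}{t}$ by closure under quotients. For any two such submodules $T_1,T_2\subset E$, their sum $T_1+T_2$ is the image of $T_1\oplus T_2\in\D{1}{t}$, hence again lies in $\D{1}{t}$. Thus the family of submodules of $E$ lying in $\D{1}{t}$ is directed under inclusion, and $T$ is their directed union.

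The main step is to show $T\in\D{1}{t}$, i.e.\ that this possibly infinite union is realized by a single member. Here I invoke Lemma \ref{lem_noether}: since $A_\sigma$ is left-Noetherian and $E$ is finitely generated, the submodule $T$ is finitely generated. Picking generators $x_1,\ldots,x_n$ of $T$, each $x_i$ lies in some $T_{\alpha_i}\in\D{1}{t}$; by directedness, there is a single $T_\alpha\in\D{1}{t}$ containing all of them, so $T=T_\alpha\in\D{1}{t}$, and in particular $T$ is finite-dimensional. To finish, I verify $E/T\in\D{1}{t}^\bot$: given $\phi\colon F\to E/T$ with $F\in\D{1}{t}$, the image $I:=\im\phi\in\D{1}{t}$ (quotient of $F$), and its preimage $\widetilde I\subset E$ fits in a short exact sequence $0\to T\to\widetilde I\to I\to 0$ of finite-dimensional modules, so $\widetilde I\in\D{1}{t}$ by closure under extensions. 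Maximality of $T$ forces $\widetilde I\subset T$, hence $I=0$ and $\phi=0$.

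The only non-formal point is the step showing $T\in\D{1}{t}$; without Noetherianity the directed union of an infinite family of finite-dimensional subobjects could fail to be finite-dimensional, and the argument would collapse. Every other ingredient is either the torsion-pair closure properties inherited from part (4) of the previous lemma or straightforward diagram chasing in $\moda$.
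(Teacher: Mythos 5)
Your proof is correct, and it reaches the same torsion subobject as the paper by a slightly different route. The paper first uses Lemma \ref{lem_noether} to extract the maximal finite-dimensional submodule $F^{(1)}\subset F$, then applies the torsion pair $(\D{1}{t},\D{t}{0})$ of $\modfa$ (part (4) of the preceding lemma) to $F^{(1)}$ to obtain $F^{(3)}\in\D{1}{t}$, and finally checks $\Hom_{A_\sigma}(X,F/F^{(3)})=0$ for $X\in\D{1}{t}$ by exhibiting $F/F^{(3)}$ as an extension of the quotient $F^{(2)}=F/F^{(1)}$ (which receives no maps from any finite-dimensional module) by $F^{(4)}=F^{(1)}/F^{(3)}\in\D{t}{0}$. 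You instead build the torsion subobject in one step as the trace of $\D{1}{t}$, using Noetherianity only to see that the directed union of subobjects in $\D{1}{t}$ stabilizes, and then get torsion-freeness of the quotient from maximality plus closure of $\D{1}{t}$ under extensions; this is the standard ``torsion radical'' argument and is entirely self-contained. The two constructions agree (any submodule of $F$ in $\D{1}{t}$ is finite-dimensional, hence lies in $F^{(1)}$, and maps to zero into the torsion-free part $F^{(4)}$, so it lies in $F^{(3)}$), so the difference is one of packaging. What the paper's version buys is the explicit intermediate decomposition through a quotient killed by \emph{all} finite-dimensional modules, which it reuses verbatim later (in the lemma preceding Proposition \ref{prop_47}); what yours buys is brevity and independence from the two-step reduction. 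You correctly identify Noetherianity as the one non-formal ingredient.
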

\begin{proof}
We will prove that every object $F\in \moda$ fits into a short exact sequence
\[
0\to E\to F\to G\to 0
\]
for some pair of objects $E\in \D{1}{t}$ and $G\in \D{1}{t}^\bot$. 

By Lemma \ref{lem_noether}, $F$ has the maximal finite dimensional submodule $F^{(1)}$. Let $F^{(2)}$ denote the cokernel of the inclusion $F^{(1)}\hookrightarrow F$. 
Note that $\mathrm{Hom}_{A_\sigma}(X,F^{(2)})=0$ for any finite dimensional $A_\sigma$-module $X$.

Let 
\[
0\to F^{(3)}\to F^{(1)}\to F^{(4)}\to 0
\]
be the exact sequence such that $F^{(3)}\in \D{1}{t}$ and $F^{(4)}\in \D{t}{0}$.Note that for any $X\in \D{1}{t}$ we have $\mathrm{Hom}_{A_\sigma}(X,F^{(4)})=0$. 

Let $F^{(5)}$ denote the cokernel of the inclusion $F^{(3)}\hookrightarrow F$.
Then we have the following exact sequence:
\[
0\to F^{(4)}\to F^{(5)}\to F^{(2)}\to 0.
\]
This implies $\mathrm{Hom}_{A_\sigma}(X,F^{(5)})=0$ for any $X\in \D{1}{t}$. 
Put $E:=F^{(3)}$ and $G:=F^{(5)}$, then the claim follows.
\end{proof}
\begin{NB}
\begin{lem}
The natural inclusion $\modfa\subset \moda$ has a right adjoint. 
\end{lem}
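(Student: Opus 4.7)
The plan is to construct an explicit right adjoint $R\colon \moda \to \modfa$ by sending a finitely generated $A_\sigma$-module $F$ to its largest finite-dimensional submodule, namely
\[
R(F) := \sum \bigl\{\, M \subset F \,\big|\, M \text{ is a finite-dimensional submodule}\,\bigr\},
\]
and then verifying that $R(F)$ really does lie in $\modfa$ and that $R$ is right adjoint to the inclusion.

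First I would show that $R(F)$ is finite-dimensional. This is where Lemma \ref{lem_noether} is essential: since $A_\sigma$ is left Noetherian and $F$ is finitely generated, every submodule of $F$ is finitely generated, and in particular $R(F)$ is finitely generated, say by elements $x_1,\ldots,x_n$. By construction each $x_i$ lies in a finite sum of the summands defining $R(F)$, and a finite sum of finite-dimensional submodules is finite-dimensional. Hence each $x_i$ belongs to some single finite-dimensional submodule $M_i \subset R(F)$, and then $R(F) = M_1 + \cdots + M_n$ is itself finite-dimensional.

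Next I would verify the adjunction $\Hom_{\modfa}(X, R(F)) \cong \Hom_{\moda}(X, F)$ for $X \in \modfa$ and $F \in \moda$. Post-composition with the inclusion $R(F) \hookrightarrow F$ gives a natural map from left to right, which is injective because the inclusion is a monomorphism. For surjectivity, given any $\phi\colon X \to F$ with $X$ finite-dimensional, the image $\phi(X)$ is a finite-dimensional submodule of $F$ and is therefore contained in $R(F)$; so $\phi$ factors uniquely through $R(F)$. Naturality in $X$ and $F$ is immediate.

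The only point that really uses the structure of $A_\sigma$ is the finite-dimensionality of $R(F)$, which genuinely requires Noetherianity: the analogous statement fails for non-Noetherian rings (for instance $\mathbb{Q}/\mathbb{Z}$ as a $\mathbb{Z}$-module is a union of finite subgroups but is not itself finite). Everything else is formal, so this is where the main content of the lemma sits.
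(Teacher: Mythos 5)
Your proof is correct and follows exactly the paper's (very terse) argument: the right adjoint is $F\mapsto F^{\mathrm{fin}}$, the maximal finite-dimensional submodule, whose existence rests on Lemma \ref{lem_noether} (Noetherianity of $A_\sigma$) exactly as you spell out. The paper states this in one sentence; you have simply supplied the routine verifications.
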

\begin{proof}
Since $A_\sigma$ is Noetherian, an element $E\in \moda$ has the maximal finite dimensional submodule $E^{\mathrm{fin}}$. 
The functor $E\mapsto E^{\mathrm{fin}}$ gives a right adjoint of the inclusion.
\end{proof}
A right admissible subcategory of $A$ is a full subcategory $B\subset A$ such that the inclusion functor has a right adjoint (\cite[Definition 2.1]{bridgeland-flop}).
\begin{cor}
For $0<t<1$, $\D{1}{t}$ is a right admissible full subcategory of $\dmoda$.
\end{cor}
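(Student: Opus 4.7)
The plan is to verify the two defining properties of a torsion pair: the vanishing $\Hom_{A_\sigma}(T,F)=0$ for $T\in\D{1}{t}$ and $F\in\D{1}{t}^\bot$, which is immediate from the definition of $\D{1}{t}^\bot$, and the existence, for every $F\in\moda$, of a short exact sequence $0\to E\to F\to G\to 0$ with $E\in\D{1}{t}$ and $G\in\D{1}{t}^\bot$. Only the latter requires real work, and I would produce $E$ and $G$ in two stages: first reduce to the finite-dimensional part of $F$, then apply the slicing torsion pair on $\modfa$.

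First I would observe that $\D{1}{t}\subset\D{1}{0}=\modfa$, so every object of $\D{1}{t}$ is finite-dimensional. By Lemma \ref{lem_noether} the module $F\in\moda$ admits a unique maximal finite-dimensional submodule $F^{\mathrm{fin}}$, and by maximality the quotient $F/F^{\mathrm{fin}}$ contains no nonzero finite-dimensional submodule; in particular $\Hom_{A_\sigma}(X,F/F^{\mathrm{fin}})=0$ for every $X\in\D{1}{t}$.

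Next, the slicing of the stability condition gives the standard torsion pair $(\D{1}{t},\D{t}{0})$ on $\D{1}{0}=\modfa$. Applying it to $F^{\mathrm{fin}}$ produces $0\to E\to F^{\mathrm{fin}}\to F''\to 0$ with $E\in\D{1}{t}$ and $F''\in\D{t}{0}$. Setting $G:=F/E$ yields the candidate sequence, and $G$ itself fits into a short exact sequence $0\to F''\to G\to F/F^{\mathrm{fin}}\to 0$. To conclude $G\in\D{1}{t}^\bot$, I would apply $\Hom_{A_\sigma}(X,-)$ for $X\in\D{1}{t}$ to this sequence: the Hom to $F/F^{\mathrm{fin}}$ vanishes by the previous paragraph, and the Hom to $F''$ vanishes by the standard incompatibility of phases in a Bridgeland slicing (no nonzero morphism goes from phase in $(t\pi,\pi]$ to phase in $(0,t\pi]$).

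The one place to be careful is the Noetherian step: without Lemma \ref{lem_noether} there is no reason for a maximal finite-dimensional submodule to exist inside an arbitrary $F\in\moda$, and the whole construction would collapse. Once $F^{\mathrm{fin}}$ is available the rest is formal — a diagram chase with two short exact sequences plus the Hom-orthogonality built into the slicing — so I anticipate the extraction of $F^{\mathrm{fin}}$ to be the only substantive point, and the verification that the resulting $G$ lies in $\D{1}{t}^\bot$ to reduce to bookkeeping.
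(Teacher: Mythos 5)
Your argument is correct and is essentially the paper's own: the two-stage decomposition --- first extract the maximal finite-dimensional submodule $F^{\mathrm{fin}}$ (which exists by Lemma \ref{lem_noether}), then apply the slicing torsion pair $(\D{1}{t},\D{t}{0})$ on $\modfa$ and chase the two short exact sequences --- is exactly the chain $F^{(1)},\dots,F^{(5)}$ in the paper's proof of Proposition \ref{prop_torsion_pair}. The only step you leave implicit is the final translation from the torsion pair to admissibility: the assignment $F\mapsto E$ is functorial and is the right adjoint to the inclusion $\D{1}{t}\hookrightarrow\moda$, which is what ``right admissible'' asks for (note that, as in the paper, this produces the adjoint only at the level of the abelian category $\moda$, not literally of the derived category $\dmoda$ as the statement is phrased).
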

\end{NB}
\begin{defn}
For $0<t<1$ let $\Dinf{t}{t-1}$ denote the core of the t-structure given from $\moda$ by tilting with respect to the torsion pair in Proposition \ref{prop_torsion_pair}, i.e.
\[
\Dinf{t}{t-1}=
\Bigl\{
E\in\dmoda\,\Big|\, H^{0}_{\moda}(E)\in \Dinf{t}{0},\ 
H^{1}_{\moda}(E)\in \Dinf{1}{t}
\Bigr\}.
\]
\end{defn}
\begin{NB}
The following lemma is also easy to verify:
\begin{lem}
Let $E_1\to E_2\to E_3\to E_1[1]$ be a distinguished triangle in $\dmodfa$ and $E_2\in \D{s}{t-1}$ for $t>s>t-1$. Then, the triangle gives an exact sequence in $\D{t}{t-1}$ if and only if it gives an exact sequence in $\D{s}{s-1}$.
In particular, in such a case we have $E_1,E_3\in \D{s}{t-1}$.
\end{lem}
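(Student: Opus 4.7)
The plan is to observe that $\D{s}{t-1} = \D{t}{t-1} \cap \D{s}{s-1}$ and to reduce the biconditional to the single claim that $E_1, E_3 \in \D{s}{t-1}$. Once this is established, all three objects lie in both hearts and the distinguished triangle yields a short exact sequence in each, proving the equivalence and the ``in particular'' clause simultaneously.

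For the $(\Rightarrow)$ direction, assume $E_1, E_2, E_3 \in \D{t}{t-1}$. The preceding lemma identifies $(\D{t}{s}, \D{s}{t-1})$ as a torsion pair on $\D{t}{t-1}$, in which $\D{s}{t-1}$ is the torsion-free class and hence closed under subobjects. Since $E_1 \subset E_2$ and $E_2 \in \D{s}{t-1}$, this yields $E_1 \in \D{s}{t-1}$ at once. For the quotient $E_3$, I would examine its torsion decomposition $0 \to T(E_3) \to E_3 \to F(E_3) \to 0$ with $T(E_3) \in \D{t}{s}$, and attempt to show $T(E_3) = 0$ by applying $\Hom(T(E_3), -)$ to the short exact sequence $0 \to E_1 \to E_2 \to E_3 \to 0$. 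The torsion-pair vanishing $\Hom(T(E_3), E_2) = 0$ forces the inclusion $T(E_3) \hookrightarrow E_3$ to lift to a class in $\Ext^1(T(E_3), E_1)$, which must then be shown to vanish.

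The $(\Leftarrow)$ direction is handled by a dual argument inside the tilted heart $\D{s}{s-1}$. Here the relevant torsion pair is $(\D{s}{t-1}, \D{t-1}{s-1})$ with $\D{s}{t-1}$ now acting as the torsion class, which is closed under quotients and extensions. The quotient $E_3$ of the torsion object $E_2$ is therefore torsion, giving $E_3 \in \D{s}{t-1}$ for free; the subobject $E_1$ is pinned down by a symmetric $\Ext^1$ consideration arising from the long exact sequence of cohomology of the distinguished triangle.

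The main obstacle in both directions is the final $\Ext^1$-vanishing that rules out the offending extension class. The natural route should pass through the tilt characterization $\D{s}{s-1} = \{E : H^0_{\D{t}{t-1}}(E) \in \D{s}{t-1},\ H^1_{\D{t}{t-1}}(E) \in \D{t}{s}\}$ from the previous lemma, combined with the long exact sequence of $\D{t}{t-1}$-cohomology of the triangle $E_1 \to E_2 \to E_3 \to E_1[1]$. One expects the hypothesis $E_2 \in \D{s}{t-1}$ together with either exact-sequence condition to force the cohomology objects of $E_1$ and $E_3$ to sit in the appropriate pieces of the torsion pair, which would close the argument.
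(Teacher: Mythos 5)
Your reduction to showing $E_1,E_3\in \D{s}{t-1}=\D{t}{t-1}\cap\D{s}{s-1}$ is the right frame, and the two halves you actually settle are correct: in the forward direction $E_1\in\D{s}{t-1}$ because the torsion-free class is closed under subobjects, and in the backward direction $E_3\in\D{s}{t-1}$ because inside $\D{s}{s-1}$ that subcategory is the torsion class, hence closed under quotients. But the two steps you defer --- the vanishing of the image of $T(E_3)\hookrightarrow E_3$ in $\Ext^1(T(E_3),E_1)$, and its mirror for $E_1$ in the converse --- are not technicalities that the tilt description will absorb: the class in question is precisely (a pullback of) the extension class defining $E_2$, and nothing in the hypotheses kills it. In fact the statement is false as written. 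Take $L_+=L_-=1$, $\zeta^\circ=(-1,1)$, $t=1$ (so $\D{1}{0}=\modfa$) and $s=3/5$; then $S_0$ and $S_1$ are semistable of phases $1/4$ and $3/4$. Let $E_2$ be the $A_\sigma$-module of dimension vector $(1,1)$ on which exactly one arrow from vertex $1$ to vertex $0$ acts by the identity and all other arrows act by zero (the potential relations hold since every cyclic derivative is a sum of paths each traversing a zero arrow). Its only proper nonzero submodule is $S_0$, so $E_2$ is stable of phase $1/2$ and lies in $\D{3/5}{0}=\D{s}{t-1}$. The nonsplit sequence $0\to S_0\to E_2\to S_1\to 0$ is exact in $\D{1}{0}$, yet $S_1$ is semistable of phase $3/4>s$, so $S_1\notin\D{s}{s-1}$ and the triangle gives no exact sequence there: the forward implication and the ``in particular'' clause both fail, and your own diagram chase, run on this example, produces exactly the nonzero class in $\Ext^1(S_1,S_0)$ you were hoping to kill. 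Rotating the same triangle to $S_1[-1]\to S_0\to E_2\to S_1$, whose middle term $S_0$ again lies in $\D{s}{t-1}$, gives an exact sequence in $\D{s}{s-1}$ with outer term $S_1[-1]\notin\D{1}{0}$, so the converse fails as well.

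So the gap is unfixable rather than unfinished. For what it is worth, the paper only asserts this lemma, without proof, inside a commented-out note, so there is no argument there to recover either; a correct statement along these lines needs the stronger hypothesis that $E_1$ and $E_3$ (not merely $E_2$) lie in $\D{s}{t-1}$, or some genericity assumption excluding semistable subquotients of phase in $(s\pi,t\pi]$, at which point the equivalence becomes the tautology you identified in your opening reduction.
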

I'm not sure that the following is true.
Moreover, have a torsion pair
\[
\Bigl(\Dinf{0}{t}, \Dinf{t}{1}\Bigr)
\]
for the Abelian category $\moda$ such that
\[
\Dinf{0}{t}\cap \modfa=\D{0}{t},\quad \Dinf{t}{1}\cap \modfa=\D{t}{1}.
\]
Let $\Dinf{t}{t+1}$ be the Abelian category given by tilting.
\end{NB}

We have the following bijection:
\begin{equation*}\label{eq_zeta_bij}
\begin{array}{ccc}
\{
(\zeta^\circ,T)\mid \zeta^\circ\cdot \delta=0,\ T\in\R
\}
&\overset{\sim}{\longrightarrow}
&\R^I\simeq (K_{\mathrm{num}}(\modfa)\otimes \R)^*\\
(\zeta^\circ,T)
&\longmapsto
&\zeta^\circ-T\eta.
\end{array}
\end{equation*}
The inverse map is given by
\[
T:=-\zeta\cdot\eta/L,\quad
\zeta^\circ:=\zeta+T\eta 
\]
for $\zeta\in(K_{\mathrm{num}}(\modfa)\otimes \R)^*$.
For a fixed $T$, take $0<t<1$ such that $\tan(t\pi)=1/T$. 
Note that for an element $V\in\modfa$ we have
\begin{equation}\label{eq_stability}
\phi_{{Z}_{\zc}}(V)<t\pi\iff \zeta([V])<0
\end{equation}
where $\phi_{{Z}_{\zc}}(V):=\mathrm{arg}({Z}_{\zc}([V]))$.
\begin{defn}\label{defn_catA}
\[
\catAzeta:=\D{t}{t-1},\quad \catAinfzeta:=\Dinf{t}{t-1}.
\]
\end{defn}
\begin{NB}
\begin{figure}[htbp]
  \centering
  \input{chambers.tpc}
  \caption{$\catAzeta$}
  \label{fig_chambers}
\end{figure}
\end{NB}
\begin{rem}
We have the natural action $\mathrm{rot}$ of $\R$ on the space of Bridgeland's stability conditions given by rotation of the complex line which is the target of the central charge. 
We can embed $(K_{\mathrm{num}}(\modfa)\otimes \R)^*$ into the space of Bridgeland's stability conditions by 
\[
\zeta\mapsto Z_\zeta:=\mathrm{rot}_t(Z_{\zeta^\circ}).
\]
Note that $\D{t}{t-1}$ agrees with $\mathcal{D}^{\zeta}_{\mathrm{fin}}[1,0)$. 
This is the reason why we call $\zeta$ a stability parameter, although we will use the former description since it is more convenient in our argument.
\end{rem}

\subsection{Chamber structure}\label{subsec_13}
A stability parameter $\zeta$ is said to be generic if there is no ${Z}_{\zeta^\circ}$-semistable objects with phase $t$. 
Then we get a chamber structure in $(K_{\mathrm{num}}(\modfa)\otimes \R)^*\simeq\R^I$. 
\begin{prop}
The chamber structure coincides with the affine root chamber structure of type $A_{L-1}$.
\end{prop}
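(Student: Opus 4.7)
A stability parameter $\zeta$ fails to be generic precisely when the hyperplane $\{\zeta'\mid \zeta'\cdot \mathbf{v}=0\}$ passes through $\zeta$ for some $\mathbf{v}$ realised as the class of a strictly $Z_{\zc}$-semistable object of phase $t\pi$. By \eqref{eq_stability}, such $\mathbf{v}$ may be taken to be the class of a simple object in the heart $\catAzeta$. Hence the chamber structure in $\R^I$ is determined by the set of $\mathbf{v}\in \Z^I$ that occur as classes of simples of $\catAzeta$ for some $\zeta$, and the goal is to show that this set coincides with the set of positive real roots of the affine root system of type $\hat{A}_{L-1}$.

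The plan is to start from a base chamber and propagate by wall-crossing. For $\zeta$ a small perturbation of $\zeta^\circ_{\mathrm{cyc}}$ (so that $t$ is slightly less than $1$), we have $\catAzeta=\modfa$ with simples $\{S_i\}_{i\in I}$, whose classes are the simple roots $\{\alpha_i\}_{i\in I}$ of $\hat{A}_{L-1}$; the walls bordering this chamber are thus $\alpha_i^\perp$, matching the fundamental alcove of the affine Weyl arrangement. When $\zeta$ crosses a wall $[S]^\perp$ for $S$ a simple of the current heart, part (5) of the tilting lemma in \S\ref{subsec_stability_and_tilting} shows that the new heart is obtained by tilting at $S$, so $S$ becomes $S[1]$ and each other simple $S'$ is replaced by a new simple of class $[S']+\dim\Ext^1(S',S)\cdot[S]$. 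I would then verify by direct computation on $(Q_\sigma,w_\sigma)$ that this $\Ext^1$-pairing agrees with minus the symmetrised Cartan matrix of $\hat{A}_{L-1}$, so wall-crossing acts on $\Z^I\simeq \bigoplus_i\Z\alpha_i$ by the simple reflection $s_{[S]}$. Since these reflections generate the affine Weyl group $\widehat{W}$, iterating the argument yields a bijection between the chambers in $\R^I$ and the Weyl chambers of $\hat{A}_{L-1}$ matching their wall hyperplanes; the imaginary walls $\{\zeta\cdot n\delta=0\}$ do not arise because we work on the locus $\zeta\cdot\delta=0$.

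The main obstacle is the $\Ext^1$-computation in the wall-crossing step: a priori the simples of $\catAzeta$ for generic $\zeta$ are not the $S_i$ but new modules over an iteratively mutated quiver with potential, and one must track how the loops $r_i$ at hexagon vertices of $Q_\sigma$ contribute to $\Ext^1$. A clean way to handle this is an induction along a gallery of walls starting from the base chamber, showing at each stage that the new heart is the module category of a quiver with potential whose underlying unoriented graph is still the cyclic $A_{L-1}^{(1)}$ Dynkin diagram, so the relevant $\Ext^1$-pairing is automatically given by the affine Cartan matrix.
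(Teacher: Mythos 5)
Your route is genuinely different from the paper's: the paper's entire proof is the observation that a $Z_{\zeta^\circ}$-semistable object has phase $t$ iff $\zeta([V])=0$, so that genericity here matches genericity in \cite{3tcy}, followed by a citation of \cite[Proposition 2.10, Corollary 2.12]{3tcy}. Your wall-crossing induction is a reasonable way to try to prove the statement from scratch, but it has a genuine gap at its central step. The formula you invoke for the simples of the tilted heart --- $S\mapsto S[1]$ and $S'\mapsto$ a universal extension of class $[S']+\dim\Ext^1(S',S)\cdot[S]$ --- is the standard simple-tilt formula, and its derivation requires the simple $S$ being tilted at to be rigid. In $A_\sigma$ every hexagon vertex $i\in I_H(\sigma)$ carries a loop $r_i$, and since the potential has no quadratic part this gives $\Ext^1(S_i,S_i)=\C$. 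Crossing the wall $W_{\alpha_i}$ at such a vertex therefore sweeps a torsion class containing all iterated self-extensions of $S_i$, the one-step universal extensions need not be the simples of the new heart, and the reflection formula for the classes of the new simples is no longer automatic. Your proposed repair --- that the mutated heart is modules over a quiver with potential on the cyclic $A^{(1)}_{L-1}$ graph, so the $\Ext^1$-pairing ``is automatically the affine Cartan matrix'' --- does not close this gap: the loops persist under the mutations $\sigma\mapsto\sigma\circ\theta_i$ (vertex $i$ is a hexagon vertex for $\sigma\circ\theta_i$ exactly when it is one for $\sigma$), so the diagonal of the $\Ext^1$-pairing never matches the Cartan diagonal, and it is precisely the diagonal that governs whether the simple-tilt formula applies. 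Showing that the tilted heart is again a length category with $L$ simples of classes $\alpha(\theta\circ\theta_i,j)=s_{\alpha(\theta,i)}\alpha(\theta,j)$ is the real content, and in the paper it rests on the explicit tilting generators of \cite[Proposition 3.1]{3tcy} (see Proposition \ref{prop_t_str}), not on a formal arrow count.

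Three smaller points. Your base chamber is misidentified: a small perturbation of $\zeta^\circ_{\mathrm{cyc}}$ lies in the chamber where the heart becomes $\cohcy$ (Propositions \ref{prop_44} and \ref{prop_47}), which is separated from the chamber with heart $\modfa$ (namely $C_{\mathrm{id}}$, where $\zeta\cdot\alpha<0$ for all $\alpha\in\Lambda^+$) by infinitely many walls, so the induction cannot start there. The paper works on $\{\zeta\mid\zeta\cdot\delta<0\}$, not on $\{\zeta\cdot\delta=0\}$; the imaginary walls are irrelevant because they all coincide with $W_\delta$, which bounds this region. Finally, your reduction of non-genericity to the vanishing of $\zeta$ on the class of a simple of the heart is correct but uses that $\catAzeta$ is a length category, which is again part of what the induction is supposed to deliver rather than an a priori fact.
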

\begin{proof}
A ${Z}_{\zeta^\circ}$-semistable object $V$ has the phase $t$ if and only if $\zeta([V])=0$ and so the genericity in this paper agrees with the one in \cite{3tcy}.
Then the claim follows from \cite[Proposition 2.10, Corollary 2.12]{3tcy}.
\end{proof}
Here we give a brief review for the affine root system of type $A_{L-1}$. 
We call $P:=K_{\mathrm{num}}(\modfa)$ the {\it root lattice} and $\alpha_i:=[S_i]\in P$ a {\it simple root}. 
For $h,h'\in\Zh$, we define $\alpha_{[h,h']}\in P$ by 
\[
\alpha_{[h,h']}:=\begin{cases}
0 & h=h', \\
\alpha_{\pi(h+1/2)}+\cdots+\alpha_{\pi(h'-1/2)} & h<h',\\
-\alpha_{\pi(h-1/2)}-\cdots-\alpha_{\pi(h'+1/2)} & h>h'
\end{cases}
\]
and $\delta:=\alpha_0+\cdots\alpha_{L-1}$. 
We set
\[
\Lambda :=\{\alpha_{[h,h']}\mid h\neq h'\},\quad
\Lambda^+  :=\{\alpha_{[h,h']}\mid h< h'\},\quad
\Lambda^-  :=\{\alpha_{[h,h']}\mid h> h'\}
\]
and
\[
\Lambda^{\mathrm{re}}  :=\{\alpha_{[h,h']}\mid h\not\equiv h'\,(\mathrm{mod}\,L)\},\quad
\Lambda^{\mathrm{im}}  :=\{m\delta\mid m\neq 0\}.
\]
An element in $\Lambda$ (resp. $\Lambda^{+}$, $\Lambda^{-}$, $\Lambda^{\mathrm{re}}$, $\Lambda^{\mathrm{im}}$) is called a {\it root} (resp. {\it positive} root, {\it negative} root, {\it real} root, {\it imaginary} root). 
Note that $\Lambda=\Lambda^+\sqcup \Lambda^-=\Lambda^{\mathrm{re}}\sqcup\Lambda^{\mathrm{im}}$. We put $\Lambda^{\mathrm{re},+}:=\Lambda^{\mathrm{re}}\cap\Lambda^+$ and define $\Lambda^{\mathrm{re},-}$, $\Lambda^{\mathrm{im},+}$ and $\Lambda^{\mathrm{im},-}$ in the same way. 

For a root $\alpha$, let $W_\alpha$ denote the hyperplane in $(K_{\mathrm{num}}(\modfa)\otimes \R)^*$ given by 
\[
W_\alpha:=\{\zeta\in(K_{\mathrm{num}}(\modfa)\otimes \R)^*\mid \zeta\cdot \alpha=0\}.
\]
The walls in the affine root chamber structure of type $A_{L-1}$ is given by
\[
W_\delta\cup\bigcup_{\alpha\in \Lambda^{\mathrm{re},+}}W_\alpha.
\]
Throughout this paper, we work on the area below the wall $W_\delta$, i.e. on the area $\{\zeta\mid \zeta\cdot \delta<0\}$.

\subsection{Parametrization of chambers}\label{subsec_14}
Let $\Theta$ denote the set of bijections $\theta\colon \Zh\to\Zh$ such that 
\begin{itemize}
\item $\theta(h+L)=\theta(h)+L$ for any $h\in\Zh$, and
\item $\theta(1/2)+\cdots+\theta(L-1/2)=1/2+\cdots +(L-1/2)=L^2/2$.
\end{itemize}
We have a natural bijection between $\Theta$ and the set of chambers in the area $\{\zeta\mid \zeta\cdot \delta<0\}$.
An element $\zeta_\theta$ in the chamber $C_\theta$ corresponding to $\theta\in\Theta$ satisfies the following condition:
\[
\alpha_{[h,h']}\cdot \zeta_\theta<0
\iff
\theta(h)<\theta(h')
\]
for any $h<h'$.
For $\theta\in \Theta$ and $i\in I$, we define $\alpha(\theta,i)\in P$ by 
\[
\alpha(\theta,i):=\alpha_{[\theta(n-1/2),\theta(n+1/2)]}\quad (\pi(n)=i).
\] 
Then the chamber $C_\theta$ is adjacent to the walls $W_{\alpha(\theta,i)}$ and we have $\zeta_\theta\cdot \alpha(\theta,i)<0$ for $\zeta_\theta\in C_\theta$.

Let $\theta_i\colon \Zh\to\Zh$ be the bijection given by
\[
\theta_i(h)=
\begin{cases}
h+1 & \pi(h+1/2)=i,\\
h-1 & \pi(h-1/2)=i,\\
h & \text{otherwise}.
\end{cases}
\]
Then we have $\alpha(\theta,i)=-\alpha(\theta\circ\theta_i,i)$ and the chambers $C_{\theta}$ and $C_{\theta\circ\theta_i}$ are separated by the wall $W_{\alpha(\theta,i)}=W_{\alpha(\theta\circ\theta_i,i)}$.

\subsection{Mutation}\label{subsec_15}
Assume that $T<0$ and
$\zeta^\circ$ is such that $(\zeta^\circ,T')$ is not on an intersection of two walls for any $T'\in\R$.
Let $\{T_r\}$ ($T_1<T_2<\cdots<0$) be the set of all the parameters $T_r<0$ such that $(\zeta^\circ,T_r)$ is not generic.
According to the argument at the end of the previous subsection, we have the  sequence $\{i_r\}$ of elements in $I$ such that  $(\zeta^\circ,T_r)$ for any $r$ is on the wall $W_{\alpha_r}$ for 
\[
\alpha_r:=\alpha(\theta_{i_{1}}\circ\cdots\circ\theta_{i_{r-1}},i_r).
\]
Take the minimal positive integer $R$ such that $T<T_R$ and put $A_\sigma^\zeta:=A_{\sigma\circ\theta_{i_1}\circ\cdots\circ\theta_{i_{R-1}}}$. 
Using this notation we have the following equivalencies of the Abelian categories:
\begin{prop}\label{prop_t_str}
\[
\catAzeta\simeq\mathrm{mod}_{\mathrm{fin}}A_\sigma^\zeta,\quad
\catAinfzeta\simeq\mathrm{mod}A_\sigma^\zeta.
\]
\end{prop}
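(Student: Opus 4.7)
The plan is to prove the proposition by induction on $R$, the number of wall-crossings separating the chamber $C_\theta$ containing $\zeta$ from the ``initial'' chamber. The base case is $R=1$, where no wall lies between $T$ and the most negative wall $T_1$; in that situation the composition $\theta_{i_1}\circ\cdots\circ\theta_{i_{R-1}}$ is empty, so $A_\sigma^\zeta=A_\sigma$. Here I would verify by hand that $\catAinfzeta\simeq\moda$ and $\catAzeta\simeq\modfa$. The characterization of the initial chamber is precisely that $\zeta\cdot\alpha_i<0$ for every simple root $\alpha_i=[S_i]$, so by \eqref{eq_stability} every simple module has phase strictly less than $t\pi$. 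This, together with the fact that $\modfa=\D{1}{0}$ by Lemma 1.x(3), forces $\modfa\subseteq \D{t}{t-1}$; since both sides are hearts on the same triangulated category and have the same numerical class group, they coincide. Passing to $\catAinfzeta$ proceeds along the same lines via the torsion pair of Proposition~\ref{prop_torsion_pair}.

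For the inductive step, suppose the proposition is known for parameters that cross $R-1$ walls, and pick $\zeta'$ with the same $\zeta^\circ$ but $T'$ in the chamber immediately above the wall $W_{\alpha_{R-1}}$. By hypothesis $\mathcal{A}^{\zeta'}\simeq\mr{mod}\,A_\sigma^{\zeta'}$, where $A_\sigma^{\zeta'}=A_{\sigma\circ\theta_{i_1}\circ\cdots\circ\theta_{i_{R-2}}}$. As $T$ passes from $T'$ to the current value, by Lemma 1.x(5) the heart $\catAinfzeta$ is obtained from $\mathcal{A}^{\zeta'}$ by a Happel--Reiten--Smal\o{} tilt with respect to the torsion pair whose torsion part consists of objects of phase exactly $t_{R-1}\pi$. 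The genericity assumption on $\zeta^\circ$ guarantees that this torsion class is generated by a single simple $A_\sigma^{\zeta'}$-module $S$ of class $\alpha_{R-1}$, so the wall-crossing is a simple tilt at $S$.

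The crux of the argument is then to identify this simple tilt with the combinatorial mutation $\sigma\circ\theta_{i_1}\circ\cdots\circ\theta_{i_{R-2}}\mapsto \sigma\circ\theta_{i_1}\circ\cdots\circ\theta_{i_{R-1}}$ of the quiver with potential, which by definition of $A_\sigma^\zeta$ is exactly the step producing the new algebra. For this identification I would appeal to the chamber analysis of \cite[Prop.\ 2.10, Cor.\ 2.12]{3tcy}: the real root $\alpha(\theta_{i_1}\cdots\theta_{i_{R-2}},i_{R-1})$ is identified with the class of the simple $S$ that gets ``flipped'', and the description of the mutated quiver shows that the new simples (the tilted $S[1]$ together with the remaining simples modified at adjacent vertices) are precisely the simples of $A_\sigma^\zeta$. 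Once this matching is established at the level of simples and their extensions, the derived equivalence of Theorem~\ref{thm_perv} applied to both $A_\sigma^{\zeta'}$ and $A_\sigma^{\zeta}$ promotes it to an equivalence of abelian categories $\catAinfzeta\simeq \mr{mod}\,A_\sigma^\zeta$, and restricting to the finite-dimensional heart yields $\catAzeta\simeq\mr{mod}_{\mr{fin}}A_\sigma^\zeta$.

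The hardest step is the last one: verifying that the abstract HRS-tilt at the simple $S$ corresponds bijectively to the combinatorially defined mutation $\theta_{i_{R-1}}$ of $\sigma$, not just up to derived equivalence but as an equivalence of hearts. This requires unraveling the definition of $\alpha(\theta,i)$ in \S\ref{subsec_14} and checking that the indecomposable stable object of phase $t_{R-1}\pi$ really is the simple at the vertex $i_{R-1}$ of the intermediate algebra $A_\sigma^{\zeta'}$; the Noetherian property (Lemma~\ref{lem_noether}) is needed to control the non-finite-dimensional version, and the consistency/CY$_3$ properties of the brane tiling underlying $A_\sigma$ are used implicitly through Theorem~\ref{thm_perv} to ensure the tilted heart is again the module category of a quiver with potential of the same form.
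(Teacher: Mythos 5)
Your proposal matches the paper's own proof in both strategy and substance: the paper also argues by induction on the number of walls crossed, realizing each wall-crossing as a Happel--Reiten--Smal\o{} tilt at the torsion pair generated by the simple module of class $\alpha_r$ and identifying that tilt with the passage from $A_{\sigma\circ\theta_{i_1}\circ\cdots\circ\theta_{i_{r-1}}}$ to $A_{\sigma\circ\theta_{i_1}\circ\cdots\circ\theta_{i_{r}}}$, combined with the torsion-pair descriptions of \S\ref{subsec_stability_and_tilting} to handle the non-finite-dimensional heart. The only cosmetic difference is that the paper invokes the tilting-generator derived equivalence between consecutive mutated algebras from \cite[Proposition 3.1]{3tcy} directly, whereas you route the same identification through Theorem \ref{thm_perv} and the chamber analysis of \cite{3tcy}.
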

\begin{proof}
We have the derived equivalence between $A_{\sigma\circ\theta_{i_1}\circ\cdots\circ\theta_{i_{r-1}}}$ and $A_{\sigma\circ\theta_{i_1}\circ\cdots\circ\theta_{i_{r}}}$ obtained by the tilting generator as in \cite[Proposition 3.1]{3tcy}. 
It is easy to see that, under this equivalence, the module category of $A_{\sigma\circ\theta_{i_1}\circ\cdots\circ\theta_{i_{r}}}$ is obtained from the one of $A_{\sigma\circ\theta_{i_1}\circ\cdots\circ\theta_{i_{r-1}}}$ by tilting with respect to the torsion pair obtained by the simple module.

Combine with the descriptions in \S \ref{subsec_stability_and_tilting}, we can see the claim by induction with respect to $r$. 
\end{proof}
Let $S_i^\zeta$ be the simple $A_\sigma^\zeta$-module associated to the vertex $i$.
For $\zeta\in C_\theta$, we have
\begin{equation}\label{eq_alpha}
[S_i^\zeta]=\alpha(\theta,i)\in K_{\mathrm{num}}(\modfa)
\end{equation}
under the induced isomorphism 
\[
K_{\mathrm{num}}(\modfa)\simeq K_{\mathrm{num}}(\mathrm{mod}_{\mathrm{fin}}A_\sigma^\zeta),
\]
\begin{NB}
We put 
\[
\catAinfzeta:=\mathrm{mod}A^{\zeta}_\sigma.
\]
This is obtained from $\moda$ by tilting with respect to the right (left?) admissible subcategory
\[
\D{1}{t}\subset \modfa\subset\moda.
\]
\end{NB}

\begin{NB}
We say $\zeta^\circ$ is generic if $(\zeta^\circ,t)$ does not on an intersection of two chambers for any $t$.
Let $\{t_r\} (t_1<t_2<\cdots<0)$ be the set of all the parameter $t_r$ such that $(\zeta^\circ,t_r)$ is not generic. 
Let $\alpha^r$ be the real root such that $(\zeta^\circ,t_r)$ is on the wall $W_{\alpha^r}$.
\begin{prop}
There exist a sequence $\{i_r\}$ of elements in $I$ such that
\[
\alpha^r=\phi_{i_{1}}\cdots\phi_{i_{r-1}}\alpha_{i_r}.
\]
\end{prop}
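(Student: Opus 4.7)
The plan is to construct the sequence $\{i_r\}$ by tracking chamber crossings as $T$ decreases from zero, and to identify each wall normal $\alpha^r$ with the Weyl-group image of a simple root. The key observation is that the bijections $\theta_i\colon\Zh\to\Zh$ of \S\ref{subsec_14} realize the simple reflections $\phi_i$ of the affine Weyl group of type $A_{L-1}$ on the root lattice $P$.

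First I would record the compatibility between the two actions. For any real root $\alpha_{[h,h']}$,
\[
\phi_i\bigl(\alpha_{[h,h']}\bigr)=\alpha_{[\theta_i(h),\theta_i(h')]},
\]
and more generally $\phi_{j_1}\cdots\phi_{j_k}\bigl(\alpha_{[h,h']}\bigr)=\alpha_{[\theta(h),\theta(h')]}$ with $\theta=\theta_{j_1}\circ\cdots\circ\theta_{j_k}$. The first identity is verified by a case analysis according to whether $h,h'$ lie in the support of $\theta_i$, and matched against the formula $\phi_i(\alpha_j)=\alpha_j-a_{ij}\alpha_i$ in terms of the affine Cartan integers $a_{ij}$; the second identity follows by iteration.

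Next I would construct the sequence $\{i_r\}$ inductively. For $T$ slightly less than zero and $\zeta^\circ$ generic as in \S\ref{subsec_15}, the parameter $(\zeta^\circ,T)$ lies in $C_{\mathrm{id}}$. Suppose $i_1,\ldots,i_{r-1}$ have already been chosen so that the parameter just before crossing $W_{\alpha^r}$ lies in $C_{\theta'}$ with $\theta':=\theta_{i_1}\circ\cdots\circ\theta_{i_{r-1}}$. The wall crossed is adjacent to $C_{\theta'}$, so by the description of adjacent walls in \S\ref{subsec_14} it equals $W_{\alpha(\theta',i)}$ for a unique $i\in I$; set $i_r:=i$. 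After the crossing, the parameter lies in $C_{\theta'\circ\theta_{i_r}}$, which matches the inductive setup for step $r+1$.

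The desired formula now follows by induction on $r$. For $r=1$ one has $\alpha^1=\alpha(\mathrm{id},i_1)=\alpha_{i_1}$, matching the empty product. For the induction step, choose $n$ with $\pi(n)=i_r$; then
\[
\alpha^r=\alpha(\theta',i_r)=\alpha_{[\theta'(n-1/2),\theta'(n+1/2)]}=\phi_{i_1}\cdots\phi_{i_{r-1}}\bigl(\alpha_{[n-1/2,n+1/2]}\bigr)=\phi_{i_1}\cdots\phi_{i_{r-1}}\alpha_{i_r},
\]
where the third equality is the compatibility identity above. The main obstacle is exactly that compatibility identity: one must carefully handle the cases in which $h$ or $h'$ coincides with one of the half-integers moved by $\theta_i$, and check that the outcome agrees with the action of $\phi_i$ computed from the Cartan matrix; once this is in hand, the remainder of the argument is bookkeeping with the chamber structure of \S\ref{subsec_13}.
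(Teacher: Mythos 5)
Your proposal is correct and follows essentially the same route as the paper: an induction on the sequence of wall-crossings, identifying the chamber reached after $r-1$ crossings as the one whose adjacent walls are the $\phi_{i_1}\cdots\phi_{i_{r-1}}$-images of the simple-root walls, so that the wall crossed at the $r$-th step singles out $i_r$. The only difference is that you make explicit the compatibility $\phi_i(\alpha_{[h,h']})=\alpha_{[\theta_i(h),\theta_i(h')]}$ between the action of $\theta_i$ on $\Zh$ and the simple reflection on the root lattice, a verification the paper leaves implicit.
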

\begin{proof}
We can take $i_r$ inductively.
Assume that $(\zeta^\circ,t_r-\varepsilon)$ ($0<\varepsilon<<1$) is in a chamber surrounded by walls corresponding to 
\[
\phi_{i_{1}}\cdots\phi_{i_{r-1}}\alpha_{i} \quad (i\in I)
\]
and $(\zeta^\circ,t_r)$ is on the wall corresponding to
\[
\phi_{i_{1}}\cdots\phi_{i_{r-1}}\alpha_{i_r}.
\]
Then we can verify that $(\zeta^\circ,t_r+\varepsilon)$ ($0<\varepsilon<<1$) is in a chamber surrounded by walls corresponding to 
\[
\phi_{i_{1}}\cdots\phi_{i_{r}}\alpha_{i} \quad (i\in I).
\]
\end{proof}
\end{NB}

\section{Definition of the invariants}\label{sec_2}
\subsection{Ideal sheaf associated to Young diagrams}\label{subsec_21}
In this paper, we regard a Young diagram as a subset of $(\Z_{> 0})^2$.
For a Young diagram $\lambda$, let $\Lambda^x(\lambda)$ (resp. $\Lambda^y(\lambda)$ or $\Lambda^z(\lambda)$) be the subset of $(\Z_{> 0})^3$ consisting of the elements $(x,y,z)\in(\Z_{> 0})^3$ such that 
$(y,z)\in\lambda$ (resp. $(z,x)\in\lambda$ or $(x,y)\in\lambda$).
Given a triple $(\lambda_x, \lambda_y, \lambda_z)$ of Young diagrams, let $\Lambda^{\mathrm{min}}=\Lambda^{\mathrm{min}}_{\lambda_x, \lambda_y, \lambda_z}$ be the following subset of $(\Z_{> 0})^3$:
\[
\Lambda^{\mathrm{min}}:=\Lambda^x(\lambda_x)\cup\Lambda^y(\lambda_y)\cup\Lambda^z(\lambda_z)\subset(\Z_{> 0})^3.
\]
A subset $\Lambda$ of $(\Z_{> 0})^3$ is said to be a {\it $3$-dimensional Young diagram of type $(\lambda_x, \lambda_y, \lambda_z)$} if the following conditions are satisfied:
\begin{itemize}
\item if $(x,y,z)\notin\Lambda$, then $(x+1,y,z), (x,y+1,z), (x,y,z+1)\notin\Lambda$,
\item $\Lambda\supset\Lambda^{\mathrm{min}}$, and
\item $|\Lambda\backslash\Lambda^{\mathrm{min}}|<\infty$.
\end{itemize}
For a $3$-dimensional Young diagram $\Lambda\subset (\Z_{> 0})^3$, we define an ideal $I_\Lambda\subset\C[X,Y,Z]$ by
\[
I_\Lambda:=\bigoplus_{(x,y,z)\notin\Lambda}\C\cdot X^xY^yZ^z.
\]
This is invariant with respect to the torus action $T:=(\C^*)^3\curvearrowright \C^3$.
On the other hand, any torus invariant ideal can be described in this way.

Recall that the toric graph of $Y_\sigma$ has 
\begin{itemize}
\item $L$ vertices, 
\item $L-1$ closed edges, and 
\item $L+2$ open edges.
\end{itemize}
\begin{figure}[htbp]
  \centering
  \input{fig_toric_graph.tpc}
  \caption{A toric graph and Young diagrams}
  \label{fig_toric_graph}
\end{figure}
The set of torus invariant ideal sheaf on $Y$ is parametrized by the following data (\cite{mnop}):
\begin{itemize}
\item a pair of Young diagrams $\underline{\nu}=(\nu_+,\nu_-)$ and an $L$-tuple of Young diagrams $\underline{\lambda}=(\lambda^{(1/2)},\ldots,\lambda^{(L-1/2)})$ corresponding to $L+2$ open edges, 
\item an $(L-1)$-tuple of Young diagrams $\nu^{(1)},\ldots,\nu^{(L-1)}$ corresponding to closed edges, and 
\item an $L$-tuple of $3$-dimensional Young diagrams
$\Lambda^{(1/2)},\ldots,\Lambda^{(L-1/2)}$ corresponding to vertices such that $\Lambda^{(j)}$ is 
\begin{itemize}
\item of type $(\lambda^{(j)},\nu^{(j+1/2)},{}^{\mathrm{t}}\nu^{(j-1/2)})$ if $\sigma(j)=+$,
\item of type $(\lambda^{(j)},{}^{\mathrm{t}}\nu^{(j-1/2)},\nu^{(j+1/2)})$ if $\sigma(j)=-$,\label{item_weight}
\end{itemize}
where we put $\nu^{(0)}:=\nu_-$ and $\nu^{(L)}:=\nu_+$.
\end{itemize}
Let 
\[
\I(\unu,\ulam\sss;\sss\nu^{(1)},\ldots,\nu^{(L-1)})
\]
be the ideal associated with the $L$-tuple of $3$-dimensional Young diagrams
$\Lambda^{(1/2)},\ldots,\Lambda^{(L-1/2)}$ where
\begin{itemize}
\item $\Lambda^{(j)}=\Lambda_{\lambda^{(j)},\nu^{(j+1/2)},{}^{\mathrm{t}}\nu^{(j-1/2)}}^{\mathrm{min}}$ if $\sigma(j)=+$,
\item $\Lambda^{(j)}=\Lambda_{\lambda^{(j)},{}^{\mathrm{t}}\nu^{(j-1/2)},\nu^{(j+1/2)}}^{\mathrm{min}}$ if $\sigma(j)=-$.
\end{itemize}
We set
\[
\I_{\unu,\ulam}:=\I(\unu,\ulam\sss;\sss\emptyset,\ldots,\emptyset).
\]
\begin{NB}
Let
\[
(\sss\underline{\nu},\underline{\lambda}\sss)=\Bigl(\nu_+,\nu_-,\lambda_+^{(1)},\ldots,\lambda_+^{(L^+)},\lambda_-^{(1)},\ldots,\lambda_-^{(L^-)}\Bigr)
\]
an $(L+2)$-tuple of Young diagrams associated with the $L+2$ open edges in the toric graph.
We put empty Young diagrams on $L-1$ closed edges in the toric graph.
Note that for each vertex $j$ in the toric diagram we have torus invariant open subvariety $U_j\subset Y_\sigma$, which is isomorphic to $\C^3$. The three edges touching the vertex correspond to the three axes in $U_j\simeq\C^3$.
For the three Young diagrams on these three edges, we have the closed subscheme $Z_j:=Z_{\lambda_x, \lambda_y, \lambda_z}$ of $U_j\simeq \C^3$, which is also closed in $Y_\sigma$.
Note that the supports of $Z_j$ are disjoint with each other. 
Let $\I_{(\underline{\nu},\underline{\lambda})}$ denote the ideal sheaf on $Y_\sigma$ corresponding to the closed subscheme $\cup Z_j$. 
\end{NB}

\subsection{Open non-commutative Donaldson-Thomas invariants}
Assume $\zeta\in C_\theta$ for some $\theta\in\Theta$.
We put 
\begin{equation}\label{eq_III}
\III:=H^0_{\catAinfzeta}(\II).
\end{equation}
where $H^*_{\catAinfzeta}(-)$ represents the cohomology with respect to the t-structure corresponding to $\catAinfzeta$. 
\begin{ex}
\textup{(1)} In the case when
\[
\unu=\underline{\emptyset}:=(\emptyset,\emptyset),\quad 
\ulam=\underline{\emptyset}:=(\emptyset,\ldots,\emptyset),
\]
we have 
\[
P^\zeta_{\uemp,\uemp}=\I_{\uemp,\uemp}=\mathcal{O}_{Y_\sigma}
\]
for any $\zeta$.

\noindent\textup{(2)} In the case when $L_+=L_-=1$ and
\[
\unu=\underline{\emptyset}:=(\emptyset,\emptyset),\quad 
\ulam:=(\emptyset,\square),
\]
we have an exact sequence 
\[
0\to \I(\uemp,\ulam\sss;\sss\square)\to \I_{\uemp,\ulam}\to \mathcal{O}_C(-1)\to 0. 
\]
We can see that $\I(\uemp,\ulam\sss;\sss\square)$ does not have $\mathcal{O}_C(-n)$ as its quotient for $n>0$\footnote{Suppose that we have a surjection from $\I(\uemp,\ulam\sss;\sss\square)$ to $\mathcal{O}_C(-n)$. We may assume that the map is torus equivariant. Then the kernel is described by a pair of $3$-dimensional Young diagrams which is obtained by removing some boxes from the pair of $3$-dimensional Young diagrams associated to $\I(\uemp,\ulam\sss;\sss\square)$. Then we can see that we can not remove boxes so that the cokernel is $\mathcal{O}_C(-n)$ ($n>0$).}. 
This means that $\I(\uemp,\ulam\sss;\sss\square)$ is in $^{-1}\mathrm{Per}(Y/X)$. 
Hence 
\[
P^\zeta_{\unu,\ulam}=\I(\uemp,\ulam\sss;\sss\square)\neq \I_{\uemp,\ulam}.
\]
for $\zeta=(\zeta_{0},\zeta_{1})$ such that $\zeta_0,\zeta_1<0$.
\end{ex}
Take $\mathbf{v}\in K_{\mathrm{num}}(\catAzeta)\simeq \Z^I$.
\begin{defn}\label{defn_pair}
A $(\ztpair)$-pair of type $\mathbf{v}$ is a pair $(V,s)$ of an element $V\in \catAzeta$ with $[V]=\mathbf{v}$ and a surjection $s\colon\III\surj V$ in $\catAinfzeta$.

Two $(\ztpair)$-pairs $(V,s)$ and $(V',s')$ are said to be equivalent if there exists a isomorphism between $V$ and $V'$ compatible with $s$ and $s'$.
\end{defn}
\begin{thm}\label{thm_moduli}
There is a coarse moduli scheme $\MMM$ parameterizing equivalence classes of $(\ztpair)$-pairs $(V,s)$ with $[V]=\mathbf{v}$.
\end{thm}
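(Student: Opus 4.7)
The plan is to construct $\MMM$ as a GIT quotient following King's framed-moduli approach, after first passing to the quiver-with-potential description. By Proposition \ref{prop_t_str} we have equivalences $\catAinfzeta \simeq \mathrm{mod}\, A_\sigma^\zeta$ and $\catAzeta \simeq \mathrm{mod}_{\mathrm{fin}} A_\sigma^\zeta$, which translate the objects of interest into $A_\sigma^\zeta$-modules: $\III$ becomes an $A_\sigma^\zeta$-module (infinite-dimensional in general), and a $(\ztpair)$-pair of type $\mathbf{v}$ becomes a pair $(V,s)$ consisting of a finite-dimensional $A_\sigma^\zeta$-module $V$ with $[V] = \mathbf{v}$ together with a surjection $s \colon \III \twoheadrightarrow V$.

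The next step is to replace $\III$ by a finite-dimensional substitute. Because $A_\sigma^\zeta$ is Noetherian (the argument of Lemma \ref{lem_noether} applies after the mutations of \S\ref{subsec_15}) and the dimension vector $\mathbf{v}$ is fixed, the kernel of every surjection $\III \twoheadrightarrow V$ with $[V] = \mathbf{v}$ contains the submodule $K(\mathbf{v}) \subset \III$ obtained as the intersection of all such kernels, and the quotient $\III(\mathbf{v}) := \III/K(\mathbf{v})$ is finite-dimensional. Surjections $\III \twoheadrightarrow V$ with $[V] = \mathbf{v}$ then correspond bijectively to surjections $\III(\mathbf{v}) \twoheadrightarrow V$, reducing the moduli problem to one parametrized by finite-dimensional data.

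Now I would set up the standard framed representation variety. Fix vector spaces $\{V_i\}_{i \in I}$ with $\dim V_i = v_i$ and let $R(\mathbf{v})$ denote the affine variety of $A_\sigma^\zeta$-module structures on $V = \bigoplus_i V_i$ (linear maps assigned to the arrows of the quiver underlying $A_\sigma^\zeta$, cut out by the relations coming from its potential). Over $R(\mathbf{v})$ the vector space $\Hom_{A_\sigma^\zeta}(\III(\mathbf{v}), V)$ depends algebraically on the representation, and these assemble into a scheme $\wt R(\mathbf{v})$ affine over $R(\mathbf{v})$. The reductive group $G := \prod_{i \in I} GL(V_i)$ acts on $\wt R(\mathbf{v})$ by simultaneous change of basis on $V$, and the surjectivity of $s$ cuts out a $G$-invariant open subscheme $\wt R(\mathbf{v})^{\mathrm{surj}}$ on which every stabilizer is trivial, since any automorphism of $V$ compatible with a surjection from $\III(\mathbf{v})$ must be the identity. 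I then define $\MMM := \wt R(\mathbf{v})^{\mathrm{surj}} /\!/ G$, realized as a GIT quotient with linearization given by a suitable character of $G$ (for instance a positive combination of the determinant characters) so that every point is stable; coarse representability follows from the universal property of this quotient together with the triviality of stabilizers.

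The hard part will be the reduction to $\III(\mathbf{v})$: one must verify that $K(\mathbf{v})$ cuts out a finite-dimensional quotient and, more importantly, that the construction of $\III(\mathbf{v})$ is sufficiently functorial in families to promote the set-theoretic bijection above to an equivalence of moduli functors, so that the GIT quotient is a coarse moduli space and not merely a set-theoretic parametrization. This rests on the explicit combinatorial description of $\III$ in terms of the quiver with potential, analogous to the one used in \cite{3tcy} and to the crystal-melting picture recalled in the introduction; only finitely many of the weight-vector generators of $\III$ can survive in an $A_\sigma^\zeta$-module of dimension $\mathbf{v}$, and this is what ultimately makes $\III(\mathbf{v})$ tractable and its formation compatible with base change.
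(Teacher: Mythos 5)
Your overall strategy (a framed representation variety over $R(\mathbf{v})$ plus a King-type GIT quotient with a cyclicity stability) is the same as the paper's, but the step you isolate as the crucial reduction is false, and the paper's proof is organized precisely so as to avoid it. The quotient $\III(\mathbf{v})=\III/K(\mathbf{v})$ is \emph{not} finite-dimensional in general. Take $\unu=\ulam=\uemp$, so that $\III=\mathcal{O}_{Y_\sigma}$, and take $\mathbf{v}=\delta$: among the surjections of class $\delta$ are $\mathcal{O}_{Y_\sigma}\surj\mathcal{O}_y$ for every closed point $y$ off the exceptional fibre (these lie in every heart $\catAinfzeta$), and the intersection of their kernels is zero, since a global function on $X$ vanishing at every point is zero; hence $\III(\delta)=\III$ is infinite-dimensional. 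The same phenomenon occurs for general $(\unu,\ulam)$, because the quotients of a fixed numerical class move in positive-dimensional families over the non-compact threefold, so intersecting kernels never produces a finite-dimensional object. Since you rest both the construction of $\wt{R}(\mathbf{v})$ and the compatibility with families on this finiteness, the argument as written does not close.

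The repair is exactly what the paper does in \S\ref{subsec_proof}: use the Noetherianity of $A_\sigma^\zeta$ not to truncate $\III$ but to choose a finite presentation $\oplus P_i^{\oplus b_i}\to\oplus P_i^{\oplus a_i}\to\III\to 0$. Then $\Hom_{A_\sigma^\zeta}(\III,V)$ is the kernel of an explicit linear map $\oplus V_i^{\oplus a_i}\to\oplus V_i^{\oplus b_i}$ depending algebraically on the point of $R(\mathbf{v})$, which is already the finite-dimensional, base-change-compatible object you were after; equivalently, one adjoins a framing vertex $*$ with $a_i$ arrows into $i$ and relations recording the presentation, so that $\Hom_{A_\sigma^\zeta}(\III,V)\simeq\Ext^1(S_*,V)$ and surjectivity of $s$ becomes generation by the framing vector, i.e.\ King's $\theta_{\mathrm{cyc}}$-stability for dimension vector $(\mathbf{v},1)$. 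With that substitution the rest of your construction (openness of the surjective locus, triviality of stabilizers, quotient by $\prod GL(V_i)$) is correct and coincides with the paper's proof.
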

The proof of this theorem is given in \S \ref{subsec_proof}.

\begin{NB}
We define $\PP\in\moda$ by
\[
\PP:=H^0_{\moda}(\II).
\]
\begin{defn}
An $\Apair$-pair is a pair of a finite dimensional $A_\sigma$-module $V$ and a homomorphism $s\colon \PP\to V$. 
\end{defn}
Let $\zeta\in(K_{\mathrm{num}}(\modfa)\otimes \R)^*$ be a stability parameter.
\begin{defn}
We say $(V,s)$ is $\zeta$-(semi)stable if 
\begin{enumerate}
\item[\textup{(A)}] for any nonzero submodule $0\neq V'\subseteq V$, we have
\[
\zeta\cdot\dimv V' \,(\le)\,0,
\]
\item[\textup{(B)}] for any proper submodule $V'\subsetneq V$ through which $s$ factors, we have
\[
\zeta\cdot\dimv V'\,(\le)\,\zeta \cdot\dimv V.
\]
\end{enumerate}
\end{defn}
\begin{rem}
The condition \textup{(B)} is equivalent to the following:
\begin{enumerate}
\item[\textup{(B')}] for any nonzero quotient module $V\twoheadrightarrow V''$ such that the composition $\PP\to V\twoheadrightarrow  V''$ is trivial, we have 
\[
0\,(\le)\,\zeta \cdot\dimv V''.
\]
\end{enumerate}
\end{rem}
\begin{thm}\label{thm_moduli}
There is a coarse moduli scheme $\MM$ parameterizing S-equivalence classes of $\zeta$-semistable pairs $(V,s)$.
\end{thm}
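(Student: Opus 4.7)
The plan is to translate the problem into quiver representations via Proposition \ref{prop_t_str} and then construct the moduli space as a GIT quotient of framed representations, in the spirit of King. Assume $\zeta \in C_\theta$. The equivalence $\catAinfzeta \simeq \mathrm{mod}A_\sigma^\zeta$ identifies $\III$ with an $A_\sigma^\zeta$-module (in general not finite-dimensional) that decomposes as $\III = \bigoplus_{i \in I} e_i \III$, and identifies $\catAzeta$ with $\mathrm{mod}_{\mathrm{fin}} A_\sigma^\zeta$. A $(\ztpair)$-pair $(V, s)$ of class $\vv$ then becomes an $A_\sigma^\zeta$-linear surjection $s : \III \surj V$ with $[V] = \vv \in \Z^I$.

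The first key step is a boundedness statement: there should exist a finite-dimensional graded subspace $W = \bigoplus_i W_i \subset \III$ such that for every surjection $s : \III \surj V$ with $[V] = \vv$ the composition $W \hookrightarrow \III \surj V$ is itself surjective. This should follow from the fact that $\III$ is finitely generated over the Noetherian algebra $A_\sigma^\zeta$ (Lemma \ref{lem_noether}), combined with the observation that each $e_i \III$ carries a torus-equivariant grading whose behaviour outside a compact region coincides with that of the coherent ideal $\II$ described in \S\ref{subsec_21}; since $\dim V$ is bounded in terms of $\vv$, the generators of any finite-dimensional quotient must lie in a uniformly bounded weight range.

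With $W$ fixed, a $(\ztpair)$-pair of class $\vv$ is equivalent to the following data: (i) a representation $\phi = (\phi_a)_a$ of the quiver of $A_\sigma^\zeta$ on a fixed graded vector space $V_\bullet = \bigoplus_i V_i$ with $\dim V_i = v_i$, satisfying the Jacobian relations of the superpotential, and (ii) a graded linear map $\bar s : W \to V_\bullet$ which extends to an $A_\sigma^\zeta$-linear map $\III \to V_\bullet$ whose image is all of $V_\bullet$. The extension condition is closed, and the surjectivity is open, in the affine variety of tuples $(\phi, \bar s)$, producing an invariant locally closed subscheme $\mathcal R^{ss}$ on which $G := \prod_i GL(V_i)$ acts with orbits parametrizing equivalence classes of pairs. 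Linearizing by the character $\chi((g_i)) = \prod_i \det(g_i)^{-1}$, the surjectivity of $\bar s$ forces trivial stabilizers, so every point of $\mathcal R^{ss}$ is GIT-stable and one sets $\MMM := \mathcal R^{ss}/\!\!/_\chi G$, which is a coarse moduli scheme by standard arguments.

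The main obstacle is the boundedness step; once it is in hand, the GIT construction is formal and follows the pattern used in \cite{3tcy} for the case $\unu = \ulam = \uemp$ with $\vv$ a multiple of $\delta$. Establishing the bound will require combining the combinatorial description of $\II$ in \S\ref{subsec_21} with the explicit tilting of Proposition \ref{prop_t_str} that produces $\catAinfzeta$ from the perverse heart, so as to describe the generators of $\III$ as an $A_\sigma^\zeta$-module precisely enough to truncate to a finite-dimensional $W$ depending only on $\vv$.
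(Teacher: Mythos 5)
Your overall strategy coincides with the paper's: pass to $A_\sigma^\zeta$-modules via Proposition \ref{prop_t_str}, use Noetherianity of $A_\sigma^\zeta$ (Lemma \ref{lem_noether}) to get a finiteness statement about $\III$, encode a pair $\III\surj V$ as a framed quiver representation, and take a King-type GIT quotient. The difference is in how the finiteness is packaged, and here the paper's route is both simpler and avoids the step you flag as the ``main obstacle.'' The paper fixes a finite presentation $\oplus P_i^{\oplus b_i}\overset{J}{\to}\oplus P_i^{\oplus a_i}\to \III\to 0$ and adds a framing vertex $*$ with $a_i$ arrows into $i$ and with the relations induced by $J$; since $\mathrm{Hom}(P_i,V)=V_i$ is finite dimensional, a module map $\III\to V$ is then literally a choice of framing vectors satisfying finitely many relations, i.e.\ $\mathrm{Hom}_{A_\sigma^\zeta}(\III,V)\simeq\mathrm{Ext}^1(S_*,V)$, with no need to exhibit a finite-dimensional window $W\subset\III$ whose image spans $V$ linearly. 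Surjectivity of $s$ is then the statement that the extension $V_s$ of $S_*$ by $V$ is generated by $(V_s)_*$, which King's criterion identifies with $\theta_{\mathrm{cyc}}$-stability for a character that is positive on all $i\in I$ and vanishes on $(\vv,1)$; this is what produces the coarse moduli scheme.

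Two steps in your version, as written, do not stand up and should be repaired. First, your boundedness statement is true but the mechanism you propose (torus weights and asymptotics of $\II$) is not the right one and would be painful to carry out; the correct argument is elementary: if $g_1,\dots,g_N$ generate $\III$ and $s$ is surjective onto $V$ with $\dim V=|\vv|$, the increasing chain of subspaces spanned by images of paths of length $\le m$ applied to the $g_k$ stabilizes after at most $\dim V$ steps at a submodule containing the generators' images, hence at $V$; so $W$ may be taken to be the span of all $p\cdot g_k$ with $p$ of length $<|\vv|$. Second, the inference ``surjectivity forces trivial stabilizers, so every point of $\mathcal{R}^{ss}$ is GIT-stable'' is not valid: freeness of the action does not imply stability, and you have not shown that your surjective locus coincides with the $\chi$-semistable locus, which is what is needed both to make $\mathcal{R}^{ss}/\!\!/_\chi G$ a genuine orbit space and to see that it corepresents the moduli functor. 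This identification is exactly King's criterion for the framed quiver (generation by the framing vertex $\iff$ $\theta_{\mathrm{cyc}}$-stability), and it must be invoked explicitly rather than deduced from triviality of stabilizers.
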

The proof of this theorem is given in \S \ref{subsec_proof}.

\begin{NB}
Let $A_\sigma$ be a non-commutative crepant resolution of $X$ (\cite{}). 
We identify the derived category of coherent sheaves on $Y$ and the one of $A_\sigma$-modules by the derived equivalence which maps $\mathrm{mod}A_\sigma$ to the category of perverse sheaves on $Y$. 
Let $H^*_{\mathrm{mod}A}(-)$ denote the cohomology with respect to the t-structure corresponding to $\mathrm{mod}A$. 
We set 
\[
P_{(\underline{\nu},\underline{\lambda})}:=H^0_{\mathrm{mod}A}(\mathcal{I}_{(\underline{\nu},\underline{\lambda})}).
\]
Let $\zeta=(\zeta_1,\ldots,\zeta_l)\in \R^l$ be a $l$-tuple of real numbers. 
\begin{defn}
We say $(V,s)$ is $\zeta$-(semi)stable if 
\begin{enumerate}
\item[\textup{(A)}] for any nonzero submodule $0\neq V'\subseteq V$, we have
\[
\zeta\cdot\dimv V' \,(\le)\,0,
\]
\item[\textup{(B)}] for any proper submodule $V'\subsetneq F$ which includes $\mathrm{Im}(s)$, we have
\[
\zeta\cdot\dimv V'\,(\le)\,\zeta \cdot\dimv V.
\]
\end{enumerate}
\end{defn}
\begin{thm}\label{thm_moduli}
There is a coarse moduli scheme parameterizing S-equivalence classed of $\zeta$-semistable pairs $(V,s)$.
\end{thm}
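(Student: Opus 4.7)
The plan is to realise $\MMM$ as a GIT quotient of a suitable representation variety of the quiver with potential describing $A_\sigma^\zeta$. By Proposition \ref{prop_t_str}, the equivalence $\catAinfzeta\simeq \mathrm{mod}A_\sigma^\zeta$ identifies $\catAzeta$ with $\mathrm{mod}_{\mathrm{fin}}A_\sigma^\zeta$, carries $\III$ to a left $A_\sigma^\zeta$-module, and translates surjections in $\catAinfzeta$ into surjections of $A_\sigma^\zeta$-modules; the class $[V]=\mathbf{v}\in K_{\mathrm{num}}(\catAzeta)$ becomes a dimension vector $\mathbf{v}\in\Z^I_{\geq 0}$. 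Throughout I work inside this module-theoretic picture.

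First I would show that $\III$, viewed as an $A_\sigma^\zeta$-module, is finitely generated. This follows from Lemma \ref{lem_noether} (Noetherianity of $A_\sigma$, hence of its derived-equivalent tilt $A_\sigma^\zeta$) together with the fact that, outside a compact subset of $X$, the object $\II$ already lies in $\catAinfzeta$ and is coherent there, so the discrepancy between $\II$ and $\III=H^0_{\catAinfzeta}(\II)$ is supported on finitely many vertices of the toric graph. Fix generators $g_1,\dots,g_N$ with $g_k\in e_{i_k}\III$, and a finite set of relations among them. A homomorphism $s\colon\III\to V$ into any $V\in\mathrm{mod}_{\mathrm{fin}}A_\sigma^\zeta$ is then determined by the tuple $\bigl(s(g_k)\bigr)_k\in\bigoplus_k e_{i_k}V$, subject to those relations.

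Next, fix a graded vector space $V=\bigoplus_{i\in I}\C^{v_i}$ of total dimension vector $\mathbf{v}$, and form the affine scheme
\[
R\;\subset\;\mathrm{Rep}(Q_\sigma^\zeta,\mathbf{v})\,\times\,\bigoplus_{k=1}^N e_{i_k}V
\]
cut out by (i)~the Jacobi relations $\partial w_\sigma^\zeta=0$ of the potential, and (ii)~the substitution of $(s(g_k))$ into the chosen relations of $\III$. A point of $R$ is exactly an $A_\sigma^\zeta$-module structure on $V$ together with a module homomorphism $\III\to V$. The group $G=\prod_i GL(v_i)/\Delta(\C^*)$ acts, and its orbits are precisely the equivalence classes of pairs in Definition \ref{defn_pair} (once surjectivity is imposed). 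Following King's construction of moduli of quiver representations with framings, the surjectivity of $s$ is captured by a Hilbert-Mumford test for a suitable character $\chi$ of $G$: if $s$ is not surjective, its image is contained in a proper sub-$A_\sigma^\zeta$-module $V'\subsetneq V$, and the one-parameter subgroup contracting $V$ onto $V'$ violates $\chi$-semistability; conversely, $\chi$-semistable points have surjective $s$. Hence $R^{ss}=\{s\text{ surjective}\}$, and
\[
\MMM:=R^{ss}\!/\!\!/_{\chi} G
\]
is a quasi-projective scheme corepresenting the functor of equivalence classes of $(\ztpair)$-pairs of type $\mathbf{v}$; this is the required coarse moduli space.

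The main obstacle will be the first step: giving an explicit finite presentation of $\III$ as an $A_\sigma^\zeta$-module starting from the combinatorial description of $\II$ and the derived equivalence of Theorem \ref{thm_perv}. This requires tracking through the tilting cohomology vertex by vertex and is the technically heaviest part of the argument; it is precisely the sort of combinatorial bookkeeping that the author postpones to Section \ref{sec_7}. Once such a presentation is in hand, boundedness is automatic (the dimension vector $\mathbf{v}$ is fixed), $R$ is a closed subscheme of an affine space, and the GIT quotient delivers the moduli space without further subtleties.
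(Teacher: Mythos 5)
Your proposal follows essentially the same route as the paper: use Noetherianity (Lemma \ref{lem_noether}) to get a finite presentation of $\III$ as an $A_\sigma^\zeta$-module, encode a homomorphism $\III\to V$ as framing data on the representation variety of the quiver with relations, and apply King's GIT construction with a character for which semistability is exactly surjectivity of $s$. The only cosmetic difference is that the paper packages the framing as an extra vertex $*$ of a new quiver $\bar{A}^\zeta_\sigma(\unu,\ulam)$ with $\ker(P_*\to S_*)\simeq\III$, and it only needs the \emph{existence} of a finite presentation (immediate from Noetherianity), not the explicit combinatorial one you flag as the hard step — that explicit description (Section \ref{sec_7}) is used later for the torus-fixed-point computation, not here.
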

\end{NB}

\begin{defn}
We define the generating function of the Euler characteristic version of {open non-commutative Donaldson-Thomas invariants}
\[
\mathcal{Z}^{\mathrm{E-ncDT}}_{\ztpair}(q_0,\ldots,q_{L-1}):=\sum_\mathbf{v} e\Bigl(\MMM\Bigr)\cdot (\mathbf{q}_\theta)^{\mathbf{v}},
\]
where $(\mathbf{q}_\theta)^{\mathbf{v}}:=\prod(\mathbf{q}^{\alpha(\theta,i)})^{v_i}$\footnote{We use the monomial $\mathbf{q}^{\alpha(\theta,i)}$ since $\alpha(\theta,i)$ is the numerical class of the simple $A_\sigma^\zeta$-module $S_i^\zeta$ (see \eqref{eq_alpha}).}.
\end{defn}

\section{Torus fixed points, crystal melting and vertex operators}\label{sec_3}
\subsection{Crystal melting model and torus fixed points}\label{subsec_crystal_and_fixedpoint}
\begin{defn}
Let $\mu$ and $\mu'$ be two Young diagrams. 
We say $\mu\pg\mu'$ if the row lengths satisfy
\[
\mu_1\geq\mu'_1\geq\mu_2\geq\mu'_2\geq\cdots,
\]
and $\mu\mg\mu'$ if the column lengths satisfy
\[
\tenchi\mu_1\geq\tenchi\mu'_1\geq\tenchi\mu_2\geq\tenchi\mu'_2\geq\cdots.
\]
\end{defn}
\begin{defn}
Let $\Pi$ denote the set of all Young diagrams.
A {\it transition} $\V$ of Young diagrams of type $(\type)$ is a map $\V\colon \Z\to \Pi$ such that
\begin{itemize}
\item $\V(n)=\nu_-$ for $n\ll0$ and $\V(n)=\nu_+$ for $n\gg0$,
\item $\V(h-\ulam\circ\theta(h)/2)\overset{\sigma\circ\theta(h)}{\succ}\V(h+\ulam\circ\theta(h)/2)$.
\end{itemize}
\end{defn}
\begin{NB}
\begin{defn}
Let $\Pi$ denote the set of all Young diagrams.
For a transition $\V$ of Young diagram of type $(\type)$, we define the weight $w(\V)\in\Z^I$ of $\V$ by 
\[
w(\V)_i:=\sum_{\pi(n)=i}\sharp(\V(n)\backslash \V_{\mathrm{min}}(n)).
\]
\end{defn}
\begin{defn}
We put
\[
P(\sigma,\theta,\underline{\nu},\underline{\lambda})_i:=
\bigoplus_{
\begin{subarray}{c}
n\in \pi^{-1}(i)\\
(x,y)\notin \Vmin(n)
\end{subarray}}
\C\cdot v(n,x,y). 
\]
We define an $A_{\sigma\circ\theta}$-action on $P(\sigma,\theta,\underline{\nu},\underline{\lambda}):=\oplus P(\sigma,\theta,\underline{\nu},\underline{\lambda})_i$ by 
\begin{align*}
h_j^+(v(h-1/2,x,y))=v(h+1/2,x,y)
h_j^-
r_i
r_i
\end{align*}
\end{defn}
\end{NB}
\begin{defn}
For a transition $\V$ of Young diagram of type $(\type)$, we put 
\[
P(\V)_i:=\{(n,x,y)\in \Z\times (\Z_{\geq 0})^2\mid n\equiv i\ (\mr{mod} L),\ (x,y)\notin \V(n)\}
\]
and $P(\V):=\sqcup_i P(\V)_i$. 
We use the notation $p(n,x,y)$ for an element in $P(\V)$. 
\end{defn}
\begin{lem}[\protect{\cite[\S 3.3.3, Remark 3.7]{open_3tcy}}]
There is a transition $\Vmin=\Vmin^{\type}$ of Young diagrams of type $(\type)$ such that for any transition $\V$ of Young diagrams of type $(\type)$ we have $P(\Vmin)\supseteq P(\V)$. 
\end{lem}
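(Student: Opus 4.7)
The plan is to produce $\Vmin$ as the pointwise intersection of all transitions of type $(\type)$ and then verify that this intersection is itself a transition; once that is done, the minimality statement $P(\Vmin)\supseteq P(\V)$ is immediate, since $(x,y)\notin\V(n)$ forces $(x,y)\notin\Vmin(n)$. Explicitly, I would set
\[
\Vmin(n) := \bigcap_\V \V(n) \subset (\Z_{>0})^2,
\]
where the intersection ranges over all transitions $\V$ of the prescribed type (assumed nonempty; a transition can always be built by interpolating between $\nu_-$ and $\nu_+$ with sufficient slack).

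Most of the argument is a routine verification of the three defining conditions of a transition. That each $\Vmin(n)$ is a Young diagram follows since the lower-left closure property defining a Young diagram is preserved by arbitrary intersection. For the interlacing, I would use the reformulation of $\mu\pg\mu'$ as the pair of inequalities $\mu_i\geq\mu'_i\geq\mu_{i+1}$ on row lengths, together with the observation that row lengths of an intersection are pointwise minima; taking $\min_\V$ preserves each inequality, so the relation $\pg$ descends to $\Vmin$, and the same argument using column lengths handles $\mg$. In particular, for every $h\in\Zh$, the local interlacing $\Vmin(h-\ulam\circ\theta(h)/2)\overset{\sigma\circ\theta(h)}{\succ}\Vmin(h+\ulam\circ\theta(h)/2)$ holds.

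The main obstacle is the boundary condition $\Vmin(n)=\nu_-$ for $n\ll 0$ (dually $\nu_+$ for $n\gg 0$). The containment $\Vmin(n)\subseteq\nu_-$ at any $n$ that is below the threshold of some single transition is automatic, but the reverse containment requires a uniform-in-$\V$ bound past which every transition must still equal $\nu_-$. The key observation I would exploit is that $\ulam\colon\Zh\to\{\pm\}$ is itself a Maya diagram and hence eventually constant, so for $h$ sufficiently negative the local interlacing at $h$ is of a fixed ``trivial'' type determined only by $\sigma\circ\theta$; combined with the fixed finite shape of $\nu_-$, a standard propagation argument then shows that every transition must satisfy $\V(n)=\nu_-$ past a bound $N$ depending only on $(\type)$. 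The dual statement at $+\infty$ is symmetric, completing the verification that $\Vmin$ is a transition and, by construction, the pointwise smallest one.
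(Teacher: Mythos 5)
Your route is genuinely different from the paper's: the paper does not verify anything here, it simply cites the explicit combinatorial construction of $\Vmin$ (the map $G^\nu_{\sigma,\lambda,\theta}$ of \cite[\S 3.3.3]{open_3tcy}), whereas you build $\Vmin$ abstractly as the pointwise intersection $\Vmin(n)=\bigcap_{\V}\V(n)$. The core of your verification is sound: arbitrary intersections of Young diagrams are Young diagrams, the row (resp.\ column) lengths of the intersection are the pointwise minima of the row (resp.\ column) lengths, and both families of inequalities defining $\mu\pg\mu'$ (namely $\mu_k\geq\mu'_k$ and $\mu'_k\geq\mu_{k+1}$) pass to minima taken over the same index set, so each local interlacing descends to $\Vmin$; the same works for $\mg$ with columns. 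Nonemptiness of the family of transitions is also fine, since $\ulam\circ\theta\equiv -$ far to the left and $\equiv +$ far to the right, so one can grow from $\nu_-$ to a diagram containing $\nu_+$ box by box on the left, stay constant through the middle, and shrink on the right.

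The one step that fails as stated is your treatment of the boundary condition. There is no bound $N$, uniform in $\V$, past which every transition \emph{equals} $\nu_-$: already for $L=2$, $\unu=\ulam=\uemp$, $\theta=\mathrm{id}$, the sequence $\V_M$ with $\V_M(n)=\square$ for $-M\leq n\leq 0$ and $\V_M(n)=\emptyset$ otherwise is a transition for every $M\geq 0$, and $\V_M(-M)\neq\nu_-$. So the ``propagation argument'' you invoke would have to prove something false. What the eventual constancy of $\ulam\circ\theta$ does give — and what suffices — is the uniform containment $\V(n)\supseteq\nu_-$ for all $n\leq N_0$ with $N_0$ depending only on $(\type)$: once $\ulam\circ\theta(h)=-$ for all $h<N_0$, the interlacing reads $\V(h+1/2)\succ\V(h-1/2)$ and hence forces $\V(n-1)\subseteq\V(n)$ for $n\leq N_0$, which together with $\V(m)=\nu_-$ for $m\ll 0$ yields $\V(n)\supseteq\nu_-$ there, hence $\Vmin(n)\supseteq\nu_-$ for $n\leq N_0$. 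The reverse containment needs only a single witness: any one transition $\V_0$ satisfies $\V_0(n)=\nu_-$ for $n<N_1$, whence $\Vmin(n)\subseteq\nu_-$ there. Squeezing gives $\Vmin(n)=\nu_-$ for $n<\min(N_0,N_1)$, and dually at $+\infty$. With this repair your proof is complete; what the paper's explicit construction buys in addition is the concrete description of $\Vmin$ that is actually used later, in Proposition \ref{prop_sec_6} and \S \ref{sec_7}.
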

\begin{proof}
In \cite{open_3tcy}, we use the notation $\nu$ and $\lambda$ instead of $\unu$ and $\ulam$.
In \cite[\S 3.3.3]{open_3tcy}, a map $G^\nu_{\sigma,\lambda,\theta}$ is given.
As is mentioned in \cite[Remark 3.7]{open_3tcy}, this map gives a sequence of Young diagrams $\Vmin^{\type}$, which satisfies the condition.
\end{proof}
\begin{defn}
A crystal of type $(\type)$ is a subset $P(\V)$ of $P(\Vmin)$ such that $|P(\Vmin)\backslash P(\V)|<\infty$.
\end{defn}
The lemma above claims that giving a transition $\V$ of Young diagram of type $(\type)$ is equivalent to giving a crystal $P(\V)$ of type $(\type)$.
\begin{defn}\label{defn_cryctal_rep}
Let $M(\V)=\oplus_iM(\V)_i$ be the vector space spanned by the elements in $P(\V)=\sqcup_iP(\V)_i$.
We define an $A_{\sigma\circ\theta}$-action on $M(\V)$ by 
\begin{align*}
h_j^+(p(h-1/2,x,y))&=
\begin{cases}
p(h+1/2,x,y) & (\sss\ulam\circ \theta(h)=+),\\
p(h+1/2,x+1,y) & (\sss\ulam\circ \theta(h)=-, \sigma\circ\theta(h)=-),\\
p(h+1/2,x,y+1) & (\sss\ulam\circ \theta(h)=-, \sigma\circ\theta(h)=+),\\
\end{cases}\\
h_j^-(p(h+1/2,x,y))&=
\begin{cases}
p(h-1/2,x,y) & (\sss\ulam\circ \theta(h)=-),\\
p(h-1/2,x+1,y) & (\sss\ulam\circ \theta(h)=+, \sigma\circ\theta(h)=-),\\
p(h-1/2,x,y+1) & (\sss\ulam\circ \theta(h)=+, \sigma\circ\theta(h)=+),\\
\end{cases}\\
r_i(p(n,x,y))&=
\begin{cases}
p(n,x+1,y) & (\sss\sigma\circ\theta(n-1/2)=\sigma\circ\theta(n+1/2)=+),\\
p(n,x,y+1) & (\sss\sigma\circ\theta(n-1/2)=\sigma\circ\theta(n+1/2)=-).
\end{cases}
\end{align*}
\end{defn}
\begin{prop}\label{prop_sec_6}
For $\zeta\in C_\theta$, we have $M(\Vmin^{\type})\simeq \III$ as an $A_{\sigma\circ\theta}$-module.
\end{prop}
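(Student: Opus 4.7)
The plan is to build a torus-equivariant isomorphism $M(\Vmin^{\type}) \simeq \III$ of $A_{\sigma\circ\theta}$-modules by matching both sides weight-space by weight-space under the $(\C^*)^3$-action on $Y_\sigma$. Both sides are naturally torus-equivariant: on the right, $\II$ carries an equivariant structure from $Y_\sigma$ which descends to $\III=H^0_{\catAinfzeta}(\II)$; on the left, every basis vector $p(n,x,y)$ has an explicit torus weight, and Definition \ref{defn_cryctal_rep} prescribes exactly how each arrow of $A_{\sigma\circ\theta}$ shifts that weight. Once a bijection of weight bases is produced, checking compatibility of the generators of $A_{\sigma\circ\theta}$ reduces to a finite comparison on each weight space.

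I would first handle the initial chamber $C_{\mathrm{id}}$, where $A_\sigma^\zeta=A_\sigma$ and $\catAinfzeta\simeq\moda$ by Proposition \ref{prop_t_str}. Localizing $\II$ at each torus-fixed point $p_j$ of $Y_\sigma$ yields a chart $U_j\simeq\C^3$ on which $\II|_{U_j}$ has the explicit torus-equivariant basis $\{X^xY^yZ^z:(x,y,z)\notin\Lambda^{\mathrm{min}}_j\}$ from \S\ref{subsec_21}. Transporting these local pieces through the equivalence $\perv\simeq\moda$ of Theorem \ref{thm_perv}, and re-indexing them by $i=\pi(n)$ and by $(x,y)$ according to the conventions of Definition \ref{defn_cryctal_rep}, recovers $P(\Vmin^{\sigma,\mathrm{id}\sss;\sss\unu,\ulam})$ as the weight set of $\III$. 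Checking that the arrows $h_j^\pm,r_i$ of $A_\sigma$ act on the resulting weight vectors exactly as prescribed in Definition \ref{defn_cryctal_rep} is then a direct calculation from the explicit form of the tilting generator used in \cite{3tcy}.

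For general $\theta\in\Theta$, I would induct on the length of the mutation sequence $\theta_{i_1},\ldots,\theta_{i_{R-1}}$ from \S\ref{subsec_15}. By Proposition \ref{prop_t_str}, each elementary step $\theta\mapsto\theta\circ\theta_i$ amounts to tilting the t-structure at the simple $S_i^\zeta$, after which $\III$ is computed as $H^0$ with respect to the new t-structure and is naturally a module over the mutated algebra $A_{\sigma\circ\theta\circ\theta_i}$. On the combinatorial side, by \cite[Remark 3.7]{open_3tcy}, the crystal $\Vmin^{\type}$ is obtained from $\Vmin^{\sigma,\mathrm{id}\sss;\sss\unu,\ulam}$ by iterating an analogous elementary move built into the map $G^\nu_{\sigma,\lambda,\theta}$. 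Matching these two elementary operations at each step propagates the isomorphism from the initial chamber over to $C_\theta$.

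The main obstacle is precisely this last matching: showing that a single step of quiver mutation transforms the torus-weight basis of $\III$ together with the actions of all the arrows exactly as the corresponding combinatorial move does in $G^\nu_{\sigma,\lambda,\theta}$. This is the explicit combinatorial description of the derived equivalence that the author defers to Section \ref{sec_7}; it ultimately requires careful bookkeeping of torus weights near each torus-fixed point of $Y_\sigma$ before and after the mutation, together with a verification that the arrow-shift rules of Definition \ref{defn_cryctal_rep} for $A_{\sigma\circ\theta\circ\theta_i}$ are consistent with the action produced by the corresponding mutation of the quiver with potential.
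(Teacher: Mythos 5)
Your plan coincides with the paper's own proof: the paper likewise combines a base-case identification that exploits the tilting bundle being a direct sum of line bundles on the toric charts $U_j$ (so that the weight basis of $\III$ is read off from monomial ideals and the arrow actions from the explicit maps between line bundles) with an induction over the mutation sequence, and the ``main obstacle'' you isolate is exactly what Proposition \ref{prop_mutation} carries out by an explicit basis computation of the mutated module $\mu_iM(\Vmin^{[i]})$. The only cosmetic difference is that for nonempty $(\unu,\ulam)$ the paper anchors the induction near the geometric chamber, where the grand-state crystal visibly splits into $L$ three-dimensional Young diagrams matching the local monomial bases of $\II$, rather than at $C_{\mathrm{id}}$ where $\III=H^0_{\moda}(\II)$ may differ from $\II$ and the crystal does not decompose so directly.
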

The proof of this proposition is given in \S \ref{sec_7}.
\begin{rem}
The $A_{\sigma\circ\theta}$-module $M(\Vmin^{\type})$ coincides with $M^{\mathrm{max}}_{\sigma,\lambda,\nu,\theta}$ defined in \cite[\S 3.3.3]{open_3tcy}. 
\end{rem}

\begin{NB}
For a subset $C\subset P(\Vmin)$, let $M(C)\subset M(\Vmin)$ be a subspace spanned by the elements in $C$. 
\begin{lem}
Let $(\III \twoheadrightarrow V)\in \MM^T$ be a torus fixed point. 
Then the kernel of the map is described as $M(C)$ for a subset $C\in P(\Vmin)$.
\end{lem}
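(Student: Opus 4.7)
The plan is to reduce the statement to a weight-space decomposition argument, using the identification of $\III$ with the combinatorial module $M(\Vmin^{\type})$ from Proposition~\ref{prop_sec_6}. First, I would fix this isomorphism and verify that the distinguished basis $\{p(n,x,y)\}_{(n,x,y)\in P(\Vmin)}$ simultaneously diagonalizes the torus action. The torus $T$ acting on $Y_\sigma$ lifts, via the derived equivalence of Theorem~\ref{thm_perv} together with the tilting that defines $\catAinfzeta$, to an action on $A_{\sigma\circ\theta}$-modules and in particular on $\III$. Under the crystal description each basis element $p(n,x,y)$ corresponds to a box in the $L$-tuple of $3$-dimensional Young diagrams attached to the vertices of the toric graph and is therefore a $T$-weight vector; moreover the weights of distinct triples $(n,x,y)\in P(\Vmin)$ are pairwise distinct (after restriction to a generic one-parameter subgroup if needed), so every weight space of $\III$ is at most one-dimensional.

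Next, I would reinterpret $T$-fixed points of $\MMM$ as $T$-stable submodules of $\III$. By Definition~\ref{defn_pair}, two pairs $(V,s),(V',s')$ are equivalent precisely when $\ker(s)=\ker(s')$, so the points of $\MMM$ are in bijection with $A_{\sigma\circ\theta}$-submodules $K\subset\III$ with $\III/K\in\catAzeta$ of class $\vv$. The $T$-action on $\MMM$ is induced by $t\cdot[K]:=[t\cdot K]$, so $[K]$ is $T$-fixed if and only if $K$ is a $T$-stable $A_{\sigma\circ\theta}$-submodule of $\III$.

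Finally, a standard weight-space decomposition finishes the argument: any $T$-stable subspace of a $T$-representation whose weight spaces are one-dimensional equals the direct sum of the weight spaces it meets nontrivially. Applied to $K\subset\III=M(\Vmin)$, this gives $K=M(C)$ for $C:=\{p(n,x,y)\in P(\Vmin)\mid p(n,x,y)\in K\}$, and the finiteness $|P(\Vmin)\setminus C|<\infty$ follows from $\dim(\III/K)<\infty$. The main technical obstacle is the first step: propagating the torus equivariance through the derived equivalence of Theorem~\ref{thm_perv} and the tilting that produces $\catAinfzeta$, and then matching the resulting $T$-weights on $\III$ with the combinatorial positions $(n,x,y)$ labeling the basis elements of $M(\Vmin)$. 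This ultimately relies on the explicit combinatorial description of the derived equivalence developed in Section~\ref{sec_7}.
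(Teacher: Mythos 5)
Your proposal is correct and follows essentially the same route as the paper: the paper likewise chooses a one-parameter subgroup $\rho$ acting compatibly with the $T$-action on $\III\simeq M(\Vmin)$, observes that the elements of $P(\Vmin)$ are eigenvectors with pairwise distinct eigenvalues, and concludes that the ($T$-stable) kernel of a fixed point is spanned by a subset of $P(\Vmin)$. Your additional remarks on propagating equivariance through the derived equivalence and on identifying points of the moduli with kernels are just spelled-out versions of steps the paper leaves implicit.
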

\begin{proof}
\end{proof}
\end{NB}
\begin{prop}
Let $(\III \twoheadrightarrow V)\in \MM$ be a torus fixed point. 
Then the kernel of the map is described as $M(\V)$ for a transition $\V$.
\end{prop}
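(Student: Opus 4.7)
The plan is to combine the identification $\III \simeq M(\Vmin^{\type})$ from Proposition \ref{prop_sec_6} with an analysis of torus-invariant submodules of $M(\Vmin^{\type})$. First I observe that the basis $P(\Vmin^{\type})$ consists of torus weight vectors with pairwise distinct weights: each $p(n,x,y)$ acquires an explicit monomial weight from the toric structure of $Y_\sigma$ transported through Proposition \ref{prop_sec_6}. Consequently every torus-invariant subspace of $M(\Vmin^{\type})$ is spanned by a unique subset of $P(\Vmin^{\type})$.

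Next, a torus fixed point of $\MM$ admits a representative $s\colon \III \twoheadrightarrow V$ which is strictly torus equivariant, obtained by lifting the torus action from $\III$ to $V$ along $s$. The kernel $K=\ker(s)$ is then a torus-invariant $A_{\sigma\circ\theta}$-submodule of $M(\Vmin^{\type})$, so by the previous paragraph $K = M(D)$ for a unique subset $D \subset P(\Vmin^{\type})$. Finite dimensionality of $V$, which is built into Definition \ref{defn_pair}, forces $P(\Vmin^{\type})\setminus D$ to be finite; and the fact that $K$ is an $A$-submodule forces $D$ to be closed under the arrow actions $r_i$ and $h_j^\pm$ of Definition \ref{defn_cryctal_rep}.

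The remaining task is to show that such a cofinite, arrow-closed subset $D$ has the form $P(\V)$ for a transition $\V$ of type $(\type)$. I would define $\V(n) := \{(x,y)\in(\Z_{>0})^2 \mid (n,x,y)\notin D\}$ and verify, by unpacking Definition \ref{defn_cryctal_rep} case by case in the signs of $\sigma\circ\theta$ and $\ulam\circ\theta$, that closure of $D$ under the loop arrows $r_i$ forces each $\V(n)$ to be a genuine Young diagram, while closure under $h_j^\pm$ forces exactly the interlacing $\V(h-\ulam\circ\theta(h)/2)\overset{\sigma\circ\theta(h)}{\succ}\V(h+\ulam\circ\theta(h)/2)$ appearing in the definition of a transition. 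The asymptotic behavior $\V(n)=\nu_-$ for $n\ll 0$ and $\V(n)=\nu_+$ for $n\gg 0$ then follows from the corresponding behavior of $\Vmin^{\type}$ together with the finiteness of $P(\Vmin^{\type})\setminus D$. Thus $\V$ is a transition and $D = P(\V)$, yielding $K = M(\V)$.

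The main obstacle I anticipate is the sign-by-sign case analysis in the final paragraph: translating the four cases of $h_j^\pm$ in Definition \ref{defn_cryctal_rep} into the two flavors of $\pg$ versus $\mg$ requires careful tracking of $\sigma\circ\theta$ and $\ulam\circ\theta$. The underlying combinatorial dictionary is essentially developed in \cite[\S 3.3]{open_3tcy}, so the remaining bookkeeping should be technical but routine once the setup is in place.
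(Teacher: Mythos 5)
Your proposal is correct and takes essentially the same route as the paper: the paper likewise uses that the basis $P(\Vmin^{\type})$ of $\III\simeq M(\Vmin^{\type})$ consists of torus weight vectors with pairwise distinct weights, so the kernel of a fixed point is spanned by a subset of the basis, and then identifies the subsets spanning cofinite submodules with crystals, i.e.\ with $P(\V)$ for transitions $\V$. Your additional bookkeeping (cofiniteness from $\dim V<\infty$, closure under the arrows forcing the interlacing conditions) just fills in the step the paper compresses into ``we can verify that a subset gives a submodule if and only if it is a crystal.''
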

\begin{proof}
Take a one parameter subgroup $\rho\colon T\to \prod\mathrm{GL}\Bigl(\Bigl(\III\Bigr)_{\hspace{-2pt}i}\,\Bigr)$ such that $\rho(t)*\III=t\cdot\III$.
Each element in $P(\Vmin)$ gives an eigenvector for $\rho$ and the eigenvalues are distinct from each other. 
Hence the kernel is spanned by a subset of $P(\Vmin)$.
We can verify that a subset of $P(\Vmin)$ gives an $A_\sigma^\zeta$-submodule of $M(\Vmin)$ if and only if it is a crystal.
\end{proof}
\begin{defn}
For a transition $\V$ of Young diagram of type $(\type)$,
we define the weight $\mathbf{v}(\V)\in\Z^I$ of $\V$ by 
\[
\mathbf{v}(\V)_i:=\sharp\{p(n,x,y)\in P(\Vmin)\backslash P(\V)\mid n\equiv i\ (\mr{mod}\,L)\}.
\]
\end{defn}
\begin{defn}
\[
\mathcal{Z}^{\mathrm{crystal}}_{\type}(q_0,\ldots,q_{L-1}):=\sum_{\text{$\V\colon$of type $(\type)$}} (\mathbf{q}_\theta)^{\mathbf{v}(\V)}.
\]
\end{defn}
\begin{cor}\label{cor_ncdt_crystal}
\[
\mathcal{Z}^{\mathrm{E-ncDT}}_{\ztpair}(q_0,\ldots,q_{L-1})=\mathcal{Z}^{\mathrm{crystal}}_{\type}(q_0,\ldots,q_{L-1}).
\]
\end{cor}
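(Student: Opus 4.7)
The plan is to compute each graded piece $e(\MMM)$ by torus localization and then identify the fixed point contributions with crystals of type $(\type)$ by the preceding proposition.

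First I would set up the torus action. The three-dimensional torus $T$ acting on $Y_\sigma$ lifts through the derived equivalence of Theorem \ref{thm_perv} to an action on $A_\sigma$, and hence on $A_\sigma^\zeta$ via the mutations of \S \ref{subsec_15}. Since $\III$ is constructed canonically from the torus-equivariant object $\II$ by applying $H^0_{\catAinfzeta}$, it is naturally $T$-equivariant, so $T$ acts on the moduli $\MMM$ constructed in Theorem \ref{thm_moduli}. Because $\MMM$ is only locally of finite type a priori but each graded piece indexed by $\mathbf{v}$ is of finite type, one may apply the localization identity $e(\MMM)=e(\MMM^T)$ componentwise.

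Next I would identify the fixed locus. By the proposition just proved, every $T$-fixed pair $(\III \surj V)$ has kernel of the form $M(\V)$ for a transition $\V$ of type $(\type)$, and conversely Definition \ref{defn_cryctal_rep} shows that each such $M(\V)\subset M(\Vmin^{\type})\simeq\III$ is an $A_\sigma^\zeta$-submodule whose quotient lies in $\catAzeta$ once $|P(\Vmin)\setminus P(\V)|<\infty$. So the $T$-fixed set of $\MMM$ is in canonical bijection with the set of transitions $\V$ of type $(\type)$ whose weight vector equals $\mathbf{v}$. In particular the fixed points are isolated, and so $e(\MMM^T)$ equals the number of such transitions.

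Then I would verify the weight-matching. Since $[S_i^\zeta]=\alpha(\theta,i)$ by \eqref{eq_alpha}, the generating variable attached to a simple factor at vertex $i$ is $\mathbf{q}^{\alpha(\theta,i)}$. For a fixed point corresponding to $\V$, the quotient $V=\III/M(\V)$ has a basis indexed by $P(\Vmin)\setminus P(\V)$, and by Definition \ref{defn_cryctal_rep} the element $p(n,x,y)$ sits in the $\pi(n)$-th graded piece of the $A_\sigma^\zeta$-module structure; this is exactly the definition of $\mathbf{v}(\V)_i$. Consequently the monomial $(\mathbf{q}_\theta)^{\mathbf{v}}$ weighting $e(\MMM)$ matches the monomial $(\mathbf{q}_\theta)^{\mathbf{v}(\V)}$ attached to the corresponding crystal, and summing over all fixed points gives the stated identity.

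The main technical point I expect to need care is confirming isolatedness of the fixed locus, i.e.\ that no one-parameter family of quotients can be $T$-fixed: this follows from the argument in the proof of the previous proposition, since the weight decomposition of $\III=M(\Vmin^{\type})$ via the chosen cocharacter has one-dimensional weight spaces indexed by $P(\Vmin)$, so any $T$-fixed submodule is automatically spanned by a subset of this basis and thus is determined combinatorially.
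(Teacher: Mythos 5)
Your proposal is correct and follows essentially the same (implicit) argument as the paper: the corollary is deduced from the preceding proposition identifying $T$-fixed quotients of $\III$ with crystals $P(\V)$, together with torus localization of the Euler characteristic and the weight bookkeeping $[S_i^\zeta]=\alpha(\theta,i)$ from \eqref{eq_alpha}. No genuine gap.
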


\subsection{Crystal melting and vertex operators}\label{subsec_crystal_and_vo}
Let $\K:=\C[q_i^{-1},q_i]]$ be the ring of Laurent formal power series with variables $q_i$ ($i\in I$) and $\Pi$ be the set of Young diagrams. 
We define the {\it Fock space} $\fock$ by
\[
\fock:=\bigoplus_{\lambda\in\Pi}\K\cdot \lambda. 
\]
Let $\langle\,-\,|\,-\,\rangle$ be the $\K$-bilinear inner product under which $\{\mu\}$ are orthonormal. 

We will use the "bra-ket" notation: 
\[
\langle\, \mu'\,|\,A\,|\,\mu\,\rangle:=\langle\, \mu'\,|\,A\mu\,\rangle=\langle\, {}^{\mathrm{t}}\hspace{-2pt}A\mu'\,|\,\mu\,\rangle,
\]
where $A$ is an endomorphism of $\fock$.
\begin{defn}
For $p\in\K$, we define the {\it vertex operators} 
$\Gamma^\pm_\pm(p)\colon\fock\to\fock$ by 
\[
\Gamma^\pm_+(p)\mu:=\sum_{\mu'\overset{\pm}{\succ}\mu}p^{|\mu|-|\mu'|}\mu',\quad 
\Gamma^\pm_-(p)\mu:=\sum_{\mu'\overset{\pm}{\prec}\mu}p^{|\mu|-|\mu'|}\mu'.
\]
\end{defn}
\begin{lem}\textup{(see \cite[Lemma 3.3]{young-mckay})}\label{lem_commutator}
For $p,p'\in\K$, we have
\[
[\Gamma^{\varepsilon_1}_{\iota_1}(p_1),\Gamma^{\varepsilon_2}_{\iota_2}(p_2)]=(1-\varepsilon_1\varepsilon_2p_1^{\iota_1}p_2^{\iota_2})^{-\iota_1\varepsilon_1\varepsilon_2\delta_{\iota_1+\iota_2}}.
\]
\end{lem}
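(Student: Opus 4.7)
The plan is to compute the matrix elements of both sides of the identity in the basis of Young diagrams and match them. Since $\{\mu\}$ is an orthonormal basis of $\fock$, it suffices to show that for all $\mu, \lambda$, the coefficient $\langle\, \mu \,|\, \Gamma^{\varepsilon_1}_{\iota_1}(p_1)\Gamma^{\varepsilon_2}_{\iota_2}(p_2) \,|\, \lambda\,\rangle$ equals the corresponding matrix element of the swapped product times the scalar on the right-hand side. By definition each matrix element decomposes as a sum over intermediate diagrams $\nu$, weighted by $p_1^{|\mu|-|\nu|}$ and $p_2^{|\nu|-|\lambda|}$ (with signs dictated by the $\iota_i$), subject to interlacing conditions determined by the $(\varepsilon_i, \iota_i)$.

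The argument splits naturally by the value of $\iota_1 + \iota_2$. When $\iota_1 = \iota_2$ the right-hand side is $1$, so the claim is that the two vertex operators commute. Here both matrix elements sum over chains with two successive interlacings in the same direction, and I would construct a weight-preserving bijection $\nu \leftrightarrow \nu'$ between intermediate diagrams of the two products, where $|\nu| + |\nu'| = |\mu| + |\lambda|$ so that the $p_1$- and $p_2$-weights get swapped in a compatible way. This rests on the standard fact that composable horizontal strips (and likewise vertical strips) behave like a commutative monoid under the interlacing relation $\pg$ (resp.\ $\mg$), which reduces the identity to a symmetric function commutativity of Pieri type.

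When $\iota_1 = -\iota_2$, say $\iota_1 = +$ and $\iota_2 = -$, the two sets of chains no longer match: in one product the intermediate $\nu$ can grow larger than both $\lambda$ and $\mu$, while in the swapped product $\nu'$ is bounded inside the ``intersection'' of $\lambda$ and $\mu$. I expect the discrepancy to be parameterized by adjoining a strip of fixed shape $k$ times for $k \geq 0$, each contributing a factor of $\varepsilon_1\varepsilon_2 p_1^{\iota_1} p_2^{\iota_2}$. Summing this geometric series yields $(1 - \varepsilon_1\varepsilon_2 p_1^{\iota_1} p_2^{\iota_2})^{-1}$; the overall sign $-\iota_1\varepsilon_1\varepsilon_2$ in the exponent then comes from determining whether the ratio of matrix elements goes from ``swapped / original'' or its reciprocal.

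The main obstacle will be the sign and direction bookkeeping: there are four combinations of $(\varepsilon_1, \varepsilon_2)$, each reversing the sense of one or both interlacing relations $\pg$ and $\mg$. My plan is to treat one reference case in full (namely $\varepsilon_1 = \varepsilon_2 = +$, which is the classical Okounkov-Reshetikhin-Vafa identity already established in \cite{young-mckay}) and then deduce the remaining cases either by transposing all Young diagrams, which interchanges $\pg \leftrightarrow \mg$ and hence the signs $\varepsilon$, or by substituting $p_i \mapsto -p_i$ to absorb the remaining signs into the definitions. This symmetry reduction converts the four-case analysis into a single combinatorial identity of Cauchy type.
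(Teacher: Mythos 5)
The paper does not prove this lemma at all: it is quoted verbatim from \cite[Lemma 3.3]{young-mckay}, so there is no in-paper argument to compare against. Your matrix-element strategy is the standard combinatorial proof of these commutation relations (as in Okounkov--Reshetikhin--Vafa and in Young's paper), and the broad outline --- split on $\iota_1+\iota_2$, use a weight-preserving bijection of intermediate diagrams when the operators commute, and a geometric series when they do not --- is the right one.

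There is, however, one step that would fail as stated: the reduction of all four sign cases $(\varepsilon_1,\varepsilon_2)$ to the single reference case $(+,+)$. Transposing every Young diagram interchanges $\pg$ and $\mg$ and therefore flips \emph{both} signs simultaneously, giving only $(\varepsilon_1,\varepsilon_2)\leftrightarrow(-\varepsilon_1,-\varepsilon_2)$. The substitution $p_i\mapsto -p_i$ does not supply the missing identification: it rescales the weights $p^{|\mu|-|\mu'|}$ but cannot change \emph{which} $\mu'$ occur in the sum, and $\Gamma^+_\iota$ and $\Gamma^-_\iota$ are supported on different sets of diagrams (horizontal versus vertical strips). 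So the mixed case $\varepsilon_1\varepsilon_2=-1$ is a genuinely separate identity, and it behaves differently: for $\iota_1=-\iota_2$ the right-hand side is $(1+p_1^{\iota_1}p_2^{\iota_2})^{\pm 1}$, and the $+1$ exponent cannot arise from summing a geometric series $\sum_{k\ge 0}x^k$. The correct mechanism there is that when one of the two strips is horizontal and the other vertical, the discrepancy between the two orderings of the operators is a \emph{two-term} correction (at most one extra box can be toggled in the relevant corner), i.e.\ a factor $1+p_1^{\iota_1}p_2^{\iota_2}$ rather than $(1-p_1^{\iota_1}p_2^{\iota_2})^{-1}$. Similarly, in the case $\iota_1=\iota_2$, $\varepsilon_1\ne\varepsilon_2$, commutativity is not the symmetry of a skew Schur function in two variables (Bender--Knuth) but the re-sorting of a horizontal strip past a vertical strip --- also standard, but a different bijection from the one you invoke. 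With the mixed-$\varepsilon$ cases argued directly instead of by substitution, your proof goes through; as you anticipate, the remaining work is purely the sign and direction bookkeeping.
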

For $w'\in\Z^I$, we set
\[
(\Z^I)_{\leq w'}:=\{w\in\Z^I\mid w_i\leq w'_i \text{ for all $i\in I$}\}.
\]
\begin{lem}\textup{(see \cite{young-mckay})}\label{lem_schur}
Let $J$ be a countable set and $w\colon J\to \Z^I$ be a map such that $w^{-1}((\Z^I)_{\leq w'})$ is finite for any $w'\in\Z^I$. 
Put $p_j:=\mathbf{q}^{w(j)}$. 
Note that for any map $\varepsilon\colon J\to\{\pm\}$ the operator $\prod_{j\in J}\Gamma^{\varepsilon(j)}_\pm(p_j)$ is well-defined.
Then we have
\begin{align*}
&\Bigl\langle\, \mu'\,\Big|\,\prod\Gamma^+_-(p_i)\,\Big|\,\mu\,\Bigr\rangle=\Bigl\langle\, \mu\,\Big|\,\prod\Gamma^+_+(p_i)\,\Big|\,\mu'\,\Bigr\rangle=s_{(\mu'\backslash\mu)}(p_i),\\
&\Bigl\langle\, \mu'\,\Big|\,\prod\Gamma^-_-(p_i)\,\Big|\,\mu\,\Bigr\rangle=\Bigl\langle\, \mu\,\Big|\,\prod\Gamma^-_+(p_i)\,\Big|\,\mu'\,\Bigr\rangle=s_{({}^{\mathrm{t}}\hspace{-1pt}\mu'\backslash{}^{\mathrm{t}}\hspace{-1pt}\mu)}(p_i)
\end{align*}
where $s_{(\mu'\backslash\mu)}$ and $s_{({}^{\mathrm{t}}\hspace{-1pt}\mu'\backslash{}^{\mathrm{t}}\hspace{-1pt}\mu)}$ denote the skew Schur functions.
\end{lem}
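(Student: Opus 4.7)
The plan is to reduce the four identities to essentially a single combinatorial one, namely the tableau expansion of a skew Schur function as a sum over chains of partitions related by horizontal strips. First I would handle the convergence question: by the hypothesis on $w$, for any fixed target weight $w'\in\Z^I$ only finitely many tuples of ``jumps'' summing to $w'$ can occur among the $p_j=\mathbf{q}^{w(j)}$, so when we expand $\prod_j\Gamma^{\varepsilon(j)}_\pm(p_j)$ and extract the matrix element $\langle\mu'|\cdot|\mu\rangle$, the coefficient of each $\mathbf{q}^{w'}$ is a finite sum. Hence the products are well-defined in the ring $\K$, and we may treat them formally.

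Next I would reduce the number of cases. Directly from the definitions,
\[
\langle\mu'\,|\,\Gamma^+_-(p)\,|\,\mu\rangle=\langle\mu\,|\,\Gamma^+_+(p)\,|\,\mu'\rangle
\]
because both are $p^{|\mu|-|\mu'|}$ when $\mu\pg\mu'$ and zero otherwise. Applying this at each factor and reversing the product shows that the ``$\Gamma^+_+$'' identity is equivalent to the ``$\Gamma^+_-$'' identity (with the roles of $\mu$ and $\mu'$ swapped). Similarly for the $-$ subscript pair. Finally the identities involving $\Gamma^-_\pm$ follow from those for $\Gamma^+_\pm$ by the $\K$-linear involution $\mu\mapsto {}^{\mathrm t}\hspace{-1pt}\mu$ on $\fock$, which intertwines $\Gamma^+_\iota$ with $\Gamma^-_\iota$ since $\mu\pg\mu'$ if and only if ${}^{\mathrm t}\hspace{-1pt}\mu\mg{}^{\mathrm t}\hspace{-1pt}\mu'$.

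Thus everything reduces to showing
\[
\Bigl\langle\,\mu'\,\Big|\,\prod_{j}\Gamma^+_-(p_j)\,\Big|\,\mu\,\Bigr\rangle=s_{\mu\backslash\mu'}(p_j).
\]
Expanding each factor by definition, the left hand side is the sum over all chains $\mu=\mu^{(k)}\pg\mu^{(k-1)}\pg\cdots\pg\mu^{(0)}=\mu'$ of $\prod_{j}p_j^{|\mu^{(j)}|-|\mu^{(j-1)}|}$. Since $\mu^{(j)}\pg\mu^{(j-1)}$ is exactly the condition that $\mu^{(j)}/\mu^{(j-1)}$ is a horizontal strip, such chains are in bijection with semi-standard tableaux of shape $\mu/\mu'$ with entries labelled by $j$; the weight of the chain is precisely the monomial attached to the tableau. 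Summing over all tableaux gives the combinatorial (Pieri/tableau) definition of the skew Schur function $s_{\mu\backslash\mu'}(p_j)$, and likewise for countably infinite sequences in the obvious limiting sense permitted by the finiteness condition on $w$.

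The main obstacle is not conceptual but bookkeeping: one must be careful with the direction of the product, the sign of the exponent $|\mu|-|\mu'|$, and the convention for the skew symbol $\mu\backslash\mu'$; and one must verify that the formal infinite product really does coincide, matrix-element by matrix-element, with the skew Schur function of the infinite alphabet $\{p_j\}$. Both are routine given the finiteness assumption, and no analytic input is needed; once that is checked, the three remaining identities follow mechanically from the adjoint and transposition reductions above.
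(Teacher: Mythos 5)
The paper offers no proof of this lemma at all --- it is quoted directly from \cite{young-mckay} --- so your argument is supplying a proof where the text has only a citation. Your overall strategy is the standard one and is sound: expand $\prod_j\Gamma^+_-(p_j)$ into a sum over chains $\mu=\mu^{(k)}\pg\cdots\pg\mu^{(0)}=\mu'$, identify the interlacing condition with horizontal strips and hence chains with semistandard tableaux of shape $\mu/\mu'$, recognize the tableau generating function as the combinatorial skew Schur function, and transfer the $\Gamma^-_\pm$ cases by the transposition involution (using that $\mu\pg\mu'$ iff $\tenchi\mu\mg\tenchi\mu'$). The finiteness hypothesis on $w$ is indeed exactly what makes both the infinite operator product and the infinite-alphabet skew Schur function well defined coefficient by coefficient.

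The one step that does not survive a literal check against the paper's definitions is the adjointness reduction. With $\Gamma^\pm_+(p)\mu=\sum_{\mu'\succ\mu}p^{|\mu|-|\mu'|}\mu'$ and $\Gamma^\pm_-(p)\mu=\sum_{\mu'\prec\mu}p^{|\mu|-|\mu'|}\mu'$ as defined, one gets $\langle\,\mu'\,|\,\Gamma^+_-(p)\,|\,\mu\,\rangle=p^{|\mu|-|\mu'|}$ but $\langle\,\mu\,|\,\Gamma^+_+(p)\,|\,\mu'\,\rangle=p^{|\mu'|-|\mu|}$ (both supported on $\mu\pg\mu'$), so the transpose of $\Gamma^+_-(p)$ is $\Gamma^+_+(p^{-1})$ rather than $\Gamma^+_+(p)$; your assertion that ``both are $p^{|\mu|-|\mu'|}$'' is not what the definition yields. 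Consequently $\langle\,\mu\,|\prod\Gamma^+_+(p_i)|\,\mu'\,\rangle$ comes out as the skew Schur function in the inverted variables $p_i^{-1}$ --- which is presumably why the paper introduces the involution $f\mapsto f^*$ and inserts $s^*$ factors in Lemma \ref{lem_3.16}. This discrepancy is arguably inherited from the statement itself (which, as you note, also writes the skew shape as $\mu'\backslash\mu$ where only $\mu\backslash\mu'$ can be meant for $\prod\Gamma^+_-$), but your proof should either record the inversion explicitly or fix the sign convention in the definition of $\Gamma^\pm_+$ before invoking adjointness; as written, the second and fourth equalities of the lemma are only obtained up to $p_i\mapsto p_i^{-1}$.
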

\begin{NB}
\begin{lem}
Let $\iota\colon \{1,\ldots,B\}\to \{\pm\}$ be a map and $J_b$ ($1\geq b\geq B$) be a countable set.
Take a map $w\colon \sqcup_{j\in J'}J_j\to \Z$ as above and put $p_i:=q^{w(i)}$. Then we have
\begin{align*}
&\Biggl(\,\prod_{j\in J_1}\Gamma^{\varepsilon(j)}_{\iota(1)}(q_j)\Biggr)
\cdots 
\Biggl(\,\prod_{j\in J_B}\Gamma^{\varepsilon(j)}_{\iota(B)}(q_j)\Biggr)\\
&=
\prod_{1\leq b<b'\leq B}\Biggl(\,\prod_{j\in J_b}\prod_{j'\in J_{b'}}
(1-\varepsilon(j)\varepsilon(j')pp')^{\varepsilon(j)\varepsilon(j')}
\Biggr)
\cdot\prod_{b\in \iota^{-1}(+)}\Biggl(\,\prod_{j\in J_b}\Gamma^{\varepsilon(j)}_{+}(q_j)\Biggr)
\cdot
\prod_{b\in \iota^{-1}(-)}\Biggl(\prod_{j\in J_b}\Gamma^{\varepsilon(j)}_{-}(q_j)\Biggr).
\end{align*}
\end{lem}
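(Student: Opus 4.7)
The plan is to prove the identity by a normal-ordering argument: I will use Lemma~\ref{lem_commutator} repeatedly to move every block with $\iota(b)=+$ to the left of every block with $\iota(b)=-$, collecting scalar factors along the way. The starting observation is that the right-hand side of Lemma~\ref{lem_commutator} contains the Kronecker factor $\delta_{\iota_1+\iota_2}$, which vanishes whenever $\iota_1=\iota_2$; consequently two vertex operators of the same $\iota$-type commute exactly. This has two consequences I will use freely. First, operators within a single block $J_b$ can be permuted arbitrarily, so the products indexed by countable sets $J_b$ are unambiguous. Second, blocks of the same $\iota$-type slide past each other without producing any scalar, so the relative order of blocks within $\iota^{-1}(+)$ (and within $\iota^{-1}(-)$) in the stated right-hand side can be taken to be the one inherited from $\{1,\dots,B\}$.

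I would then argue by induction on the number of \emph{inversions}, by which I mean the pairs $(b,b')$ with $b<b'$, $\iota(b)=-$, $\iota(b')=+$. The base case has no inversions, so the left-hand side is already in the factorized form on the right, and every scalar factor in the prefactor reduces to $1$: pairs with $\iota(b)=\iota(b')$ give exponent $0$ via the $\delta_{\iota_1+\iota_2}$ vanishing, while the remaining pairs $(\iota(b),\iota(b'))=(+,-)$ contribute trivially under the convention fixed by Lemma~\ref{lem_commutator}. For the inductive step I pick an adjacent pair of blocks $(J_b,J_{b+1})$ with $\iota(b)=-$, $\iota(b+1)=+$, swap them by invoking Lemma~\ref{lem_commutator} for every pair $(j,j')\in J_b\times J_{b+1}$, and pull the resulting scalars $(1-\varepsilon(j)\varepsilon(j')q_jq_{j'})^{\varepsilon(j)\varepsilon(j')}$ out to the front of the product. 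What remains has strictly fewer inversions, so the induction hypothesis applies and produces exactly the remaining pairs $b<b'$ in the stated prefactor.

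The main technical obstacle is convergence and the legality of rearranging infinite products. The operator products converge on the Fock space because, by the hypothesis on $w$, only finitely many $\Gamma^{\varepsilon(j)}_\pm(q_j)$ act non-trivially on a given $\mu\in\fock$ at any prescribed total weight; this is the same finiteness that makes the hypothesis of Lemma~\ref{lem_schur} valid and is exactly what the lemma requires. The scalar products $\prod_{j\in J_b}\prod_{j'\in J_{b'}}(1-\varepsilon(j)\varepsilon(j')q_jq_{j'})^{\varepsilon(j)\varepsilon(j')}$ lie in $\K$ for the same reason: expanding each factor as a formal power series and collecting by the $I$-graded weight shows that each graded component is a finite sum. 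Once these convergence statements are in hand, rearranging scalars past operators is legitimate because scalars are central, and the infinite identity follows from its finite truncations, each of which is a direct iteration of Lemma~\ref{lem_commutator}. I expect the hardest point in the write-up to be setting up this grading bookkeeping cleanly so that the induction on inversions is carried out only at each fixed weight level, where everything reduces to a finite calculation.
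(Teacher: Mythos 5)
The paper never actually proves this lemma---it sits inside a draft \textup{NB} block that is excluded from the compiled text and is stated without proof---so there is no official argument to compare with. Your normal-ordering strategy (same-type operators commute exactly by the $\delta_{\iota_1+\iota_2}$ factor in Lemma~\ref{lem_commutator}, induct on the number of transposed mixed-type pairs, and control all infinite products degreewise using the finiteness hypothesis on $w$) is the natural one and is what the paper implicitly does when it ``combines Lemma~\ref{lem_commutator}'' with the bra-ket expressions in \S\ref{subsec_computation}.

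There is, however, a genuine gap at your base case, and it reflects a defect in the statement that you need to confront rather than wave away. Take $B=2$, $J_1=\{1\}$, $J_2=\{2\}$, $\iota(1)=+$, $\iota(2)=-$, $\varepsilon\equiv+$. The left-hand side is already in the claimed normal order, so no commutator is ever invoked; yet the stated prefactor runs over \emph{all} pairs $b<b'$ and contributes the nontrivial scalar $(1-p_1p_2)$ for this pair. Your claim that such $(+,-)$ pairs ``contribute trivially under the convention fixed by Lemma~\ref{lem_commutator}'' is not correct: the $\delta_{\iota_1+\iota_2}$ factor kills only same-type pairs, and for opposite types the lemma yields a genuinely nontrivial scalar---but that scalar is produced only when the two operators are actually transposed. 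The identity your induction proves has the prefactor restricted to the inversion pairs $\{(b,b')\mid b<b',\ \iota(b)=-,\ \iota(b')=+\}$, and, reading off the commutator, the argument of each factor is $p_j^{\iota(b)}p_{j'}^{\iota(b')}$ rather than $p_jp_{j'}$. So either you restate the lemma accordingly, or your base case is simply false. The inductive step and the convergence discussion are otherwise sound, granted the finiteness hypothesis of Lemma~\ref{lem_schur}; you should only add that convergence of the scalar products $\prod(1-\cdots)^{\pm1}$ in $\K$ also requires that each fixed monomial arise from only finitely many pairs $(j,j')$, which must be extracted from the hypothesis on $w$ and is not automatic.
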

\end{NB}
\begin{NB}
We take a map $w=w(\theta,\ulam)\colon \Zh\to \Z^I$ such that 
\[
\mathbf{q}^{\ulam\circ\theta(n+1/2)\cdot w(n+1/2)-\ulam\circ\theta(n-1/2)\cdot w(n-1/2)}=q_{\pi(n)}
\]
and define 
\[
w(\Vmin):=\sum_{h}\Bigl|\Vmin(h-\ulam\circ\theta(h)/2)\backslash\Vmin(h+\ulam\circ\theta(h)/2)\Bigr|\cdot w(h).
\]
\end{NB}

Let $f(h)$ ($h\in\Zh$) be a sequence of operators.
If the composition of the operator
\[
\cdots \circ f(\theta^{-1}(h-1))\circ f(\theta^{-1}(h))\circ f(\theta^{-1}(h+1))\circ \cdots
\]
is well-defined, we denote this by 
\[
\prod_{h\in\Zh}^{\theta}f(h).
\]
We set $\alpha_h:=\alpha_{[1/2,h]}$ and $q_h:=\mathbf{q}^{\alpha_h}$ and define  the monomial 
\[
\mathbf{q}(\type):=\prod_{h\in\Zh}(q_h)^{|\Vmin(h-1/2)|-|\Vmin(h+1/2)|}.
\]
The following claim is clear from the definitions:
\begin{prop}\label{prop_crystal_vo}
\[
\mathcal{Z}^{\mathrm{crystal}}_{\type}(q_0,\ldots,q_{L-1})=
\Bigl\langle\, \nu_-\,\Big|\,\prod^{\theta}_{h\in\Zh}\Gamma^{\sigma(h)}_{\lambda(h)}(q_h)\,\Big|\,\nu_+\,\Bigr\rangle\cdot \mathbf{q}(\type).
\]
\end{prop}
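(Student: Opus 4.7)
The plan is to unpack both sides and identify them term-by-term; the identity is a direct consequence of the definitions of the vertex operators and of a transition. First I would expand the right-hand matrix element by inserting a complete set of basis vectors $\{|\mu\rangle\}_{\mu\in\Pi}$ between every pair of adjacent factors in the ordered product $\prod_{h}^{\theta}\Gamma^{\sigma(h)}_{\lambda(h)}(q_h)$. By the definition of $\Gamma^{\pm}_{\pm}$, each factor sends a basis vector $|\mu\rangle$ to a sum over Young diagrams $\mu'$ satisfying the corresponding $\pg$, $\mg$, $\pl$ or $\ml$ relation with $\mu$, weighted by $q_h^{|\mu|-|\mu'|}$. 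Thus a nonzero term in the expansion is labelled by a sequence $(\V(n))_{n\in\Z}$ of Young diagrams placed in the slots between successive operators, and the bra/ket boundary conditions force $\V(n)=\nu_-$ for $n\ll 0$ and $\V(n)=\nu_+$ for $n\gg 0$. The comparability constraint imposed on adjacent $\V(n)$'s is exactly $\V(h-\ulam\circ\theta(h)/2)\overset{\sigma\circ\theta(h)}{\succ}\V(h+\ulam\circ\theta(h)/2)$, which is precisely the condition defining a transition of type $(\type)$. Hence the sum over nonzero terms is in bijection with transitions.

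Next I would match the monomial weights. The factor coming from the operator at position $h$ is $q_h^{|\V(h-\ulam\circ\theta(h)/2)|-|\V(h+\ulam\circ\theta(h)/2)|}$. Taking the product over $h\in\Zh$ and rewriting differences of sizes as a sum over boxes of $P(\Vmin)\setminus P(\V)$ (each box of $P(\Vmin)\setminus P(\V)$ contributing to exactly one such difference), the total $\mathbf{q}$-exponent becomes a linear combination indexed by such boxes. Using $q_h=\mathbf{q}^{\alpha_{[1/2,h]}}$ and $\alpha(\theta,i)=\alpha_{[\theta(n-1/2),\theta(n+1/2)]}$ for $\pi(n)=i$, the contribution of a box $p(n,x,y)\in P(\Vmin)\setminus P(\V)$ with $\pi(n)=i$ collapses to $\mathbf{q}^{\alpha(\theta,i)}$, so the product equals $(\mathbf{q}_\theta)^{\mathbf{v}(\V)}$ divided by the contribution of the minimal transition, which by definition is $\mathbf{q}(\type)$. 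Multiplying the matrix element by $\mathbf{q}(\type)$ therefore yields $\sum_{\V}(\mathbf{q}_\theta)^{\mathbf{v}(\V)}=\mathcal{Z}^{\mathrm{crystal}}_{\type}$, proving the identity.

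The only genuinely substantive step is the telescoping in the second paragraph: verifying that the products of $q_h^{|\V(\cdot)|-|\V(\cdot)|}$ reorganise, via the definitions of $\alpha_{[h,h']}$ and $\alpha(\theta,i)$, into $(\mathbf{q}_\theta)^{\mathbf{v}(\V)}/\mathbf{q}(\type)$. The identification of transitions with nonzero terms in the matrix element is entirely formal. No convergence issue arises because the ordered product $\prod_{h}^{\theta}$ is defined so that the composition is well-defined in the sense used in Lemma \ref{lem_schur}, and the local finiteness of $P(\Vmin)\setminus P(\V)$ ensures that only finitely many operators contribute a Young diagram other than $\Vmin$.
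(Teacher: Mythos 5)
Your argument is correct and is exactly the unpacking the paper has in mind: the paper states this proposition with no written proof (``clear from the definitions''), and your expansion of the matrix element by inserting complete sets of intermediate states, the identification of the nonzero terms with transitions via the interlacing conditions built into $\Gamma^{\pm}_{\pm}$, and the telescoping of the $q_h$-exponents into $(\mathbf{q}_\theta)^{\mathbf{v}(\V)}$ after normalizing by $\mathbf{q}(\type)$ is precisely that intended argument. The only caveat is notational rather than mathematical: the paper's conventions for the $\theta$-ordered product leave a $\theta$ versus $\theta^{-1}$ ambiguity in which parameter $q_h$ sits at which physical position, and you have (reasonably) adopted the reading under which the identity holds.
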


\subsection{Computation of the generating function}\label{subsec_computation}
For a symmetric function $f=f(p_1,p_2,\ldots)$, let $f^*$ be the symmetric function given by $f^*(p_1,p_2,\ldots)=f(p_1^{-1},p_2^{-1},\ldots)$. 
For a subset $S\subset \Zh$, let $f(S\sss;\sss\mathbf{q})$ denote the symmetric function given by substituting $\{q_h\mid h\in S\}$ for $f$. 
We set 
\[
S_{\ulam}^{\,\iota}:=\{h\in\Zh\mid\ulam(h)=\iota\},\quad
S_{\sigma,\ulam}^{\,\varepsilon,\iota}:=\{h\in\Zh\mid\sigma(h)=\varepsilon,\, \ulam(h)=\iota\}.
\]
The following lemma is a direct consequence of Lemma \ref{lem_commutator}:
\begin{lem}\label{lem_3.16}
\begin{align*}
&\Bigl\langle\, \nu_-\,\Big|\,
\prod_{h\in S_{\ulam}^{+}}\Gamma^{+}_{\lambda(h)}(q_h)\cdot
\prod_{h\in S_{\ulam}^{-}}\Gamma^{-}_{\lambda(h)}(q_h)
\,\Big|\,\nu_+\,\Bigr\rangle=\\
&\sum_{{\nu_-\supseteq \nu_1\supseteq \nu_2\subseteq\nu_3\subseteq\nu_+}}
s_{\nu_-\backslash\nu_1}\bigl(S^{+,+}_{\sigma,\ulam}\sss;\sss\mathbf{q}\sss\bigr)\cdot 
\tenchi s_{\nu_1\backslash\nu_2}^*\bigl(S^{-,+}_{\sigma,\ulam}\sss;\sss\mathbf{q}\sss\bigr)\cdot 
s_{\nu_3\backslash\nu_2}\bigl(S^{+,-}_{\sigma,\ulam}\sss;\sss\mathbf{q}\sss\bigr)\cdot 
\tenchi s_{\nu_+\backslash\nu_3}^*(S^{-,-}_{\sigma,\ulam}\sss;\sss\mathbf{q}\sss\bigr)
\end{align*}
\end{lem}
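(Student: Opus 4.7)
The plan is to split each of the two vertex-operator blocks on the left-hand side further according to the value of $\sigma$, insert the identity three times to introduce three intermediate Young diagrams, and then evaluate each of the four resulting matrix elements using Lemma \ref{lem_schur}.

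First I would verify that within a fixed subscript $\iota\in\{+,-\}$, any two vertex operators $\Gamma^{\varepsilon_1}_\iota(q_h)$ and $\Gamma^{\varepsilon_2}_\iota(q_{h'})$ commute. Indeed, Lemma \ref{lem_commutator} gives the commutation constant $(1-\varepsilon_1\varepsilon_2 q_h^{\iota_1}q_{h'}^{\iota_2})^{-\iota_1\varepsilon_1\varepsilon_2\delta_{\iota_1+\iota_2}}$; when $\iota_1=\iota_2$, the Kronecker delta $\delta_{\iota_1+\iota_2}$ vanishes, the exponent is zero, and the constant equals $1$. Consequently the raising block reorganizes as
\[
\prod_{h\in S_{\ulam}^{+}}\Gamma^{\sigma(h)}_{+}(q_h)=\Bigl(\prod_{h\in S^{+,+}_{\sigma,\ulam}}\Gamma^{+}_{+}(q_h)\Bigr)\Bigl(\prod_{h\in S^{-,+}_{\sigma,\ulam}}\Gamma^{-}_{+}(q_h)\Bigr),
\]
and similarly the lowering block splits into its $S^{+,-}_{\sigma,\ulam}$ and $S^{-,-}_{\sigma,\ulam}$ pieces.

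Next I would insert three copies of the identity operator $\sum_\mu|\mu\rangle\langle\mu|$ between the four sub-blocks; this introduces three intermediate diagrams $\nu_1,\nu_2,\nu_3$ and turns the original bracket into a sum over $(\nu_1,\nu_2,\nu_3)$ of a product of four matrix elements, each involving only a single sub-block. Lemma \ref{lem_schur} then evaluates each of these matrix elements as a (possibly transposed) skew Schur function: the two $\Gamma^{+}_{\pm}$ blocks produce $s_{\nu_-\backslash\nu_1}$ and $s_{\nu_3\backslash\nu_2}$ in the variables $\{q_h\mid h\in S^{+,\pm}_{\sigma,\ulam}\}$, while the two $\Gamma^{-}_{\pm}$ blocks produce the transposed Schur functions $\tenchi s^*_{\nu_1\backslash\nu_2}$ and $\tenchi s^*_{\nu_+\backslash\nu_3}$ in the variables $\{q_h\mid h\in S^{-,\pm}_{\sigma,\ulam}\}$. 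The nonvanishing of each factor forces the chain of inclusions $\nu_-\supseteq\nu_1\supseteq\nu_2\subseteq\nu_3\subseteq\nu_+$: the raising blocks (subscript $+$) move the intermediate diagrams upward toward $\nu_-$, and the lowering blocks (subscript $-$) move them downward toward $\nu_+$.

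The main technical issue I anticipate is bookkeeping of the transpose-and-star conventions in $\tenchi s^*$: one must check that the expression arising from a $\Gamma^{-}_{\pm}$ block via Lemma \ref{lem_schur} matches exactly the $\tenchi s^*_{\cdot\backslash\cdot}$ appearing on the right-hand side, with the correct direction of the skew. Once those conventions are pinned down, the lemma becomes a purely mechanical consequence of Lemmas \ref{lem_commutator} and \ref{lem_schur}, with no further input required.
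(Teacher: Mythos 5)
Your argument is correct and is exactly the one the paper intends: the paper offers no written proof beyond declaring the lemma ``a direct consequence of Lemma \ref{lem_commutator}'', and your steps --- observing that operators with equal subscript commute with trivial constant because $\delta_{\iota_1+\iota_2}=0$, splitting each block by the value of $\sigma$, inserting three resolutions of the identity, and evaluating the four resulting matrix elements via Lemma \ref{lem_schur} --- are precisely what that phrase stands for. The bookkeeping of the $\tenchi s^*$ conventions that you flag is indeed the only remaining (purely notational) work.
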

For $\alpha=\alpha_{[h,h']}\in \Delta$, we set $\sigma(\alpha)=-\sigma(h)\sigma(h')$ and 
\[
\alpha_0:=\sharp\{m\in Z\mid h<mL<h'\}.
\]
We can easily verify the following:
\begin{lem}\label{lem_3.17}
For $\alpha\in\Delta^{\mathrm{re}}$ and for any $\ulam$, we have
\[
\sharp\{(h,h')\in(\Zh)^2\mid \alpha_{[h,h']}=\alpha,\ \ulam(h)=-,\ \ulam(h')=+\}=\alpha_0.
\]
\end{lem}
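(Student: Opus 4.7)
The plan is to reduce the count of pairs $(h,h')$ to a one‑dimensional count on Maya diagrams. Since $\alpha\in\Delta^{\mathrm{re}}$, after possibly replacing $\alpha$ by $-\alpha$ (and swapping the roles of $h$ and $h'$) I may assume $\alpha$ is a positive real root. Fix a representative $(h_0,h_0')\in(\Zh)^2$ with $h_0<h_0'$ and $\alpha_{[h_0,h_0']}=\alpha$. Because the map $(h,h')\mapsto\alpha_{[h,h']}$ is invariant precisely under the diagonal shift $(h,h')\mapsto(h+L,h'+L)$, and because a real root admits exactly one such orbit, the set of pairs with $\alpha_{[h,h']}=\alpha$ is the single orbit $\{(h_0+kL,h_0'+kL):k\in\Z\}$, parameterised by $k\in\Z$.

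Write $h_0=n_0L+\pi(h_0)$ and $h_0'=n_0'L+\pi(h_0')$ with $\pi(h_0),\pi(h_0')\in(0,L)$. Since the fractional parts $\pi(h_0)/L$ and $\pi(h_0')/L$ lie in $(0,1)$, the integers strictly between $h_0/L$ and $h_0'/L$ are exactly $n_0+1,\ldots,n_0'$, and so $\alpha_0=n_0'-n_0$. Setting $\mu:=\lambda^{(\pi(h_0))}$ and $\mu':=\lambda^{(\pi(h_0'))}$ viewed as Maya diagrams, and using $\ulam(h)=\lambda^{(\pi(h))}\bigl((h-\pi(h))/L+1/2\bigr)$, I obtain $\ulam(h_0+kL)=\mu(n_0+k+1/2)$ and $\ulam(h_0'+kL)=\mu'(n_0'+k+1/2)$, so the lemma is reduced to the identity
\[
\sharp\bigl\{k\in\Z\,\bigm|\,\mu(n_0+k+1/2)=-,\ \mu'(n_0'+k+1/2)=+\bigr\}=d,
\]
with $d:=n_0'-n_0$, for any pair of Maya diagrams $\mu,\mu'$ coming from finite Young diagrams.

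I would establish this by first checking the baseline case $\mu=\mu'=\emptyset$, where $\mu(b)=-\Leftrightarrow b<0$ and $\mu'(b)=+\Leftrightarrow b>0$ force $-d<k<0$, giving precisely $d$ integer solutions. The general case should follow by induction on the total number of boxes: each time a single box is added to $\lambda^{(\pi(h_0))}$ or $\lambda^{(\pi(h_0'))}$ one performs the elementary swap $(\mu(b),\mu(b+1))=(-,+)\mapsto(+,-)$ at some position in the corresponding Maya diagram, and one must verify that the count is preserved. The main obstacle is this swap invariance: the swap alters the count at exactly the two values of $k$ that map to $b$ and $b+1$, and one must check that the two local changes cancel against one another. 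I expect this to follow from a short case analysis on the values of the other, untouched Maya diagram at the two relevant shifted positions, combined with the charge-zero asymptotics of Maya diagrams of Young diagrams, which ensures that the net count is pinned to the empty-diagram baseline.
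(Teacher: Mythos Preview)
Your reduction to a single orbit $\{(h_0+kL,h_0'+kL):k\in\Z\}$ and the identification $\alpha_0=n_0'-n_0=:d$ are correct, and the baseline $\mu=\mu'=\emptyset$ checks out. The genuine gap is the inductive step: the swap invariance you are hoping for is false. If you add a box to $\mu$ by turning $(\mu(b),\mu(b+1))$ from $(-,+)$ into $(+,-)$, then the count $\sharp\{a\in\Zh:\mu(a)=-,\ \mu'(a+d)=+\}$ changes by exactly
\[
[\mu'(b+1+d)=+]-[\mu'(b+d)=+],
\]
which has no reason to vanish. Concretely, take $d=0$, $\mu'=\emptyset$ and pass from $\mu=\emptyset$ to $\mu=(1)$ (swap at $b=-1/2$): the count jumps from $0$ to $1$, the new contribution coming from $a=1/2$. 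So the two local changes you anticipate do \emph{not} cancel, and no case analysis on the untouched diagram can repair this.

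In fact this shows the identity you reduced to is false for general $\ulam$ (for $L=2$, $\alpha=\alpha_1$, $\lambda^{(1/2)}=\square$, $\lambda^{(3/2)}=\emptyset$ one computes the left-hand side of the lemma to be $1$ while $\alpha_0=0$), so the statement as literally written cannot hold. What \emph{is} invariant under box-adding, and hence equal to its value $d=\alpha_0$ on $\ulam=\uemp$, is the signed quantity
\[
\sharp\{(h,h'):\alpha_{[h,h']}=\alpha,\ \ulam(h)=-,\ \ulam(h')=+\}-\sharp\{(h,h'):\alpha_{[h,h']}=\alpha,\ \ulam(h)=+,\ \ulam(h')=-\}.
\]
Indeed the same computation shows a single swap changes both unsigned counts by the identical amount $[\mu'(b+1+d)=+]-[\mu'(b+d)=+]$, so their difference is preserved; and this signed exponent is exactly what the commutator relation of Lemma~\ref{lem_commutator} produces when one reorders the operator product, since the power $-\iota_1\varepsilon_1\varepsilon_2\delta_{\iota_1+\iota_2}$ flips sign under $\iota_1\leftrightarrow\iota_2$. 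The paper itself gives no argument beyond ``we can easily verify'', so there is nothing further to compare against.
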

Let 
\[
M(1,t):=\prod_{m>0}(1-t^m)^{-m}
\]
be MacMahon function and 
\[
s_\lambda(t^{-\rho}):=s_\lambda(t^{1/2},t^{3/2},\ldots)
\]
be the specialization of Schur function.
The next equation follows from the hook length formula:
\begin{lem}
\[
\prod
(1-t^{m'-m})=
M(1,t)\cdot s_{\lambda^{(j)}}(t^{-\rho})
\]
where the product in the left hand side is taken over the set
\[
\{
(m,m')\mid m<m',\,\ulam(mL+j)=-,\,\ulam(m'L+j)=+
\}
\]
\end{lem}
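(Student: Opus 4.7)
The plan is to reduce the product to a standard combinatorial identity about the Maya diagram of $\lambda^{(j)}$ and then invoke the hook length formula, as the paper itself indicates.

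First, by the identification $\ulam(h)=\lambda^{(\pi(h))}\bigl((h-\pi(h))/L+1/2\bigr)$ from the Notations section, the substitution $h_1=m+1/2$, $h_2=m'+1/2$ converts the indexing set
\[
\{(m,m')\mid m<m',\ \ulam(mL+j)=-,\ \ulam(m'L+j)=+\}
\]
bijectively onto the set of pairs $(h_1,h_2)\in(\Zh)^2$ with $h_1<h_2$, $\lambda^{(j)}(h_1)=-$ and $\lambda^{(j)}(h_2)=+$, each factor becoming $1-t^{h_2-h_1}$. These are the classical \emph{inversion pairs} of the Maya diagram of $\lambda^{(j)}$.

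Second, I compare this product with the one for $\lambda^{(j)}=\emptyset$, for which $\lambda(h)=-$ when $h<0$ and $\lambda(h)=+$ when $h>0$. A direct count shows that the pairs with difference $k$ number exactly $k$, contributing the factor $\prod_{k\geq 1}(1-t^k)^{k}$, which up to inversion is the MacMahon factor $M(1,t)$. For general $\lambda^{(j)}$ I proceed by induction on $|\lambda^{(j)}|$: adding a removable box $c$ corresponds to swapping two adjacent entries of the Maya diagram, and a local calculation shows that the net change in the infinite product is exactly a single factor $1-t^{h(c)}$, where $h(c)$ is the hook length of the newly added box. Iterating over all boxes yields
\[
\prod_{(h_1,h_2)}(1-t^{h_2-h_1})\;=\;M(1,t)^{\pm 1}\cdot\prod_{c\in\lambda^{(j)}}(1-t^{h(c)}),
\]
up to a monomial in $t$ coming from the shifts.

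Finally, the classical principal-specialization identity
\[
s_{\lambda^{(j)}}(t^{-\rho})\cdot\prod_{c\in\lambda^{(j)}}(1-t^{h(c)})\;=\;(\text{monomial in }t)
\]
converts the hook product into $s_{\lambda^{(j)}}(t^{-\rho})$, producing the claimed identity; the monomial discrepancies are absorbed into the overall normalization of the generating function, together with the convention on whether $M(1,t)$ appears or its inverse (determined by the sign conventions of the vertex operator commutators in Lemma \ref{lem_commutator} from which this product arose).

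The main obstacle, which is essentially the entire content of the lemma, is the inductive step: verifying that a single Maya-diagram swap changes the infinite product $\prod(1-t^{h_2-h_1})$ by exactly one factor $1-t^{h(c)}$. This is a delicate but standard piece of bookkeeping on Maya diagrams; once established, the hook length formula immediately yields the remaining identity.
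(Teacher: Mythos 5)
Your overall strategy --- translate the index set into Maya-diagram data for $\lambda^{(j)}$, compare with the empty diagram, and reduce to the hook length formula --- is exactly what the paper intends (its entire ``proof'' is the sentence preceding the lemma), so the plan is the right one. But the step you yourself single out as ``essentially the entire content of the lemma'' is false as stated: swapping a particle at $h$ with a hole at $h+1$ (i.e.\ adding one box) does \emph{not} change the product by a single factor. Write $N_k(\lambda)$ for the number of pairs in the index set with $m'-m=k$. One computes $(N_1,N_2,N_3)=(2,3,3)$ for $\lambda=(2)$ and $(3,2,4)$ for $\lambda=(2,1)$, so adding the box $(2,1)$ multiplies the product by $(1-t)(1-t^3)(1-t^2)^{-1}$: three factors, one of them in the denominator, and none equal to $1-t^{h(c)}$ for the added box (whose hook length is $1$). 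The reason is that adding a box changes several hook lengths of the old boxes at once, so the hook multiset is not built up one element per box; the induction does not close, under any order of adding the boxes.

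The correct route avoids this induction. First, the telescoping identity
\[
N_k(\lambda)-\widetilde N_k(\lambda)=\sum_{h}\Bigl([\lambda(h)=-]-[\lambda(h+k)=-]\Bigr)=k,
\]
where $\widetilde N_k(\lambda)$ counts the (finitely many) pairs $h_1<h_2$ at distance $k$ with $\lambda(h_1)=+$ and $\lambda(h_2)=-$, reduces everything to the finite multiset $\{h_2-h_1\}$ of those ``inverted'' pairs. Second, the classical beta-set description of hook lengths (Macdonald, Ch.~I) says precisely that this finite multiset \emph{is} the hook-length multiset of $\lambda^{(j)}$, so $\widetilde N_k=m_k:=\#\{c:h(c)=k\}$ and $\prod(1-t^{m'-m})=\prod_k(1-t^k)^{k+m_k}$. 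Finally, note that your own base case already shows that for $\lambda^{(j)}=\emptyset$ the left-hand side is $\prod_k(1-t^k)^k=M(1,t)^{-1}$, not $M(1,t)$; combined with $s_{\lambda}(t^{-\rho})=t^{n(\lambda)+|\lambda|/2}\prod_c(1-t^{h(c)})^{-1}$, what is actually true is $\prod(1-t^{m'-m})^{-1}=t^{-n(\lambda^{(j)})-|\lambda^{(j)}|/2}\,M(1,t)\,s_{\lambda^{(j)}}(t^{-\rho})$. Waving at ``the overall normalization'' is not an argument: you must either track the exponent sign and the monomial through the commutator factors of Lemma~\ref{lem_commutator} or state the identity in the corrected form; as written, your conclusion reproduces the lemma's equation without ever establishing which side should be inverted.
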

\begin{NB}
For $\theta\in\Theta$, we put 
\[
q^\theta_i:=
\begin{cases}
q_{\theta(n-1/2)+1/2}\cdot q_{\theta(n-1/2)+3/2}\cdot\cdots\cdot q_{\theta(n+1/2)-1/2} &  (\theta(n-1/2)<\theta(n+1/2)),\\
q_{\theta(n-1/2)-1/2}^{-1}\cdot q_{\theta(n-1/2)-3/2}^{-1}\cdot\cdots\cdot q_{\theta(n+1/2)+1/2}^{-1} & (\theta(n-1/2)>\theta(n+1/2)).
\end{cases}
\]
For $\sigma$ and $\ulam$ and a symmetric function $f$, let $f(\mathbf{q}^{w(\sigma,\ulam\sss;\sss\varepsilon_\sigma,\varepsilon_{\ulam})})$ denote the power series obtained by substituting $\mathbf{q}^{w(h)}$ for $h\in\Zh$ such that $\sigma(h)=\varepsilon_\sigma$ and $\ulam(h)=\varepsilon_{\ulam}$.
For a symmetric function $f=f(p_1,p_2,\ldots)$, let $f^*$ be the symmetric function given by $f^*(p_1,p_2,\ldots)=f(p_1^{-1},p_2^{-1},\ldots)$. 
For a subset $S\subset \Zh$, let $f(S\sss;\sss\mathbf{q})$ denote the symmetric function given by substituting $\{q_h\mid h\in S\}$ for $f$. 
We set 
\[
S_{\sigma,\ulam}^{\varepsilon,\iota}:=\{h\in\Zh\mid\sigma(h)=\varepsilon,\, \ulam(h)=\iota\}.
\]
\end{NB}%
\begin{defn}
\begin{align*}
&\mathcal{Z}^{\zeta>0}_{\sigma,\unu,\ulam}(q_0,\ldots,q_{L-1})
:=
\mathbf{q}(\type)\cdot \\
&\Biggl(\ \sum_{{\nu_-\supseteq \nu_1\supseteq \nu_2\subseteq\nu_3\subseteq\nu_+}}
s_{\nu_-\backslash\nu_1}\bigl(S^{+,-}_{\sigma,\ulam}\sss;\sss\mathbf{q}\sss\bigr)\cdot 
\tenchi s_{\nu_1\backslash\nu_2}^*\bigl(S^{-,-}_{\sigma,\ulam}\sss;\sss\mathbf{q}\sss\bigr)\cdot 
s_{\nu_3\backslash\nu_2}\bigl(S^{+,+}_{\sigma,\ulam}\sss;\sss\mathbf{q}\sss\bigr)\cdot 
\tenchi s_{\nu_+\backslash\nu_3}^*(S^{-,+}_{\sigma,\ulam}\sss;\sss\mathbf{q}\sss\bigr)
\Biggr).
\end{align*}
\end{defn}
Combining Corollary \ref{cor_ncdt_crystal}, Lemma \ref{lem_commutator}, Proposition \ref{prop_crystal_vo}, Lemma \ref{lem_3.16} and Lemma \ref{lem_3.17}, we get the following explicit formula:
\begin{thm}\label{thm_main}
If we put $t=q_0\cdot\cdots\cdot q_{L-1}$, then we have
\begin{NB}
\begin{align*}
&\mathcal{Z}^{\mathrm{ncDT}}_{\ztpair}(q_0,\ldots,q_{L-1})\\
&=
\mathbf{q}^{-\mathbf{v}(\Vmin)}\cdot 
M(1,t)^L\cdot
\prod_{j}s_{\lambda^{(j)}}(t^{-\rho})\cdot
\Biggl(\,
\prod_{
\alpha\in \Delta^{\mathrm{re},+},
\theta(\alpha)<0,
}
(1+\sigma(\alpha)\mathbf{q}^\alpha)^{\sigma(\alpha)\alpha_0}
\Biggr)
\\
&\quad\cdot\Biggl(\ \sum_{{\nu_-\supseteq \nu_1\supseteq \nu_2\subseteq\nu_3\subseteq\nu_+}}
s_{\nu_-\backslash\nu_1}\bigl(S^{+,-}_{\sigma,\ulam}\sss;\sss\mathbf{q}\sss\bigr)\cdot 
\tenchi s_{\nu_1\backslash\nu_2}^*\bigl(S^{-,-}_{\sigma,\ulam}\sss;\sss\mathbf{q}\sss\bigr)\cdot 
s_{\nu_3\backslash\nu_2}\bigl(S^{+,+}_{\sigma,\ulam}\sss;\sss\mathbf{q}\sss\bigr)\cdot 
\tenchi s_{\nu_+\backslash\nu_3}^*(S^{-,+}_{\sigma,\ulam}\sss;\sss\mathbf{q}\sss\bigr)
\Biggr).
\end{align*}
\end{NB}%
\begin{align*}
&\mathcal{Z}^{\mathrm{E-ncDT}}_{\ztpair}(q_0,\ldots,q_{L-1})=\\
&
M(1,t)^L\cdot
\prod_{j}s_{\lambda^{(j)}}(t^{-\rho})\cdot
\Biggl(\,
\prod_{
\alpha\in \Delta^{\mathrm{re},+},
\theta(\alpha)<0,
}
(1+\sigma(\alpha)\mathbf{q}^\alpha)^{\sigma(\alpha)\alpha_0}
\Biggr)\cdot
\mathcal{Z}^{\zeta>0}_{\sigma,\unu,\ulam}(q_0,\ldots,q_{L-1}).
\end{align*}
\end{thm}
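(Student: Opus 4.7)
The plan is to evaluate the generating function by realising it as a vacuum expectation in the Fock space $\fock$ and bringing that expectation into normal order. Combining Corollary \ref{cor_ncdt_crystal} with Proposition \ref{prop_crystal_vo} already gives
\[
\mathcal{Z}^{\mathrm{E-ncDT}}_{\ztpair}(q_0,\ldots,q_{L-1}) = \mathbf{q}(\type) \cdot \Bigl\langle \nu_- \Bigm| \prod^\theta_{h\in\Zh}\Gamma^{\sigma(h)}_{\lambda(h)}(q_h) \Bigm| \nu_+ \Bigr\rangle,
\]
so the remaining task is a pure symmetric function calculation: move every operator with lower index $+$ (coming from $h$ with $\ulam(h)=+$) to the left of every operator with lower index $-$, then identify the residual matrix element.

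The first move is therefore repeated application of Lemma \ref{lem_commutator}. Each swap of a pair of operators labelled by $(h,h')$, with $\ulam(h)=-$ and $\ulam(h')=+$, that is out of order in the $\theta$-product produces a scalar factor of the shape $(1-\varepsilon_1\varepsilon_2\, q_h q_{h'}^{-1})^{\pm\varepsilon_1\varepsilon_2}$ with $\varepsilon_i=\sigma(h_i)$. Collecting all these commutators factorises the expectation as a scalar prefactor times the normal-ordered expectation appearing on the left-hand side of Lemma \ref{lem_3.16}.

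The scalar prefactor is the combinatorial heart of the argument. I would partition the commuted pairs by the associated root $\alpha_{[h,h']}$ and split into real and imaginary contributions. For real roots, Lemma \ref{lem_3.17} asserts that each $\alpha\in\Lambda^{\mr{re},+}$ is realised by exactly $\alpha_0$ pairs with the required sign pattern, and the convention $\sigma(\alpha)=-\sigma(h)\sigma(h')$ converts the $\pm\varepsilon_1\varepsilon_2$ exponents into $\sigma(\alpha)\alpha_0$. The pairs actually needing a swap under $\theta$ are, by the characterisation of $C_\theta$ in \S\ref{subsec_14}, precisely those with $\theta(\alpha)<0$; this assembles the real-root contribution into
\[
\prod_{\alpha\in\Delta^{\mr{re},+},\,\theta(\alpha)<0}(1+\sigma(\alpha)\mathbf{q}^\alpha)^{\sigma(\alpha)\alpha_0}.
\]
For imaginary roots $m\delta$ the commutator products are indexed by pairs $(m,m')$ with $\ulam(mL+j)=-$ and $\ulam(m'L+j)=+$, so they group by residue $j\in\Ih$; the hook length identity stated immediately before the theorem collapses each residue class into $s_{\lambda^{(j)}}(t^{-\rho})$ times a universal factor, whose $L$-fold product yields $M(1,t)^L\cdot\prod_{j}s_{\lambda^{(j)}}(t^{-\rho})$.

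Finally, Lemma \ref{lem_3.16} rewrites the normal-ordered expectation as the sum over chains $\nu_-\supseteq\nu_1\supseteq\nu_2\subseteq\nu_3\subseteq\nu_+$ weighted by the four skew Schur functions appearing in the definition of $\mathcal{Z}^{\zeta>0}_{\sigma,\unu,\ulam}$; multiplied by $\mathbf{q}(\type)$ this is precisely that generating function. I expect the main obstacle to be the bookkeeping in the real/imaginary split of the scalar prefactor: one must check that the pairs $(h,h')$ that are out of order under $\theta$ correspond bijectively and with correct multiplicity to the roots $\alpha$ with $\theta(\alpha)<0$ (for real roots, via Lemma \ref{lem_3.17}), and that the imaginary-root infinite products decouple cleanly by residue $j\in\Ih$ so that the hook length formula applies term by term. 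Once these two accountings are done, the remaining identifications are direct.
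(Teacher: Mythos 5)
Your proposal is correct and follows essentially the same route as the paper, which proves the theorem precisely by combining Corollary \ref{cor_ncdt_crystal}, Proposition \ref{prop_crystal_vo}, Lemma \ref{lem_commutator} (normal ordering), Lemma \ref{lem_3.17} together with the hook-length lemma (to organise the commutator prefactor into real-root and imaginary-root contributions), and Lemma \ref{lem_3.16} (to identify the residual normal-ordered matrix element with $\mathcal{Z}^{\zeta>0}_{\sigma,\unu,\ulam}$). Your write-up in fact supplies more of the bookkeeping than the paper's one-line proof does.
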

\begin{cor}\label{cor_WC}
The normalized generating function
\[
\mathbf{q}(\type)\cdot\mathcal{Z}^{\mathrm{E-ncDT}}_{\type}(q_0,\ldots,q_{L-1})/\mathcal{Z}^{\mathrm{E-ncDT}}_{\sigma,\theta,\uemp,\uemp}(q_0,\ldots,q_{L-1})
\]
does not depend on $\theta$.
\end{cor}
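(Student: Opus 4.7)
The plan is to extract the corollary directly from the factorization supplied by Theorem \ref{thm_main}, which writes
\[
\mathcal{Z}^{\mathrm{E-ncDT}}_{\ztpair} = M(1,t)^L \cdot \prod_j s_{\lambda^{(j)}}(t^{-\rho}) \cdot W(\sigma,\theta) \cdot \mathcal{Z}^{\zeta>0}_{\sigma,\unu,\ulam},
\]
where the wall-crossing factor $W(\sigma,\theta) := \prod_{\alpha \in \Delta^{\mathrm{re},+},\,\theta(\alpha)<0}(1+\sigma(\alpha)\mathbf{q}^\alpha)^{\sigma(\alpha)\alpha_0}$ depends on $\theta$ but not on $\unu,\ulam$. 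Apart from $W(\sigma,\theta)$, the only factor potentially carrying $\theta$-dependence is $\mathcal{Z}^{\zeta>0}_{\sigma,\unu,\ulam}$. The strategy is therefore transparent: division by the denominator removes the wall-crossing factor, and the $\mathbf{q}(\type)$ normalization absorbs any residual dependence on $\theta$.

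First I evaluate the denominator by specializing $\unu = \uemp,\ \ulam = \uemp$ in the theorem. Each Schur function $s_{\lambda^{(j)}}(t^{-\rho})$ becomes $s_\emptyset(t^{-\rho}) = 1$, and the quadruple Schur sum in the definition of $\mathcal{Z}^{\zeta>0}_{\sigma,\uemp,\uemp}$ collapses to its single contribution with $\nu_1 = \nu_2 = \nu_3 = \emptyset$ (since $\nu_\pm = \emptyset$). One checks, from the description of $\Vmin$ recalled in Section \ref{sec_7}, that the empty-case ground state crystal $\Vmin^{\sigma,\theta,\uemp,\uemp}$ is the all-empty transition $\V(n) = \emptyset$, so its prefactor $\mathbf{q}(\sigma,\theta,\uemp,\uemp)$ equals $1$. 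Hence
\[
\mathcal{Z}^{\mathrm{E-ncDT}}_{\sigma,\theta,\uemp,\uemp} = M(1,t)^L \cdot W(\sigma,\theta).
\]

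Forming the ratio cancels both $M(1,t)^L$ and $W(\sigma,\theta)$, leaving
\[
\frac{\mathcal{Z}^{\mathrm{E-ncDT}}_{\type}}{\mathcal{Z}^{\mathrm{E-ncDT}}_{\sigma,\theta,\uemp,\uemp}} = \prod_j s_{\lambda^{(j)}}(t^{-\rho}) \cdot \mathcal{Z}^{\zeta>0}_{\sigma,\unu,\ulam}.
\]
By definition $\mathcal{Z}^{\zeta>0}_{\sigma,\unu,\ulam} = \mathbf{q}(\type) \cdot \Sigma(\sigma,\unu,\ulam)$, where $\Sigma$ denotes the quadruple Schur sum. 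That sum involves only the sets $S^{\varepsilon,\iota}_{\sigma,\ulam}$ and the monomials $\mathbf{q}^\alpha$, all of which are defined without reference to $\theta$; thus $\Sigma$ is manifestly $\theta$-independent. The only surviving $\theta$-dependence is the monomial prefactor $\mathbf{q}(\type)$, and the normalization by $\mathbf{q}(\type)$ in the statement (with the sign convention fixed by Proposition \ref{prop_crystal_vo}) is designed precisely to cancel this prefactor, yielding the $\theta$-independent expression $\prod_j s_{\lambda^{(j)}}(t^{-\rho}) \cdot \Sigma(\sigma,\unu,\ulam)$.

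The two checks that require attention are (i) the reduction of the empty case, namely the verification that $\mathbf{q}(\sigma,\theta,\uemp,\uemp) = 1$ from the combinatorics of $\Vmin^{\sigma,\theta,\uemp,\uemp}$, and (ii) the bookkeeping that the multiplicative factor $\mathbf{q}(\type)$ in the statement really cancels the prefactor of $\mathcal{Z}^{\zeta>0}_{\sigma,\unu,\ulam}$. Both are routine given the explicit description of $\Vmin^{\type}$ in Section \ref{sec_7}; no substantive obstacle arises beyond these bookkeeping steps.
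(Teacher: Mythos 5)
Your derivation is exactly the paper's (implicit) argument: Corollary \ref{cor_WC} is meant to be read off directly from Theorem \ref{thm_main} by specializing the denominator to $\unu=\ulam=\uemp$ (where $\prod_j s_{\lambda^{(j)}}(t^{-\rho})$, the quadruple Schur sum and $\mathbf{q}(\sigma,\theta,\uemp,\uemp)$ all reduce to $1$) and cancelling $M(1,t)^L$ together with the wall-crossing product. The only point you should not dismiss as routine is your check (ii): with the formulas as printed, the prefactor $\mathbf{q}(\type)$ in the corollary and the prefactor $\mathbf{q}(\type)$ in the definition of $\mathcal{Z}^{\zeta>0}_{\sigma,\unu,\ulam}$ \emph{multiply} to $\mathbf{q}(\type)^{2}$ rather than cancel, and $\mathbf{q}(\type)$ does genuinely depend on $\theta$, since the ground-state transition $\Vmin^{\type}$ changes from chamber to chamber. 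So the cancellation you invoke requires reading one of the two occurrences as $\mathbf{q}(\type)^{-1}$ --- clearly the intended convention, as otherwise the statement would be false --- but this is a sign discrepancy inherited from the paper's definitions, not something that follows from the displayed formulas, and it deserves to be said explicitly rather than filed under bookkeeping.
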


\section{Open Donaldson-Thomas invariants and topological vertex}\label{sec_5}
\subsection{Open Donaldson-Thomas invariants}\label{subsec_openDT}
\begin{NB}
Take $\beta\in H_2(Y_\sigma)$ and $n\in\Z$.
\begin{defn}
A $(\sss\unu,\ulam\sss)$-pair of type $(\beta,n)$ is a pair $(F,S)$ of a coherent sheaf $F\in\cohcy$ and a morphism $S\colon \I_{(\underline{\nu},\underline{\lambda})}\to F$ such that $c_2(F)=\beta$, $\chi(F)=n$ and $S$ is surjective.

A family of $(\sss\unu,\ulam\sss)$-pair of type $(\beta,n)$ over $B$ is a pair $(\F,\mcaS)$ of a coherent sheaf $\F$ on $Y_\sigma\times B$ and a morphism $\mcaS\colon \I_{(\underline{\nu},\underline{\lambda})}\times \mathcal{O}_{Y_\sigma}\to \F$ such that $\F$ is flat over $B$ and 
$\F|_b$ is a $(\sss\unu,\ulam\sss)$-pair of type $(\beta,n)$ for any closed point $b\in B$.

Two family $(\F,\mcaS)$ and $(\F',\mcaS')$ are said to be isomorphic is there exists an isomorphism between $\F$ and $\F'$ compatible with $\mcaS$ and $\mcaS'$.
\end{defn}
The moduli functor of $(\sss\unu,\ulam\sss)$-pair on $B$ of type $(\beta,n)$ is the functor given by 
\[
\begin{array}{ccc}
\mathbf{Var}^{\mathrm{op}} & \to & \mathbf{Sets}\\
B & \to & \{\textup {A family of $(\sss\unu,\ulam\sss)$-pair on $B$ of type $(\beta,n)$}\}/\sim
\end{array}
\]
\begin{thm}
The moduli functor of $(\sss\unu,\ulam\sss)$-pair on $B$ of type $(\beta,n)$ is represented by a quasi-projective variety $\mathfrak{M}^{\mathrm{DT}}(\sss\underline{\nu},\underline{\lambda}\,;\sss\beta,n)$.
\end{thm}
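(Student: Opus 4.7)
The plan is to realize $\mathfrak{M}^{\mathrm{DT}}(\sss\underline{\nu},\underline{\lambda}\,;\sss\beta,n)$ as an open subscheme of a Grothendieck Quot scheme built on a projective compactification of $Y_\sigma$. Since $Y_\sigma$ is a crepant resolution of the affine variety $X$, it is quasi-projective, so I would fix a projective compactification $\iota\colon Y_\sigma \hookrightarrow \bar{Y}_\sigma$ with boundary $\partial := \bar{Y}_\sigma \setminus Y_\sigma$. I would then extend the ideal sheaf $\I_{\unu,\ulam}$ to a coherent subsheaf $\bar{\I} \subset \mathcal{O}_{\bar{Y}_\sigma}$; a convenient choice is $\bar{\I} := \iota_*\I_{\unu,\ulam} \cap \mathcal{O}_{\bar{Y}_\sigma}$, which is coherent because intersecting a quasi-coherent pushforward with the structure sheaf of the compactification tames its behavior near $\partial$.

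Next I would apply Grothendieck's Quot scheme construction to produce the projective $\C$-scheme $\mathrm{Quot}_{\bar{Y}_\sigma/\C}(\bar{\I}, P)$ parameterizing quotients $\bar{\I} \twoheadrightarrow \bar{F}$ with Hilbert polynomial $P$ relative to a fixed ample $H$ on $\bar{Y}_\sigma$. For quotients supported in $Y_\sigma$ the polynomial is pinned down by $(\beta, n)$ as $P(m) = (\beta \cdot H) m + n$, and I would set
\[
\mathfrak{M}^{\mathrm{DT}}(\sss\underline{\nu},\underline{\lambda}\,;\sss\beta,n)
:= \bigl\{[\bar{\I} \twoheadrightarrow \bar{F}] \,\bigm|\, \mathrm{supp}(\bar{F}) \cap \partial = \emptyset\bigr\}
\subset \mathrm{Quot}_{\bar{Y}_\sigma/\C}(\bar{\I}, P),
\]
which is open because ``support disjoint from the closed subset $\partial$'' is an open condition on flat families of coherent sheaves.

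Representability would then be inherited from that of the Quot scheme: given a $B$-family $(\F, \mcaS)$ of $(\unu,\ulam)$-pairs on $Y_\sigma$, properness of $\mathrm{supp}(\F)$ over $B$ combined with its being contained in $Y_\sigma \times B$ allows me to push forward by $\iota \times \mathrm{id}_B$ to produce a flat $B$-family of quotients of $\bar{\I}$ supported away from $\partial \times B$, and this assignment is bijective to $B$-points of the open subscheme above. The main technical obstacle is verifying flatness of this extension-by-zero pushforward in families: one must show that $(\iota \times \mathrm{id}_B)_* \F$ remains $B$-flat, which follows from properness of support over $B$ together with $\iota$ being an open immersion, but requires some care with cohomology and base change. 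An alternative approach, which avoids compactification altogether, would be to use the derived equivalence $\dcohcy \simeq \dmodfa$ of Theorem \ref{thm_perv} to translate the problem into one about finite-dimensional $A_\sigma$-modules with a prescribed asymptotic map from $P^\zeta_{\uemp,\uemp}$, and then realize the moduli as a GIT quotient of a quiver representation variety, exactly paralleling the construction of $\MMM$ in Theorem \ref{thm_moduli} carried out in \S\ref{subsec_proof}.
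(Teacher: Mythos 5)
Your main construction is correct, but it is not the route the paper takes: it is precisely the ``alternative construction'' that the paper relegates to the remark following the theorem (compactify $Y_\sigma$, extend $\I_{\unu,\ulam}$ to an ideal sheaf $\overline{\I}$ on $\overline{Y}$, and realize the moduli space as the open locus of $\mathrm{Quot}(\overline{\I})$ where the quotient is supported away from the boundary). The paper's actual proof instead chooses $\zeta=\zeta^\circ_{\mathrm{cyc}}+T\eta$ with $T>0$ sufficiently small, uses Propositions \ref{prop_44} and \ref{prop_47} (which identify $\mathcal{D}^{\zeta^\circ_{\mathrm{cyc}}}_{\mathrm{fin}}[1/2,-1/2)$ with $\cohcy$ and $\mathcal{D}^{\zeta^\circ_{\mathrm{cyc}}}[1/2,-1/2)$ with $\cohy$) to show that a $(\sss\unu,\ulam\sss)$-pair is literally the same datum as a $(\ztpair)$-pair, and then quotes Theorem \ref{thm_moduli}, i.e.\ the framed-quiver GIT construction of $\MMM$. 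Your closing sentence gestures at this second route, but note that invoking the derived equivalence of Theorem \ref{thm_perv} alone is not enough: the substance is the identification of the abelian hearts at the parameter $\zeta^\circ_{\mathrm{cyc}}$ (Lemmas \ref{lem_phase}--\ref{lem_G} together with Propositions \ref{prop_44} and \ref{prop_47}), and the surjections should be taken from $\PP=\I_{\unu,\ulam}$, not from $P^\zeta_{\uemp,\uemp}$. As for what each approach buys: yours is more elementary and self-contained, and yields a fine, quasi-projective moduli space straight from Grothendieck's theorem --- the points needing care (coherence of $\overline{\I}=\iota_*\I_{\unu,\ulam}\cap\mathcal{O}_{\overline{Y}}$, openness of the condition that the support misses $\partial$, and $B$-flatness of the extension by zero of a family with proper support) are all standard and go through as you say. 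The paper's route is heavier but proves the theorem and the subsequent corollary $\mathfrak{M}^{\mathrm{DT}}(\sss\unu,\ulam\,;\sss\beta,n)\simeq\MMM$ in one stroke; with your construction that comparison would still have to be established separately, by exactly the t-structure argument you deferred.
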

\end{NB}
Take $\beta\in H_2(Y_\sigma,\Z)$ and $n\in\Z$.
Note that $H_2(Y_\sigma,\Z)$ has the natural basis $\{[C_i]\}_{i=1,\ldots,L-1}$, where $C_i\simeq \CP^1$ is an irreducible component of the fiber $f^{-1}(0)$ of the contraction $f\colon Y_\sigma\to X$. 
The derived equivalence induces the following isomorphism:
\[
\begin{array}{cccc}
\psi\ \colon & K_{\mathrm{num}}(\modfa) & \overset{\sim}{\longrightarrow} & H_2(Y_\sigma,\Z)\oplus\Z\,,\\
& {[}S_i{]} \ (i\neq 0) & \longmapsto & [C_i]\in H_2(Y_\sigma,\Z),\\
& {[}S_0{]}+ \cdots +{[}S_{L-1}{]} & \longmapsto & 1\in \Z.
\end{array}
\]
\begin{defn}
A $(\sss\unu,\ulam\sss)$-pair of type $(\beta,n)$ is a pair $(F,s)$ of a coherent sheaf $F\in\cohcy$ and a morphism $s\colon \I_{\underline{\nu},\underline{\lambda}}\to F$ such that $c_2(F)=\beta$, $\chi(F)=n$ and $s$ is surjective.

Two $(\sss\unu,\ulam\sss)$-pairs $(F,s)$ and $(F',s')$ are said to be equivalent if there exists a isomorphism between $F$ and $F'$ compatible with $s$ and $s'$.
\end{defn}
Recall that in \cite{3tcy} we construct a tilting vector bundle $\mathcal{P}:=\mathcal{O}_{Y_\sigma}\oplus \bigoplus_i L_i$ on $Y_\sigma$ following \cite{vandenbergh-3d}.
In particular, we have
\begin{equation}\label{eq_tilting_bundle}
(2-L)[\mathcal{O}_{Y_\sigma}]+\sum_i[L_i]=[\mathcal{L}]\in K_0(\cohy)
\end{equation}
where $\mathcal{L}$ is an ample line bundle on $Y_\sigma$. 

The functor $\R\Hom(\mathcal{L},-)$ gives an equivalence between $\dcohy$ (resp. $\dcohcy$) and $\dmoda$ (resp. $\dmodfa$), which restricts to an equivalence between ${}^{-1}\mathrm{Per}(Y_\sigma/X)$ and $\moda$. 
Here ${}^{-1}\mathrm{Per}(Y_\sigma/X)$ is the full subcategory of $\dcohy$ consisting of elements $E$ satisfying the following conditions:
\begin{itemize}
\item $H^i_{\cohy}(E)=0$ unless $i=0,-1$,
\item $\R^1f_*(H^0_{\cohy}(E))=0$ and $\R^0f_*(H_{\cohy}^{-1}(E))=0$,
\item $\Hom(H^{0}_{\cohy}(E),C)=0$ for any sheaf $C$ on $Y$ satisfying $\R f_*(C)=0$.
\end{itemize}
Take $\zeta_{\mathrm{cyc}}^{\circ}=(1-L,1,1,\ldots,1)$. 
Note that $(\zeta^\circ,T')$ is not on an intersection of two walls for any $T'\in\R$ (see \S \ref{subsec_15}).
\begin{lem}\label{lem_phase}
For an element $E\in \cohcy\cap \modfa$ we have $\phi_{Z_{\zeta_{\mathrm{cyc}}^{\circ}}}(E)\leq 1/2$ and for an element $E[1]\in \cohcy[1]\cap \modfa$ we have $\phi_{Z_{\zeta_{\mathrm{cyc}}^{\circ}}}(E[1])>1/2$.
\end{lem}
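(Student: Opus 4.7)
The plan is to express $\Re Z_{\zeta_{\mathrm{cyc}}^{\circ}}([E])$ as the intersection number $\mathcal{L}\cdot c_2(E)$ for every $E\in\cohcy$; effectivity of $c_2$ together with ampleness of $\mathcal{L}$, and the perverse-heart description of $\modfa$, will then settle both assertions.

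First, rewrite $\zeta_{\mathrm{cyc}}^{\circ}=-L\sss e_0+\eta$ with $e_0=(1,0,\ldots,0)$, so that $\Re Z_{\zeta_{\mathrm{cyc}}^{\circ}}([V])=L\sss v_0-\sum_i v_i$. The derived equivalence of Theorem~\ref{thm_perv} identifies the components $v_i$ of $[V]\in \Z^I$ with $\chi(L_i,E)$, under the convention $L_0:=\mathcal{O}_{Y_\sigma}$; in particular $v_0=\chi(E)$ and $\sum_iv_i=\chi(\mathcal{P},E)$. Formula \eqref{eq_tilting_bundle}, rewritten as $[\mathcal{P}]=(L-1)[\mathcal{O}_{Y_\sigma}]+[\mathcal{L}]$, then collapses the real part to $\Re Z_{\zeta_{\mathrm{cyc}}^{\circ}}([E])=\chi(E)-\chi(\mathcal{L},E)$. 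Now apply Hirzebruch–Riemann–Roch: because $E$ is supported on the $1$-dimensional exceptional fibre of $f\colon Y_\sigma\to X$, the only surviving term in $\int_{Y_\sigma}\bigl(1-\mathrm{ch}(\mathcal{L}^{-1})\bigr)\sss\mathrm{ch}(E)\sss\mathrm{td}(Y_\sigma)$ is the linear one $c_1(\mathcal{L})\cdot c_2(E)$. Since $E$ is a genuine coherent sheaf, $c_2(E)=\sum_j n_j[C_j]$ with $n_j\geq 0$, so ampleness of $\mathcal{L}$ yields $\mathcal{L}\cdot c_2(E)\geq 0$, strictly positive exactly when $E$ has a $1$-dimensional component.

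Case~(i) is now immediate: for $E\in\cohcy\cap\modfa$, the real part is $\geq 0$ while $\Im Z([E])=\eta\cdot[E]=\dim_{\C}E>0$, hence $\phi_{Z_{\zeta_{\mathrm{cyc}}^{\circ}}}(E)\leq 1/2$. For case~(ii) I would use the equivalence $\moda\simeq\perv$ of Theorem~\ref{thm_perv} to interpret ``$E[1]\in\modfa$'' perversely: since $H^0(E[1])=0$, the only non-trivial requirement among the conditions defining $\perv$ is $R^0f_*E=f_*E=0$. A non-zero $0$-dimensional subsheaf of $E$ would push forward to a non-zero skyscraper on $X$, so $f_*E=0$ forces $E$ to be purely $1$-dimensional; in particular $c_2(E)$ is a non-zero effective class, $\mathcal{L}\cdot c_2(E)>0$, and the identity $[E[1]]=-[E]$ in $K_{\mathrm{num}}(\modfa)$ then gives $\Re Z([E[1]])<0$, i.e.\ $\phi_{Z_{\zeta_{\mathrm{cyc}}^{\circ}}}(E[1])>1/2$.

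The most delicate step will be unpacking the perverse-heart definition to isolate $f_*E=0$; the Riemann–Roch reduction, although it hinges on the specific shape of $\zeta_{\mathrm{cyc}}^{\circ}$, is otherwise routine intersection-theoretic bookkeeping on this toric Calabi–Yau $3$-fold.
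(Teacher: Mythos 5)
Your proposal is correct and follows essentially the same route as the paper: both use \eqref{eq_tilting_bundle} to collapse $\Re Z_{\zeta^{\circ}_{\mathrm{cyc}}}([E])$ to the intersection number $\mathcal{L}\cdot c_2(E)\geq 0$ (the paper phrases this as $h^0(E,Y_\sigma)-h^0(E\otimes\mathcal{L},Y_\sigma)\leq 0$), and then invoke ampleness together with $\Im Z>0$ on the heart. The only divergence is how strictness is obtained for $E[1]$: the paper notes that equality would force $E$ to be $0$-dimensional, hence itself an object of $\modfa$, which is incompatible with $E[1]\in\modfa$; you instead extract $f_*E=0$ from the definition of $\perv$ to conclude that $E$ is purely $1$-dimensional. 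Both sub-arguments are valid; the paper's is slightly more economical, while yours makes the geometric content of the perverse heart explicit.
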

\begin{proof}
By \eqref{eq_tilting_bundle}, for an element $E\in\cohcy$ we have
\[
\zeta_{\mathrm{cyc}}^{\circ}\cdot \dimv E=h^0(E,Y_\sigma)-h^0(E\otimes \mathcal{L},Y_\sigma)\leq 0.
\]
and the equality holds if and only if $E$ is $0$-dimensional. 
Since any $0$-dimensional sheaf is in $\modfa$, the claim follows.
\end{proof}
\begin{lem}\label{lem_inclusion}
\[
\mathcal{D}^{\zeta^\circ_{\mathrm{cyc}}}_{\mathrm{fin}}[1/2,0)\subset\cohcy, \quad\mathcal{D}^{\zeta^\circ_{\mathrm{cyc}}}_{\mathrm{fin}}[1,1/2)\subset\cohcy[1].
\]
\end{lem}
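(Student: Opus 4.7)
The plan is to use the standard-cohomology short exact sequence in $\moda\simeq\perv$ and combine it with the torsion-pair description of $\D{1/2}{0}$ and $\D{1}{1/2}$ inside $\modfa$ (parts (4)--(5) of the standard tilting lemma), together with Lemma~\ref{lem_phase}, to force the required vanishings of standard cohomology.

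First I would check that for any $E\in\perv$ both cohomology pieces $H^{-1}_{\cohy}(E)[1]$ and $H^{0}_{\cohy}(E)$ individually lie in $\perv$; this is immediate from the three defining conditions of $\perv$ recalled just before Lemma~\ref{lem_phase} (the vanishing of $R^{0}f_*H^{-1}$ and of $R^{1}f_*H^{0}$, plus the $\Hom$-orthogonality on $H^{0}$). The standard cohomology triangle then becomes a short exact sequence
\[
0\to H^{-1}_{\cohy}(E)[1]\to E\to H^{0}_{\cohy}(E)\to 0
\]
in $\moda$. When $E\in\modfa\simeq\perv\cap\dcohcy$ both cohomology sheaves have compact support, so this sequence in fact lives in $\modfa$.

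For the first inclusion take $E\in\D{1/2}{0}\subset\modfa$. Since $\D{1/2}{0}$ is the torsion-free class of a torsion pair on $\modfa$, it is closed under subobjects, and hence $H^{-1}_{\cohy}(E)[1]\in\D{1/2}{0}$. All its Harder-Narasimhan factors have phases in $(0,\tfrac12]\pi$, so the central charge lies in the closed first quadrant and $\phi_{Z_{\zc_{\mathrm{cyc}}}}(H^{-1}_{\cohy}(E)[1])\le 1/2$. On the other hand, if $H^{-1}_{\cohy}(E)$ were non-zero, then $H^{-1}_{\cohy}(E)[1]\in\cohcy[1]\cap\modfa$, and Lemma~\ref{lem_phase} would force the strict inequality $\phi_{Z_{\zc_{\mathrm{cyc}}}}(H^{-1}_{\cohy}(E)[1])>1/2$; the contradiction yields $H^{-1}_{\cohy}(E)=0$, i.e.\ $E\in\cohcy$. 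The second inclusion is entirely dual: $\D{1}{1/2}$ is the torsion class, closed under quotients, so $H^{0}_{\cohy}(E)\in\D{1}{1/2}$ has phase $>1/2$, which contradicts Lemma~\ref{lem_phase} applied to $\cohcy\cap\modfa$ unless $H^{0}_{\cohy}(E)=0$.

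The main obstacle I anticipate is the verification that the standard cohomology pieces of an object of $\perv$ themselves lie in $\perv$; this requires an explicit unpacking of the third (orthogonality) defining condition, using that ${}^{-1}\mathrm{Per}(Y/X)$ is a tilt of $\cohy$. Once that step is in hand, the rest is the elementary observation that the argument of the central charge of an object is controlled by the phases of its Harder-Narasimhan factors.
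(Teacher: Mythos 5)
Your proof is correct and follows essentially the same route as the paper: both rest on the canonical $\cohy$-cohomology short exact sequence of an object of $\modfa\simeq\perv\cap\dcohcy$ together with Lemma \ref{lem_phase}. The only (harmless) difference is that the paper reduces to $Z_{\zeta^\circ_{\mathrm{cyc}}}$-semistable objects via their Harder-Narasimhan factors, whereas you treat an arbitrary object directly by combining the torsion-pair closure under subobjects (resp.\ quotients) with the convexity of the relevant phase sector.
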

\begin{proof}
Let $E\in\dmodfa$ be a $Z_{\zeta_{\mathrm{cyc}}^{\circ}}$-semistable object with $1/2\geq \phi_{Z_{\zeta_{\mathrm{cyc}}^{\circ}}}(E)>0$. 
By the canonical exact sequence 
\[
0\to H^{-1}_{\cohy}(E)[1]\to E \to H^0_{\cohy}(E)\to 0.
\]
Since $H^{-1}_{\cohy}(E)[1]\in \cohcy[1]\cap \modfa$, we have
\[
\phi_{Z_{\zeta_{\mathrm{cyc}}^{\circ}}}(H^{-1}_{\cohy}(E)[1])>1/2
\]
by Lemma \ref{lem_phase}. 
Then we can see $H^{-1}_{\cohy}(E)[1]=0$ and so $E\in \cohcy$.
We can show the right inclusion in the same way. 
\end{proof}
\begin{prop}\label{prop_44}
\[
\mathcal{D}^{\zeta^\circ_{\mathrm{cyc}}}_{\mathrm{fin}}[1/2,-1/2)\simeq\cohcy.
\]
\end{prop}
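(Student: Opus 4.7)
\textit{Plan.} I would prove the two inclusions $\mathcal{D}^{\zeta^\circ_{\mathrm{cyc}}}_{\mathrm{fin}}[1/2,-1/2)\subseteq\cohcy$ and $\cohcy\subseteq\mathcal{D}^{\zeta^\circ_{\mathrm{cyc}}}_{\mathrm{fin}}[1/2,-1/2)$ separately. For the first, part (4) of the basic lemma on the subcategories $\D{t_1}{t_2}$ from \S\ref{subsec_stability_and_tilting}, applied with $t=1/2$, $s=0$, produces the torsion pair $(\mathcal{D}^{\zeta^\circ_{\mathrm{cyc}}}_{\mathrm{fin}}[1/2,0),\mathcal{D}^{\zeta^\circ_{\mathrm{cyc}}}_{\mathrm{fin}}[0,-1/2))$ on the heart $\mathcal{D}^{\zeta^\circ_{\mathrm{cyc}}}_{\mathrm{fin}}[1/2,-1/2)$. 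Using the shift identity $\mathcal{D}^{\zeta^\circ_{\mathrm{cyc}}}_{\mathrm{fin}}[0,-1/2)=\mathcal{D}^{\zeta^\circ_{\mathrm{cyc}}}_{\mathrm{fin}}[1,1/2)[-1]$ (part (1) of the same lemma) and Lemma~\ref{lem_inclusion}, both torsion classes lie in $\cohcy$. Since the heart $\cohcy$ is closed under extensions in $\dcohcy$, the first inclusion follows.

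For the converse, take $F\in\cohcy$ and consider its Harder--Narasimhan filtration $0=F_0\to F_1\to\cdots\to F_n=F$ in $\dmodfa$ with respect to $Z_{\zeta^\circ_{\mathrm{cyc}}}$, having semistable factors $E_i$ of strictly decreasing phases $\phi_1>\cdots>\phi_n$. The target is $\phi_i\in(-1/2,1/2]$ for every $i$. Suppose $\phi_1>1/2$. Let $m$ be the unique integer with $\phi_1-m\in(0,1]$; then $E_1[-m]\in\modfa$, and a short case check using Lemma~\ref{lem_inclusion} places $E_1\in\cohcy[k]$ for some integer $k\geq 1$. Consequently $\Hom(E_1,F)=\mathrm{Ext}^{-k}(E_1[-k],F)=0$ because $\cohcy$ is a heart. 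On the other hand, the composite HN map $F_1=E_1\to F$ must be non-zero: if it vanished, then since $F_1\to F_1$ is the identity there would be a smallest $k'$ with $F_1\to F_{k'}=0$, and applying $\Hom(F_1,-)$ to the HN triangle $F_{k'-1}\to F_{k'}\to E_{k'}\to F_{k'-1}[1]$ would lift the non-zero $F_1\to F_{k'-1}$ to a non-zero morphism $F_1=E_1\to E_{k'}[-1]$, contradicting the standard Hom-vanishing between semistables (since $\phi(E_1)=\phi_1>\phi_{k'}-1=\phi(E_{k'}[-1])$). This contradiction forces $\phi_1\leq 1/2$, and a symmetric argument applied to the HN quotient $F\to E_n$ yields $\phi_n>-1/2$.

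The main technical obstacle is the non-vanishing of the composite $F_1\to F$: in the triangulated setting this morphism is not automatically a monomorphism in any heart, and the argument above -- using the phase-ordering of HN factors to lift back through a triangle -- is precisely what forces it to be non-zero. Everything else reduces to the lemmas already established in \S\ref{subsec_stability_and_tilting}, to Lemma~\ref{lem_inclusion}, and to the formal closure of hearts under extensions together with the vanishing of their negative $\mathrm{Ext}$'s.
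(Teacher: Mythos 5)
Your proof is correct. The first inclusion $\mathcal{D}^{\zeta^\circ_{\mathrm{cyc}}}_{\mathrm{fin}}[1/2,-1/2)\subseteq\cohcy$ is argued exactly as in the paper: decompose an object via the torsion pair $\bigl(\mathcal{D}^{\zeta^\circ_{\mathrm{cyc}}}_{\mathrm{fin}}[1/2,0),\mathcal{D}^{\zeta^\circ_{\mathrm{cyc}}}_{\mathrm{fin}}[0,-1/2)\bigr)$, apply Lemma \ref{lem_inclusion} to each piece (using the shift identity for the second), and conclude by closure of the heart $\cohcy$ under extensions. Where you genuinely diverge is the reverse inclusion: the paper disposes of it in one line by the general fact that if one heart of a bounded t-structure is contained in another on the same triangulated category, the two coincide. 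You instead verify the containment directly, bounding the phases of the Harder--Narasimhan factors of $F\in\cohcy$ by playing the vanishing of negative $\mathrm{Ext}$'s between objects of the heart $\cohcy$ against the non-vanishing of the maps $E_1\to F$ and $F\to E_n$, the former established by the standard lifting argument through the HN triangles together with Hom-vanishing between semistables of decreasing phase. Your case check placing a semistable $E_1$ of phase $\phi_1>1/2$ in $\cohcy[k]$ with $k\geq 1$ via Lemma \ref{lem_inclusion} is sound. In effect you have unwound, in this particular instance, the proof of the abstract statement the paper invokes; this costs more writing but makes the mechanism explicit and keeps the argument self-contained, whereas the paper's version is shorter and leans on the boundedness of both t-structures as a black box. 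Both routes are complete.
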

\begin{proof}
Every object $F\in \mathcal{D}^{\zeta^\circ_{\mathrm{cyc}}}_{\mathrm{fin}}[1/2,-1/2)$ fits into a short exact sequence
\[
0\to E\to F\to G\to 0
\]
for some pair of objects $E\in \mathcal{D}^{\zeta^\circ_{\mathrm{cyc}}}_{\mathrm{fin}}[1/2,0)$ and $G\in \mathcal{D}^{\zeta^\circ_{\mathrm{cyc}}}_{\mathrm{fin}}[0,-1/2)$. 
By Lemma \ref{lem_inclusion}, we have $E,G\in \cohcy$ and so $F\in \cohcy$.
Since both $\mathcal{D}^{\zeta^\circ_{\mathrm{cyc}}}_{\mathrm{fin}}[1/2,-1/2)$ and $\cohcy$ are cores of t-structures, the inclusion is equivalence and the claim follows.
\end{proof}
\begin{lem}\label{lem_G}
Let $G$ be an $A_\sigma$-module.
Suppose that $\mathrm{Hom}(X,G)=0$ for any finite dimensional $A_\sigma$-module $X$. 
Then we have $G\in \mathrm{Coh}(Y_\sigma)$.
\end{lem}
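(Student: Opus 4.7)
The plan is to transport $G$ via the derived equivalence $\dcohy \simeq \dmoda$ of Theorem \ref{thm_perv} and show that the resulting object of $\perv$ is concentrated in degree $0$. Set $K := H^{-1}_{\cohy}(G)$; by the definition of $\perv$ we already know $H^i_{\cohy}(G)=0$ for $i\neq 0,-1$, so the task reduces to proving $K=0$.

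The first step is to check that $K\in\cohcy$. It is coherent as the cohomology of a bounded complex of coherent sheaves, and condition (ii) in the definition of $\perv$ gives $R^0f_*(K)=0$. Since $f$ is an isomorphism away from the exceptional locus $f^{-1}(0)$, this vanishing forces $\mathrm{supp}(K)\subset f^{-1}(0)$, which is compact.

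Next I would verify that both $K[1]$ and $H^0_{\cohy}(G)$ lie in $\perv$: condition (iii) is vacuous for $K[1]$ since $H^0_{\cohy}(K[1])=0$, condition (ii) for $K[1]$ is the vanishing $R^0f_*(K)=0$ already established, and the conditions for $H^0_{\cohy}(G)$ follow immediately from those for $G$. Consequently the canonical triangle
\[
K[1] \longrightarrow G \longrightarrow H^0_{\cohy}(G) \xrightarrow{\;+1\;}
\]
becomes a short exact sequence in $\perv$, which under the derived equivalence is a short exact sequence in $\moda$ exhibiting (the image of) $K[1]$ as a submodule of $G$.

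The crucial final step uses that $K[1]$ lies simultaneously in $\perv$ and in $\dcohcy$ (since $K\in\cohcy$). By the compatibility of the equivalence with both restrictions in Theorem \ref{thm_perv}, it corresponds to an element of $\moda\cap \dmodfa=\modfa$, i.e.\ to a finite dimensional $A_\sigma$-module $X$ which injects into $G$. The hypothesis $\Hom(X,G)=0$ then forces $X=0$, whence $K=0$ and $G\in\cohy$. The main obstacle is the t-structure bookkeeping in the middle step, namely confirming that $K[1]$ and $H^0_{\cohy}(G)$ are really in $\perv$ so that the cohomology triangle genuinely yields a subobject relation inside $\moda$; once this is in place, the rest of the argument is formal.
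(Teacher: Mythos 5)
Your argument is correct and follows essentially the same route as the paper's proof: the canonical truncation triangle becomes a short exact sequence in $\perv\simeq\moda$, the subobject $H^{-1}_{\cohy}(G)[1]$ is a finite dimensional module because its support is compact, and the hypothesis $\Hom(X,G)=0$ kills it. The only cosmetic point is that the support is contained in $f^{-1}(X^{\mathrm{sing}})$ rather than $f^{-1}(0)$; the compactness of that locus is exactly what the paper invokes as well.
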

\begin{proof}
Recall that $\mathrm{mod}(A_\sigma)$ coincides with $\perv$ (Theorem \ref{thm_perv}). 
Thus we have the following exact sequence in $\mathrm{mod}(A_\sigma)$: 
\[
0\to H^{-1}_{\mathrm{coh}(Y_\sigma)}(G)\to G\to H^{0}_{\mathrm{coh}(Y_\sigma)}(G)\to 0.
\]
Since the restriction of an element in $\perv$ to the smooth locus of $X$ is a sheaf, the support of $H^{-1}_{\mathrm{coh}(Y_\sigma)}(G)$ is compact. 
Thus, as an $A_\sigma$-module, $H^{-1}_{\mathrm{coh}(Y_\sigma)}(G)$ is finite dimensional.

By the assumption we have $H^{-1}_{\mathrm{coh}(Y_\sigma)}(G)=0$. Hence the claim follows.
\end{proof}
\begin{lem}
\[
\mathcal{D}^{\zeta^\circ_{\mathrm{cyc}}}[1/2,0)^\bot
\subset\cohy.
\]
\end{lem}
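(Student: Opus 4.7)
The plan is to reduce to Lemma \ref{lem_G} and use the derived equivalence to analyze what happens on the part of a finite-dimensional module not covered by the orthogonality hypothesis.

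Let $G \in \Dinf{1/2}{0}^\bot$. By Lemma \ref{lem_G} it suffices to show $\mathrm{Hom}_{A_\sigma}(X,G) = 0$ for every $X \in \modfa$. Given such an $X$, the tilting-theoretic torsion pair $(\D{1}{1/2}, \D{1/2}{0})$ on $\modfa = \D{1}{0}$ (part (4) of the lemma in \S\ref{subsec_stability_and_tilting}) yields a short exact sequence
\[
0 \to X_+ \to X \to X_- \to 0, \quad X_+ \in \D{1}{1/2},\ X_- \in \D{1/2}{0} \subset \Dinf{1/2}{0}.
\]
The hypothesis gives $\mathrm{Hom}_{A_\sigma}(X_-, G) = 0$, and the associated long exact sequence reduces the problem to showing $\mathrm{Hom}_{A_\sigma}(X_+, G) = 0$ for every $X_+ \in \D{1}{1/2}$.

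For the second vanishing I would pass through the derived equivalence $\dmoda \simeq \dcohy$. By Lemma \ref{lem_inclusion}, $X_+ \simeq H[1]$ for some $H \in \cohcy$; writing $G \simeq G' \in \perv$ and using the truncation triangle for $G'$ in $\dcohy$, one computes
\[
\mathrm{Hom}_{A_\sigma}(X_+, G) = \mathrm{Hom}_{\dcohy}(H, G'[-1]) = \mathrm{Hom}_{\cohy}\bigl(H, H^{-1}_{\cohy}(G')\bigr),
\]
since the higher-cohomology contributions vanish when the source is a sheaf concentrated in degree $0$. Consequently, the vanishing of $\mathrm{Hom}_{A_\sigma}(X_+, G)$ for every $X_+ \in \D{1}{1/2}$, equivalently for every $H \in \cohcy$, is the same as $H^{-1}_{\cohy}(G') = 0$; and that is exactly the condition $G' \in \cohy$, which is the conclusion.

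The remaining task is thus to verify $H^{-1}_{\cohy}(G') = 0$ from the orthogonality hypothesis. By the defining property of $\perv$, $H^{-1}_{\cohy}(G')$ lies in $\mathcal F$, i.e.\ satisfies $\mathbf R^0 f_*(H^{-1}_{\cohy}(G')) = 0$; hence it is supported on the compact exceptional fibre of $f\colon Y_\sigma \to X$ and lies in $\cohcy$. Invoking Proposition \ref{prop_44} together with the torsion pair $(\D{1/2}{0},\D{0}{-1/2})$ inside $\D{1/2}{-1/2} \simeq \cohcy$, a nonzero $H^{-1}_{\cohy}(G')$ would give rise to a nonzero subobject of $\D{1/2}{0}$-type which, via the canonical inclusion $H^{-1}_{\cohy}(G')[1] \hookrightarrow G'$ in $\perv$ and the derived equivalence, furnishes a nonzero morphism from an element of $\Dinf{1/2}{0}$ into $G$, contradicting $G \in \Dinf{1/2}{0}^\bot$. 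The hard part will be this final identification --- carefully tracing the perverse truncation through the derived equivalence and the cascade of tilting torsion pairs so that the contradicting morphism genuinely lives in $\Dinf{1/2}{0}$ rather than merely in $\cohcy$.
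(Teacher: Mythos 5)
Your proof rests on a misreading of which orthogonality the hypothesis asserts, and this inverts the whole argument. The notation is admittedly confusing, but both the paper's proof (which splits off a possibly nonzero submodule $E\in\D{1/2}{0}$ of $F$) and the way the lemma is used in Proposition \ref{prop_47} show that $\mathcal{D}^{\zeta^\circ_{\mathrm{cyc}}}[1/2,0)^\bot$ denotes the torsion-free class of the torsion pair of Proposition \ref{prop_torsion_pair} at $t=1/2$, i.e.\ the class of $F\in\moda$ with $\mathrm{Hom}_{A_\sigma}(Y,F)=0$ for all $Y\in\D{1}{1/2}$ (Harder--Narasimhan phases in $(\pi/2,\pi]$). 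You instead assume vanishing of $\mathrm{Hom}(Y,G)$ for all $Y$ in a category containing $\D{1/2}{0}$ --- the complementary range of phases. Two things then go wrong. First, the reduction ``it suffices to show $\mathrm{Hom}_{A_\sigma}(X,G)=0$ for every $X\in\modfa$'' cannot work for the intended statement: any nonzero object of $\D{1/2}{0}$ already lies in the torsion-free class and receives the identity map from a finite-dimensional module, so Lemma \ref{lem_G} can only be applied after splitting off the maximal finite-dimensional submodule, which is exactly what the paper does. Second, the step you yourself flag as ``the hard part'' is not a technicality but a genuine obstruction: the canonical monomorphism $H^{-1}_{\cohy}(G')[1]\hookrightarrow G'$ has source in $\cohcy[1]\cap\modfa$, which by Lemma \ref{lem_phase} has phase $>1/2$ and hence lies in $\D{1}{1/2}$, \emph{not} in $\Dinf{1/2}{0}$; so it produces no contradiction with your hypothesis, and the argument does not close.

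Under the correct reading the proof is short and is the paper's: by Lemma \ref{lem_noether} the maximal finite-dimensional submodule $E\subset F$ exists; its $\D{1}{1/2}$-torsion part is a finite-dimensional submodule of $F$ lying in $\D{1}{1/2}$, hence zero, so $E\in\D{1/2}{0}\subset\cohcy$ by Lemma \ref{lem_inclusion}; the quotient $G=F/E$ receives no nonzero map from any finite-dimensional module, so $G\in\cohy$ by Lemma \ref{lem_G}; and the extension of $G$ by $E$ inside $\perv$ has vanishing $H^{-1}_{\cohy}$, hence is a sheaf. The salvageable kernel of your last paragraph is the observation that $H^{-1}_{\cohy}(F)$ is compactly supported, so that $H^{-1}_{\cohy}(F)[1]$ lies in $\cohcy[1]\cap\modfa\subset\D{1}{1/2}$ and injects into $F$ --- but the vanishing of that map is supplied precisely by perpendicularity to $\D{1}{1/2}$, which is the hypothesis your reading discards.
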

\begin{proof}
In the proof of Proposition \ref{prop_torsion_pair}, we show that any element $F\in \mathcal{D}^{\zeta^\circ_{\mathrm{cyc}}}[1/2,0)^\bot$ fits into an exact sequence
\[
0\to E\to F\to G\to 0
\]
where $E\in \mathcal{D}^{\zeta^\circ_{\mathrm{cyc}}}_{\mathrm{fin}}[1/2,0)$ and $\mathrm{Hom}(X,G)=0$ for any finite dimensional $A_\sigma$-module $X$.
By Lemma \ref{lem_inclusion} and Lemma \ref{lem_G}, we have $F\in \mathrm{Coh}(Y_\sigma)$.
\end{proof}
\begin{NB}
Since we have
\[
\mathcal{D}^{\zeta^\circ_{\mathrm{cyc}}}[1/2,-1/2)\simeq\cohy
\]
we can show the following in the same way:
\begin{lem}
\[
\mathcal{D}^{\zeta^\circ_{\mathrm{cyc}}}_{\mathrm{fin}}[1,1/2)^{\bot}\subset\cohcy.
\]
\end{lem}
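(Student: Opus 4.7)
The plan is to mirror the proof of the preceding lemma almost verbatim, now letting the equivalence $\mathcal{D}^{\zeta^\circ_{\mathrm{cyc}}}_{\mathrm{fin}}[1/2,-1/2)\simeq\cohcy$ of Proposition \ref{prop_44} play the role there assigned to the equivalence $\moda\simeq\perv$ of Theorem \ref{thm_perv}, and with the strip $\mathcal{D}^{\zeta^\circ_{\mathrm{cyc}}}_{\mathrm{fin}}[1,1/2)$ shifted up by one tilting step from $\mathcal{D}^{\zeta^\circ_{\mathrm{cyc}}}_{\mathrm{fin}}[1/2,0)$.

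First, given $F\in\mathcal{D}^{\zeta^\circ_{\mathrm{cyc}}}_{\mathrm{fin}}[1,1/2)^{\bot}$, I would reuse the decomposition method from the proof of Proposition \ref{prop_torsion_pair}, applied with parameter $t=1/2$, to exhibit a short exact sequence
\[
0\to E\to F\to G\to 0
\]
in the ambient category (either $\moda$ or its tilt $\catAinfzeta$ for the appropriate stability parameter) in which $E\in\mathcal{D}^{\zeta^\circ_{\mathrm{cyc}}}_{\mathrm{fin}}[1,1/2)$ and $\mathrm{Hom}(X,G)=0$ for every finite-dimensional $A_\sigma$-module $X$. By the orthogonality hypothesis on $F$, the inclusion $E\hookrightarrow F$ is zero, so $E=0$ and $F=G$.

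Next, I would verify that such a $G$ lies in $\cohcy$ by combining Lemma \ref{lem_inclusion} (which already places $\mathcal{D}^{\zeta^\circ_{\mathrm{cyc}}}_{\mathrm{fin}}[1,1/2)\subset\cohcy[1]$) with the dual of Lemma \ref{lem_G}, transferred across Proposition \ref{prop_44}. Concretely: via the derived equivalence $\moda\simeq\perv$ the module $G$ becomes a perverse sheaf; the complement of its compact-support part, together with any nonzero $H^{-1}_{\cohy}(G)$, would each produce nonzero homomorphisms from finite-dimensional modules whose classes satisfy $\zeta^\circ_{\mathrm{cyc}}\cdot[V]>0$, i.e. from objects in $\mathcal{D}^{\zeta^\circ_{\mathrm{cyc}}}_{\mathrm{fin}}[1,1/2)$, contradicting the orthogonality. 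Hence $G\in\cohcy$.

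The main obstacle is exactly this last step: one must produce, from any non-compactly-supported piece or nontrivial $H^{-1}_{\cohy}$ component of $G$, an actual finite-dimensional test module $X\in\mathcal{D}^{\zeta^\circ_{\mathrm{cyc}}}_{\mathrm{fin}}[1,1/2)$ with $\mathrm{Hom}(X,G)\neq 0$. In the preceding lemma the finiteness and sheaf properties sufficed because the target category was all of $\cohy$; here the strengthening from $\cohy$ to $\cohcy$ requires an explicit supply of such witnesses. The natural candidates are the simple $A_\sigma^{\zeta^\circ_{\mathrm{cyc}}}$-modules $S_i$ with $\zeta^\circ_{\mathrm{cyc}}\cdot[S_i]>0$, corresponding under the equivalence to the shifts $\mathcal{O}_{C_i}(-1)[1]$ of structure sheaves of the exceptional curves: a phase estimate as in Lemma \ref{lem_phase} together with the identification supplied by Proposition \ref{prop_44} ensures these exhaust the obstructions to compact support, which completes the argument.
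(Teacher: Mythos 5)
Your proposal has a genuine gap, and the difficulty you yourself flag at the end is not a technical hurdle but the symptom of a real problem. First, the decomposition step is misapplied. For $F\in\D{1}{1/2}^{\bot}$ the construction in the proof of Proposition \ref{prop_torsion_pair} (with $t=1/2$) does not produce a subobject $E\in\D{1}{1/2}$ that orthogonality then kills: orthogonality only forces the torsion part $F^{(3)}\in\D{1}{1/2}$ to vanish, and what survives is an extension $0\to E\to F\to G\to 0$ in which $E=F^{(1)}$ is the maximal finite-dimensional submodule, now lying in $\D{1/2}{0}$, and $\Hom(X,G)=0$ for every finite-dimensional $X$. That piece $E$ is handled by Lemma \ref{lem_inclusion}, not by orthogonality, and your conclusion that $F=G$ has no finite-dimensional submodules at all is not what the decomposition yields.

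The fatal step is the last one. If the orthogonal is taken in all of $\moda$ --- which is what your appeal to Proposition \ref{prop_torsion_pair} and your discussion of the ``non-compactly-supported piece of $G$'' presuppose --- then no supply of witnesses can exist: a nonzero map from a finite-dimensional (hence compactly supported) module into $G$ has compactly supported image, and a torsion-free object such as $\mathcal{O}_{Y_\sigma}$ (a direct summand of the tilting bundle, hence a projective $A_\sigma$-module with no finite-dimensional submodules), or the structure sheaf of a non-compact torus-invariant line, admits no such subobject. Thus $\mathcal{O}_{Y_\sigma}$ lies in $\D{1}{1/2}^{\bot}$ but not in $\cohcy$, so the inclusion fails under this reading. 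The simples corresponding to $\mathcal{O}_{C_i}(-1)[1]$ are supported on the exceptional curves and detect only a nonzero $H^{-1}_{\cohy}(G)$ --- that is precisely the combination of Lemmas \ref{lem_phase} and \ref{lem_G} --- and say nothing about compactness of the support of $H^{0}_{\cohy}(G)$. The only reading under which the statement is true takes the orthogonal inside $\modfa$, and then it is immediate: the Harder--Narasimhan filtration places such an $F$ in $\D{1/2}{0}$, and Lemma \ref{lem_inclusion} gives $F\in\cohcy$, with none of the machinery you set up. For comparison, the paper itself leaves this lemma in a suppressed draft block with an unfinished one-line proof and routes Proposition \ref{prop_47} through the preceding lemma $\mathcal{D}^{\zeta^\circ_{\mathrm{cyc}}}[1/2,0)^{\bot}\subset\cohy$ instead, so there is no completed argument there to match; but as written your route cannot be closed.
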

\begin{proof}
In the proof of Proposition \S \ref{prop_torsion_pair}, we show that for any $F\in \mathcal{D}^{\zeta^\circ_{\mathrm{cyc}}}_{\mathrm{fin}}[1,1/2)^{\bot}$

\end{proof}
\begin{prop}\label{prop_47}
\[
\mathcal{D}^{\zeta^\circ_{\mathrm{cyc}}}_{\mathrm{fin}}[1/2,-1/2)=\cohcy, \quad
\mathcal{D}^{\zeta^\circ_{\mathrm{cyc}}}[1/2,-1/2)=\cohy.
\]
\end{prop}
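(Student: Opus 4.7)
The plan is to prove each $\subseteq$ inclusion in Proposition \ref{prop_47} directly, and then upgrade the containments to equalities via the general principle that two hearts of bounded $t$-structures on the same triangulated category must agree as soon as one lies inside the other.

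For the non-finite inclusion, take $F \in \mathcal{D}^{\zeta^\circ_{\mathrm{cyc}}}[1/2,-1/2)$. By the HRS construction of Definition \ref{defn_catA}, the canonical truncation in the $\moda$-$t$-structure gives a short exact sequence
\[
0 \to H^0_{\moda}(F) \to F \to H^1_{\moda}(F)[-1] \to 0
\]
in $\mathcal{D}^{\zeta^\circ_{\mathrm{cyc}}}[1/2,-1/2)$, with $H^0_{\moda}(F)$ in the torsion-free part $\D{1}{1/2}^\bot$ of the torsion pair of Proposition \ref{prop_torsion_pair} at $t=1/2$, and $H^1_{\moda}(F) \in \D{1}{1/2}$. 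The lemma immediately preceding Proposition \ref{prop_47} places $H^0_{\moda}(F)$ in $\cohy$ directly, while Lemma \ref{lem_inclusion} gives $\D{1}{1/2} \subset \cohcy[1]$, so $H^1_{\moda}(F)[-1] \in \cohcy \subseteq \cohy$. Since $\cohy$ is closed under extensions in $\dcohy$, we conclude $F \in \cohy$. The finite-dimensional inclusion $\mathcal{D}^{\zeta^\circ_{\mathrm{cyc}}}_{\mathrm{fin}}[1/2,-1/2) \subseteq \cohcy$ is already the content of Proposition \ref{prop_44}.

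To deduce equality from inclusion, we invoke the following general fact: if $\mathcal{A}_1 \subseteq \mathcal{A}_2$ are both hearts of bounded $t$-structures on the same triangulated category, then $\mathcal{A}_1 = \mathcal{A}_2$. For any $X \in \mathcal{A}_2$, the cohomologies $H^i_{\mathcal{A}_1}(X)$ lie in $\mathcal{A}_1 \subseteq \mathcal{A}_2$; for $i \neq 0$ the canonical truncation triangle yields a non-zero morphism between $X \in \mathcal{A}_2$ and $H^i_{\mathcal{A}_1}(X)[-i] \in \mathcal{A}_2[-i]$ in one direction, but every such Hom-group vanishes by the $t$-structure axioms. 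Hence $H^i_{\mathcal{A}_1}(X) = 0$ for $i \neq 0$ and $X \in \mathcal{A}_1$. Applying this to our two containments (in $\dcohcy$ and $\dcohy$ respectively) finishes both equalities.

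The main technical point is checking that the HRS short exact sequence produces exactly the pieces to which the preceding lemma and Lemma \ref{lem_inclusion} apply, with the right shifts; once this bookkeeping is settled, extension-closure of $\cohy$ and $\cohcy$ completes the argument.
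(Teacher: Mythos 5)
Your argument is correct and is essentially the paper's own proof spelled out: the paper disposes of Proposition \ref{prop_47} by saying ``using the previous lemma, argue as in Proposition \ref{prop_44}'', and your HRS decomposition $0\to H^0_{\moda}(F)\to F\to H^1_{\moda}(F)[-1]\to 0$ with the torsion-free piece handled by the preceding lemma and the shifted torsion piece by Lemma \ref{lem_inclusion} is exactly that argument. Your explicit verification of the nested-hearts principle is a detail the paper takes for granted, but it matches the paper's closing step ``since both are cores of t-structures, the inclusion is an equivalence.''
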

\end{NB}
\begin{prop}\label{prop_47}
\[
\mathcal{D}^{\zeta^\circ_{\mathrm{cyc}}}[1/2,-1/2)\simeq\cohy.
\]
\end{prop}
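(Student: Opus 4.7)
The plan is to parallel the proof of Proposition \ref{prop_44}, upgrading the finite-dimensional tilt to its infinite analogue. First I would establish the inclusion $\mathcal{D}^{\zeta^\circ_{\mathrm{cyc}}}[1/2,-1/2)\subset\cohy$ inside $\dmoda\simeq\dcohy$, and then deduce the equivalence from the fact that one heart of a bounded t-structure sitting inside another forces the two to coincide.

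For the inclusion, fix $F\in\mathcal{D}^{\zeta^\circ_{\mathrm{cyc}}}[1/2,-1/2)$. By Definition \ref{defn_catA}, this heart is obtained from $\moda$ by tilting along the torsion pair of Proposition \ref{prop_torsion_pair}. General HRS tilting endows $\mathcal{D}^{\zeta^\circ_{\mathrm{cyc}}}[1/2,-1/2)$ with its own torsion pair, and so $F$ fits into a short exact sequence
\[
0\to E\to F\to G\to 0
\]
with $E$ lying in the $\moda$-torsion-free part (hence $E\in\cohy$ by the preceding lemma) and $G\in\mathcal{D}^{\zeta^\circ_{\mathrm{cyc}}}_{\mathrm{fin}}[1,1/2)[-1]$. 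The right inclusion of Lemma \ref{lem_inclusion} gives $\mathcal{D}^{\zeta^\circ_{\mathrm{cyc}}}_{\mathrm{fin}}[1,1/2)\subset\cohcy[1]$, so $G\in\cohcy\subset\cohy$. Since $\cohy$ is the heart of a t-structure on $\dcohy$ it is extension-closed, and we conclude $F\in\cohy$.

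For the equivalence, both $\mathcal{D}^{\zeta^\circ_{\mathrm{cyc}}}[1/2,-1/2)$ and $\cohy$ are hearts of bounded t-structures on $\dmoda\simeq\dcohy$. Any inclusion of one such heart into another is forced to be an equality: for $X\in\cohy$, the groups $H^i(X)$ computed in the $\mathcal{D}^{\zeta^\circ_{\mathrm{cyc}}}[1/2,-1/2)$-t-structure themselves lie in $\cohy$ by the containment just shown, and the standard spectral sequence relating the two t-structures then degenerates and forces $H^i(X)=0$ for $i\neq 0$, placing $X$ in $\mathcal{D}^{\zeta^\circ_{\mathrm{cyc}}}[1/2,-1/2)$.

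The main technical step is the tilting decomposition, that is, verifying that the infinite torsion pair of Proposition \ref{prop_torsion_pair} really does yield a short exact sequence whose torsion-free piece falls under the scope of the preceding lemma. Once this decomposition is granted, the remaining assembly is essentially formal---extension-closedness of $\cohy$ combined with Lemma \ref{lem_inclusion} and the preceding lemma.
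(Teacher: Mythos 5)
Your proof is correct and follows the same route as the paper: the paper's own proof is the one-line remark that the claim follows ``in the same way as Proposition \ref{prop_44}'' using the preceding lemma, i.e.\ precisely the decomposition $0\to E\to F\to G\to 0$ with $E$ in the torsion-free part of the torsion pair of Proposition \ref{prop_torsion_pair} (handled by the preceding lemma) and $G$ in the shifted torsion part (handled by Lemma \ref{lem_inclusion}), followed by the observation that an inclusion of hearts of bounded t-structures is an equality. You have merely written out explicitly the steps the paper leaves implicit.
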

\begin{proof}
Using the previous lemma, we can prove the claim in the same way as Proposition \ref{prop_44}.
\end{proof}
\begin{thm}
There is a coarse moduli scheme $\mathfrak{M}^{\mathrm{DT}}(\sss\underline{\nu},\underline{\lambda}\,;\sss\beta,n)$ parameterizing equivalence classes of $(\sss\unu,\ulam\sss)$-pairs $(F,s)$ of type $(\beta,n)$.
\end{thm}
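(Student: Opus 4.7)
The plan is to reduce this construction to the noncommutative moduli problem already handled by Theorem \ref{thm_moduli}. By Proposition \ref{prop_47}, the derived equivalence of Theorem \ref{thm_perv} identifies $\cohy$ with the heart $\Dinf{1/2}{-1/2}$, and by Proposition \ref{prop_44} this restricts to an identification $\cohcy\simeq \D{1/2}{-1/2}$. Under the formalism of \S\ref{subsec_stability_and_tilting}, the pair of hearts $(\Dinf{1/2}{-1/2},\D{1/2}{-1/2})$ is of the form $(\catAinfzeta,\catAzeta)$ for $\zeta=\zeta^\circ_{\mathrm{cyc}}$ (the parameter corresponding to $T=0$, so $t=1/2$).

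First I would verify that under this equivalence, the ideal sheaf $\I_{\unu,\ulam}\in\cohy$ corresponds to an object of $\catAinfzeta$ which coincides with $P^{\zeta^\circ_{\mathrm{cyc}}}_{\unu,\ulam}$; this is automatic since $\I_{\unu,\ulam}$ is already a sheaf, so applying $H^0_{\catAinfzeta}$ to it returns itself. Next, I would translate the moduli problem: a $(\sss\unu,\ulam\sss)$-pair of type $(\beta,n)$ is a surjection $\I_{\unu,\ulam}\twoheadrightarrow F$ in $\cohy$ with $F\in\cohcy$ and $(c_2(F),\chi(F))=(\beta,n)$. Under the equivalence this becomes a surjection $P^{\zeta^\circ_{\mathrm{cyc}}}_{\unu,\ulam}\twoheadrightarrow V$ in $\catAinfzeta$ with $V\in\catAzeta$, and via the isomorphism $\psi\colon K_{\mathrm{num}}(\modfa)\overset{\sim}{\longrightarrow}H_2(Y_\sigma,\Z)\oplus\Z$ the invariant $(\beta,n)$ lifts uniquely to a class $\mb{v}=\psi^{-1}(\beta,n)\in\Z^I$. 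Hence the two moduli functors are identified, as are their equivalence relations.

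Having made this identification, I would invoke Theorem \ref{thm_moduli} and the GIT construction in \S\ref{subsec_proof} to produce the coarse moduli scheme $\MMM$ for $\zeta=\zeta^\circ_{\mathrm{cyc}}$, and define
\[
\mathfrak{M}^{\mathrm{DT}}(\sss\underline{\nu},\underline{\lambda}\,;\sss\beta,n):=\mathfrak{M}^{\mathrm{ncDT}}(\zeta^\circ_{\mathrm{cyc}},\underline{\nu},\underline{\lambda}\,;\mb{v}).
\]
The universal property transports along the equivalence of abelian categories because the equivalence extends to families: $\R\Hom(\mathcal{L},-)$ pulled back along any base $B$ sends families of compactly supported sheaves flat over $B$ to families of $A_\sigma$-modules flat over $B$ with the correct heart-membership property, and conversely.

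The main obstacle is that $\zeta^\circ_{\mathrm{cyc}}$ lies on the wall $W_\delta$, so it is not in the strict interior of any chamber in the area $\{\zeta\cdot\delta<0\}$ considered in \S\ref{subsec_13}; this is the non-genericity issue. What must be checked is that the GIT construction in \S\ref{subsec_proof} still produces a coarse (rather than fine) moduli space at this boundary parameter: the stability condition needed for the existence of $\MMM$ is independent of crossing $W_\delta$ (which governs only the choice of heart, and here the heart $\cohy$ already exists), so openness of stability, boundedness, and GIT quotient formation should go through verbatim. If desired, one may alternatively construct $\mathfrak{M}^{\mathrm{DT}}(\sss\unu,\ulam\sss;\sss\beta,n)$ directly as a relative Quot scheme over a compactification using an ample line bundle $\mathcal{L}$ on $Y_\sigma$ (as in \eqref{eq_tilting_bundle}), and then identify the result with $\MMM$ via the equivalence above.
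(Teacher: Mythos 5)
There is a genuine gap in your main line of argument. Your reduction correctly identifies the hearts at $T=0$ (Propositions \ref{prop_44} and \ref{prop_47} give $\D{1/2}{-1/2}\simeq\cohcy$ and $\Dinf{1/2}{-1/2}\simeq\cohy$, and $H^0$ of the sheaf $\I_{\unu,\ulam}$ is itself), but you then invoke Theorem \ref{thm_moduli} \emph{at} the parameter $\zeta^\circ_{\mathrm{cyc}}$, i.e.\ on the wall $W_\delta$. Theorem \ref{thm_moduli} and the GIT construction of \S\ref{subsec_proof} are only available for generic $\zeta$ lying in a chamber $C_\theta$: the construction starts from a presentation of $\III$ as a finitely generated module over the concrete Noetherian algebra $A_\sigma^\zeta$, and the identification $\catAinfzeta\simeq\mathrm{mod}A_\sigma^\zeta$ (Proposition \ref{prop_t_str}) is established only for $\zeta\in C_\theta$. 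At $T=0$ the parameter is not generic --- every $0$-dimensional sheaf is semistable of phase $1/2$ --- so no such algebra or framed-quiver presentation is supplied, and your assertion that ``openness of stability, boundedness, and GIT quotient formation should go through verbatim'' is exactly the point that needs proof; it does not follow from anything established in the paper.

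The paper's proof avoids this by perturbing: it takes $\zeta=\zeta^\circ_{\mathrm{cyc}}+T\eta$ for sufficiently small $T>0$, which is generic, arranges that no wall $W_\alpha$ with $\alpha\leq\psi^{-1}(\beta,n)$ is crossed between $0$ and $T$ (only finitely many walls are relevant for a fixed numerical class $(\beta,n)$), and then checks via Propositions \ref{prop_44} and \ref{prop_47} and the argument of \cite[\S 2]{nagao-nakajima} that $(\unu,\ulam)$-pairs of type $(\beta,n)$ coincide with $(\ztpair)$-pairs at the perturbed parameter, so that Theorem \ref{thm_moduli} applies there. This perturbation step --- fixing $(\beta,n)$ first and choosing $T$ depending on it --- is the missing idea in your write-up. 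Your closing alternative (a relative Quot scheme over a compactification $\overline{Y}$) is indeed viable and is what the paper records in a remark, but as you present it it is an aside, and the proposed identification of its output with $\MMM$ still presupposes the wall-parameter moduli space whose existence is at issue.
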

\begin{proof}
By the Noetherian property, we can take sufficiently small $T>0$ such that $\I_{\underline{\nu},\underline{\lambda}}\in \catAzeta$ for $\zeta=\zeta^\circ_{\mathrm{cyc}}+T\eta$. 
Moreover we can assume that for any positive root $\alpha\leq \psi^{-1}(\beta,n)$ and for any $T>T'>0$, $\zeta':=\zeta^\circ_{\mathrm{cyc}}+T\eta$ is not on the wall $W_\alpha$. 
Then, using Propositions \ref{prop_44} and \ref{prop_47} and by the same argument as in \cite[\S 2]{nagao-nakajima}, we can verify that giving a $(\sss\unu,\ulam\sss)$-pair is equivalent to giving a $(\ztpair)$-pair. Hence the claim follows from Theorem \ref{thm_moduli}.
\end{proof}
\begin{NB}
\begin{defn}
For $\beta\in H_2(Y_\sigma)$ and $n\in\Z$, we define
\[
\mathfrak{M}^{\mathrm{E-DT}}(\sss\underline{\nu},\underline{\lambda}\,;\sss\beta,n):=
\{
\I_{(\underline{\nu},\underline{\lambda})}\surj F\mid F\in \mathrm{Coh}^{\mathrm{cpt}}(Y),\ c_2(F)=\beta,\ \chi(F)=n\}.
\]
\end{defn}
\end{NB}
\begin{rem}
An alternative construction for $\mathfrak{M}^{\mathrm{DT}}(\sss\underline{\nu},\underline{\lambda}\,;\sss\beta,n)$ is the following: 
first, take a compactification $\overline{Y}$ of $Y$ and let $\overline{\mathcal{I}_{\underline{\nu},\underline{\lambda}}}$ be the ideal sheaf on $\overline{Y}$. 
Then we can get the moduli scheme as an open subscheme of the quot scheme for $\overline{\mathcal{I}_{\underline{\nu},\underline{\lambda}}}$. 
\end{rem}
\begin{cor}
Take sufficiently small $T>0$ and put $\zeta=\zeta^\circ_{\mathrm{cyc}}+T\eta$, then we have
\[
\mathfrak{M}^{\mathrm{DT}}(\sss\underline{\nu},\underline{\lambda}\,;\sss\beta,n)\simeq \MMM.
\]
\end{cor}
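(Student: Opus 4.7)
The corollary is essentially bookkeeping on top of the proof of the preceding theorem, which already contains the pointwise bijection between $(\sss\unu,\ulam\sss)$-pairs and $(\ztpair)$-pairs for exactly this $\zeta$. My task is to package that bijection as an isomorphism of coarse moduli schemes, so the proof will be very short.

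First I would fix the correct $T$. By Lemma \ref{lem_noether} together with the boundedness of dimension vectors bounded by $\psi^{-1}(\beta,n)$, only finitely many walls $W_\alpha$ meet the ray $\{\zeta^\circ_{\mathrm{cyc}}+T'\eta\mid T'>0\}$ below $\psi^{-1}(\beta,n)$, so I can choose $T>0$ small enough that the ray avoids all of them for $0<T'\leq T$, and simultaneously small enough that $\II\in\catAinfzeta$ (a small tilt from $\cohy$ preserves any fixed sheaf in it, in view of Proposition \ref{prop_47}). With this choice, $\III = H^0_{\catAinfzeta}(\II)=\II$.

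Next I would set up the pointwise correspondence. Given a $(\sss\unu,\ulam\sss)$-pair $(F,s)$ of type $(\beta,n)$, the sheaf $F$ lies in $\cohcy$; by Proposition \ref{prop_44}, applied after the small tilt $T\eta$, this category is identified with $\catAzeta$, so $F\in\catAzeta$ and the surjection $s\colon\II\twoheadrightarrow F$ in $\cohy$ is still a surjection in $\catAinfzeta$. Conversely, a $(\ztpair)$-pair $(V,s)$ with $[V]=\psi^{-1}(\beta,n)$ has $V\in\catAzeta\simeq\cohcy$ and therefore produces a quotient sheaf of $\II$ with $c_2=\beta$ and $\chi=n$ via $\psi$. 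Stability is automatic on both sides since $V$ is a quotient of the tilt of an ideal sheaf.

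Finally, I would upgrade to families and conclude by the universal property of coarse moduli. The derived equivalence is t-exact on families of bounded dimension vector (by the argument in \cite[\S 2]{nagao-nakajima} cited in the proof of the preceding theorem), and flatness over the parameter scheme is preserved because the heart of the tilted t-structure does not move on the range $0<T'\leq T$. Hence the two moduli functors agree, and by uniqueness of coarse moduli $\mathfrak{M}^{\mathrm{DT}}(\sss\unu,\ulam\sss;\sss\beta,n)\simeq \MMM$. The main, though mild, obstacle is this last family-level compatibility: one must check that a flat family of $(\sss\unu,\ulam\sss)$-pairs transforms under the tilting equivalence to a flat family of $A_\sigma$-modules lying fibrewise in $\catAzeta$, which reduces to the facts that each fibre stays inside one fixed heart and that the tilting functor commutes with base change.
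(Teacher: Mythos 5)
Your proposal is correct and follows essentially the same route as the paper: the corollary is immediate from the proof of the preceding theorem, which (for exactly this small $T$) identifies $\III$ with $\II$, uses Propositions \ref{prop_44} and \ref{prop_47} to match $\catAzeta$ with $\cohcy$ and $\catAinfzeta$ with $\cohy$, and invokes the family-level argument of \cite[\S 2]{nagao-nakajima} to identify the two moduli functors. Your additional remarks (finiteness of the relevant walls below $\psi^{-1}(\beta,n)$, compatibility of the tilting equivalence with base change) are exactly the points the paper delegates to that reference.
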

\begin{defn}
We define the generating function
\[
\mathcal{Z}^{\mathrm{E-DT}}_{\sigma,\underline{\nu},\underline{\lambda}}(q_1,\ldots,q_{L-1},t):=
\sum_{n,\beta} e\Bigl(\mathfrak{M}^{\mathrm{DT}}(\sss\underline{\nu},\underline{\lambda}\,;\sss\beta,n)\Bigr)\cdot \mathbf{q}^{\beta}t^n,
\]
where $\mathbf{q}^{\beta}:=\prod(q_i)^{\beta_i}$ for $\beta=\sum\beta_i[C_i]$.
\end{defn}

\subsection{Topological vertex via vertex operators}\label{subsec_tv_via_vo}
Let $\vec{\nu}=(\nu^{(1)},\ldots,\nu^{(L-1)})$ be an ($L-1$)-tuple of Young diagrams and 
$\vec{\Lambda}=(\Lambda^{(1/2)},\ldots,\Lambda^{(L-1/2)})$ be an $L$-tuple of $3$-dimensional Young diagrams such that $\Lambda^{(j)}$ is
\begin{itemize}
\item of type $(\lambda^{(j)},\nu^{(j+1/2)},{}^{\mathrm{t}}\nu^{(j-1/2)})$ if $\sigma(j)=+$,
\item of type $(\lambda^{(j)},{}^{\mathrm{t}}\nu^{(j-1/2)},\nu^{(j+1/2)})$ if $\sigma(j)=-$,\label{item_weight}
\end{itemize}
where we put $\nu^{(0)}:=\nu_-$ and $\nu^{(L)}:=\nu_+$.

For a ($L-1$)-tuple of Young diagrams $\vec{\nu}$, we define the weight 
\[
w(\vec{\nu}):=\prod_i\prod_{(x,y)\in \nu^{(i)}}
\begin{cases}
q_i\cdot t^{2x+1} & \sigma(i+1/2)=\sigma(i-1/2)=+,\\
q_i\cdot t^{2y+1} & \sigma(i+1/2)=\sigma(i-1/2)=-,\\
q_i\cdot t^{x+y+1} & \sigma(i+1/2)\neq \sigma(i-1/2),
\end{cases}
\]
and for a $3$-dimensional Young diagram $\Lambda$ of type $(\lambda_x,\lambda_y,\lambda_z)$ we define the weight $w(\Lambda)$ by 
\[
w(\Lambda):=t^{|\Lambda\backslash \Lambda_{\mathrm{min}}|}.
\]
For a datum $(\vec{\mu},\vec{\Lambda})$ as above, we define the weight $w(\vec{\mu},\vec{\Lambda})$ by
\[
w(\vec{\mu},\vec{\Lambda}):=w(\vec{\nu})\cdot \prod w(\Lambda^{(j)}).
\]

The $T:=(\C^*)^3$-action on $Y_\sigma$ induces a $T$-action on $\mathfrak{M}^{\mathrm{DT}}(\sss\underline{\nu},\underline{\lambda}\sss;\sss\beta,n)$.
The following claim is given in \cite{mnop}.
\begin{prop}
The set $\mathfrak{M}^{\mathrm{DT}}(\sss\underline{\nu},\underline{\lambda}\sss;\sss\beta,n)^T$ of $T$-fixed points is isolated and parametrized by the data $(\vec{\mu},\vec{\Lambda})$ as above with weight $\mathbf{q}^{\beta}\cdot t^n$.
\end{prop}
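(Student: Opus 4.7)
The plan is to reduce the classification to local torus-invariant data on the toric threefold $Y_\sigma$, following the standard $T$-fixed locus analysis of \cite{mnop}. A $T$-fixed equivalence class of $(\underline{\nu},\underline{\lambda})$-pairs $(F,s)$ is represented by a torus-equivariant surjection $s\colon \mathcal{I}_{\underline{\nu},\underline{\lambda}}\twoheadrightarrow F$; equivalently, its kernel $\mathcal{I}'\subset \mathcal{I}_{\underline{\nu},\underline{\lambda}}\subset \mathcal{O}_{Y_\sigma}$ is a $T$-invariant ideal sheaf cutting out a subscheme $Z'\supset Z$, where $Z$ is the (non-compact) subscheme defined by $\mathcal{I}_{\underline{\nu},\underline{\lambda}}$, and $Z'\setminus Z$ has compact support because $F\in\mathrm{Coh}_{\mathrm{cpt}}(Y_\sigma)$. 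Thus the first step is to identify $T$-fixed points of the moduli with the set of $T$-invariant ideal sheaves asymptotic to $\mathcal{I}_{\underline{\nu},\underline{\lambda}}$ along the open edges.

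Next I would stratify $Y_\sigma$ by its toric strata and analyze $\mathcal{I}'$ locally. Over each vertex chart $U_j\simeq\C^3$, a $T$-invariant ideal is encoded by a complement of a 3-dimensional partition $\Lambda^{(j)}\subset(\Z_{\geq 0})^3$ satisfying the staircase condition (if $(x,y,z)\notin\Lambda^{(j)}$ then so are the three successors); the three legs of $\Lambda^{(j)}$ along the coordinate axes are Young diagrams. The sign $\sigma(j)=\pm$ records which of the $y$- and $z$-coordinates of the chart $U_j$ are identified with the two neighboring closed edges, forcing the type of $\Lambda^{(j)}$ to be either $(\lambda^{(j)},\nu^{(j+1/2)},{}^{\mathrm{t}}\nu^{(j-1/2)})$ or $(\lambda^{(j)},{}^{\mathrm{t}}\nu^{(j-1/2)},\nu^{(j+1/2)})$, exactly as in the list in \S\ref{subsec_21}. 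Along each closed edge $C_i\simeq\CP^1$, the transition between the two neighboring vertex charts is given by a single $T$-invariant ideal on $C_i\times\C^2$, classified by one Young diagram $\nu^{(i)}$; gluing therefore imposes that the legs of the two adjacent $\Lambda^{(j)}$'s towards $C_i$ coincide (up to the transpose convention dictated by $\sigma$) with $\nu^{(i)}$. Along the open edges the asymptotics are held fixed by the defining data $\underline{\nu},\underline{\lambda}$ and the empty diagrams on non-existent closed edges beyond infinity. Finiteness $|\Lambda^{(j)}\setminus\Lambda^{(j)}_{\min}|<\infty$ corresponds to compactness of $Z'\setminus Z$.

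The third step is the weight computation. The class $\beta = c_2(F)=\sum_i\beta_i[C_i]\in H_2(Y_\sigma,\Z)$ reads off the excess 1-dimensional part of $Z'\setminus Z$ supported along $C_i$, which has multiplicity $|\nu^{(i)}|$; this yields the factor $\mathbf{q}^\beta=\prod q_i^{|\nu^{(i)}|}$, accounting for the $q_i$-part of $w(\vec{\nu})$. For the $t$-weight one computes $n=\chi(F)$ as a sum of renormalized local contributions: each vertex contributes $t^{|\Lambda^{(j)}\setminus\Lambda^{(j)}_{\min}|}$ (the usual $\mathrm{Hilb}(\C^3)$ box-counting), while each closed edge with partition $\nu^{(i)}$ contributes the MNOP edge term, whose exponent of $t$ at a box $(x,y)\in\nu^{(i)}$ is precisely $2x+1$, $2y+1$, or $x+y+1$ according to whether the two neighboring $\sigma$-signs agree in the first variable, second variable, or disagree. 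Multiplying these local contributions reproduces $w(\vec{\nu},\vec{\Lambda})$ and matches $\mathbf{q}^\beta t^n$.

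The main obstacle is the $t$-weight bookkeeping at closed edges: the sheaf $\mathcal{O}_{Z'}/\mathcal{O}_Z$ has an infinite $\CP^1$-cylinder along each $C_i$, whose Euler characteristic must be regularized against the asymptotic contributions of the two adjacent $\Lambda^{(j)}$'s, and the result depends delicately on which pair $(\sigma(j),\sigma(j+1))$ occurs, producing the three cases in the definition of $w(\vec{\nu})$. Once this renormalization is carried out as in \cite{mnop}, isolatedness of the fixed points follows automatically from rigidity of torus-equivariant ideal sheaves on toric Calabi-Yau threefolds, and the parametrization statement is immediate from the local-to-global description above.
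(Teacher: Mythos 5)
Your argument is correct and is exactly the standard MNOP localization analysis (identification of $T$-fixed pairs with $T$-invariant ideal sheaves asymptotic to $\I_{\unu,\ulam}$, local vertex/edge classification, and the regularized Euler-characteristic bookkeeping giving the edge weights $2x+1$, $2y+1$, $x+y+1$); the paper itself offers no proof of this proposition, simply citing \cite{mnop}, and the combinatorial parametrization you derive is the one already recorded in \S\ref{subsec_21}. So your route coincides with the one the paper relies on by reference.
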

\begin{defn}
We define the generating function
\[
\mathcal{Z}_{\unu,\ulam}^{\mathrm{TV}}(q_1,\ldots,q_{L-1},t):=\sum_{(\vec{\mu},\vec{\Lambda})} w(\vec{\mu},\vec{\Lambda}).
\]
\end{defn}
\begin{cor}
If we put $t=q_0\cdot\cdots\cdot q_{L-1}$, then we have
\[
\mathcal{Z}_{\sigma,\unu,\ulam}^{\mathrm{E-DT}}(q_0,\ldots,q_{L-1})
=\mathcal{Z}_{\sigma,\unu,\ulam}^{\mathrm{TV}}(q_1,\ldots,q_{L-1},t)
\]
\end{cor}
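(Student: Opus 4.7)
The strategy is standard torus localization plus a careful bookkeeping of weights. Since $\mathfrak{M}^{\mathrm{DT}}(\sss\underline{\nu},\underline{\lambda}\sss;\sss\beta,n)$ carries a $T$-action with isolated fixed point set (by the proposition just stated), the Euler characteristic equals the cardinality of $\mathfrak{M}^{\mathrm{DT}}(\sss\underline{\nu},\underline{\lambda}\sss;\sss\beta,n)^T$. Thus
\[
\mathcal{Z}_{\sigma,\unu,\ulam}^{\mathrm{E-DT}}(q_0,\ldots,q_{L-1})=\sum_{n,\beta}\,\#\,\mathfrak{M}^{\mathrm{DT}}(\sss\underline{\nu},\underline{\lambda}\sss;\sss\beta,n)^T\cdot\mathbf{q}^\beta\,t^n,
\]
and after the substitution $t=q_0\cdot\cdots\cdot q_{L-1}$, the sum over $(\beta,n)$ collapses into a single sum over $T$-fixed points weighted by $\mathbf{q}^\beta\cdot t^n$.

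First I would invoke the proposition: the $T$-fixed set is parameterized by data $(\vec{\nu},\vec{\Lambda})$, i.e.\ an $(L-1)$-tuple of Young diagrams on the closed edges together with an $L$-tuple of $3$-dimensional Young diagrams of the appropriate asymptotic type, satisfying the compatibility rules dictated by $\sigma$. This is precisely the index set of $\mathcal{Z}^{\mathrm{TV}}_{\unu,\ulam}$, so the matter reduces to identifying the weights term by term.

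Next I would compare weights. The class $[C_i]\in H_2(Y_\sigma,\Z)$ is paired with $q_i$ ($i=1,\ldots,L-1$), while $t$ records the holomorphic Euler characteristic $\chi$. Under the isomorphism $\psi\colon K_{\mathrm{num}}(\modfa)\xrightarrow{\sim}H_2(Y_\sigma,\Z)\oplus\Z$ the substitution $t=q_0\cdot\cdots\cdot q_{L-1}$ corresponds exactly to matching the monomial $\mathbf{q}_\theta^{\mathbf{v}}$ from the noncommutative side with $\mathbf{q}^\beta t^n$ from the DT side. For the boxes of a $3$-dimensional Young diagram sitting at a torus-invariant vertex one only gets a factor $t$ per box (since a box contributes to $\chi(\mathcal{O}_Z)$ but not to any curve class), which is the factor $t^{|\Lambda\backslash\Lambda_{\mathrm{min}}|}$ in $w(\Lambda)$. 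For boxes along a compact curve $C_i$ the contribution factors as the length of the box ($q_i$) together with an additional $t^{\mathrm{(multiplicity)}}$ recording its position along $C_i$; the three cases in the definition of $w(\vec{\nu})$ correspond exactly to the three local toric geometries distinguished by the values of $\sigma(i\pm 1/2)$ (curve of type $(-1,-1)$, $(-2,0)$ or $(0,-2)$ in the normal bundle).

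The main obstacle is the computation of $(\beta,n)$ for a given fixed point $(\vec{\nu},\vec{\Lambda})$ and the verification that $\mathbf{q}^\beta t^n=w(\vec{\nu},\vec{\Lambda})$. This is a local toric computation done patch by patch on $Y_\sigma$: on each affine chart $U_j\simeq\C^3$ one reads off the contribution of the $3$-dimensional partition $\Lambda^{(j)}$ to both $c_2$ and $\chi$ from its infinite legs and its finite bulk, then glues across the curves $C_i$ where one must be careful not to double-count the contribution coming from the Young diagrams $\nu^{(i)}$ on the closed edges. This is exactly the calculation carried out in the original MNOP paper (\cite{mnop}), so my plan would be to reduce to it rather than redo it from scratch. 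Once both sides agree termwise, summing over $(\vec{\nu},\vec{\Lambda})$ yields the claimed identity.
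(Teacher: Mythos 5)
Your proposal is correct and follows essentially the same route as the paper: the paper derives this corollary immediately from the preceding proposition (cited to \cite{mnop}), which already asserts that the isolated $T$-fixed points of $\mathfrak{M}^{\mathrm{DT}}(\sss\underline{\nu},\underline{\lambda}\sss;\sss\beta,n)$ are parametrized by the data $(\vec{\nu},\vec{\Lambda})$ with weight $\mathbf{q}^\beta t^n$, so that counting fixed points computes the Euler characteristic and the two generating functions coincide term by term. Your additional discussion of the local weight bookkeeping is a correct unpacking of what the citation to MNOP carries, not a different argument.
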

We set 
\[
H^{j}_\pm:=\{h\in\Zh\mid \pi(h)=j,\ \ulam(h)=\pm\}.
\]
The following claim directly follows from the argument in \cite{ORV}:
\begin{prop}
\begin{align*}
&\mathcal{Z}^{\mathrm{TV}}_{\type}(q_1,\ldots,q_{L-1},t)|_{t=\mathbf{q}^\delta}=\mathbf{q}(\type)\cdot \\
&\quad\Bigl\langle\, \nu_-\,\Big|\,
\prod_{h\in H^{1/2}_-}\Gamma^{\sigma(1/2)}_{-}({q}_h)\cdot
\prod_{h\in H^{1/2}_+}\Gamma^{\sigma(1/2)}_{+}({q}_h)\cdot
\prod_{h\in H^{3/2}_-}\Gamma^{\sigma(3/2)}_{-}({q}_h)\cdot\\
&\quad\cdots\cdot
\prod_{h\in H^{L-1/2}_-}\Gamma^{\sigma(L-1/2)}_{-}({q}_h)\cdot
\prod_{h\in H^{L-1/2}_-}\Gamma^{\sigma(L-1/2)}_{+}({q}_h)
\,\Big|\,\nu_+\,\Bigr\rangle.
\end{align*}
\end{prop}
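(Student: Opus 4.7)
The plan is to reduce the claim to the vertex computation of Okounkov--Reshetikhin--Vafa applied at each vertex of the toric graph, and then glue the vertices along internal edges via the bilinear form on the Fock space. The key geometric input is the slicing of a $3$-dimensional Young diagram: for a $3$-dimensional Young diagram $\Lambda^{(j)}$ at the vertex $j\in\Ih$, one slices $\Lambda^{(j)}$ by planes perpendicular to the axis that carries the external leg $\lambda^{(j)}$. This produces a doubly-infinite sequence of ordinary Young diagrams indexed by half-integers $h$ with $\pi(h)=j$, which interlace according to the rules encoded in the definition of a transition: the interlacing is $\pg$ or $\mg$ depending on $\sigma(j)$, and the two slices bracketing a step of size $\ulam(h)\neq 0$ are in fact equal (the interlacing is trivial along the external leg direction). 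This is exactly the combinatorics of the maps $\V$ in \S\ref{subsec_crystal_and_fixedpoint} applied one-vertex-at-a-time.

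Next I would match weights. After the specialisation $t=\mathbf{q}^{\delta}=q_0\cdots q_{L-1}$, the monomial attached to a box of $\Lambda^{(j)}$ becomes a product of $q_h$'s, where $h$ ranges over the set of half-integers $\pi^{-1}(j)$ traversed while moving from the slice containing the box to the reference empty slice. Summing over boxes in a single slice and then over slices turns $w(\Lambda^{(j)})$, on the nose, into $\prod_{h\in \pi^{-1}(j)} q_h^{|\pi_{h-1/2}|-|\pi_{h+1/2}|}$ multiplied by the purely asymptotic factor encoded in $\Vmin^{\type}$, i.e.\ by the corresponding factor of $\mathbf{q}(\type)$. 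Because the interlacing rule between the $(h-1/2)$-slice and the $(h+1/2)$-slice is $\overset{\sigma(j)}{\succ}$ or $\overset{\sigma(j)}{\prec}$ (depending on $\ulam(h)$), this slice-to-slice transition is precisely the definition of $\Gamma^{\sigma(j)}_{\mp}(q_h)$ acting between the corresponding basis vectors of $\fock$. The separation of the product into $H^{j}_-$ and $H^{j}_+$ factors in the statement corresponds to slices to the left vs.\ right of the external leg at vertex $j$.

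To assemble the whole picture I would glue vertices along internal edges. The shared Young diagram $\nu^{(j)}$ ($j=1,\ldots,L-1$) labelling the edge between vertices $j-1/2$ and $j+1/2$ appears as the common slice: for vertex $j-1/2$ it is the last slice on the right, and for vertex $j+1/2$ it is the first slice on the left (possibly transposed, depending on $\sigma$, which is why the transpose appears in the definition of $\Lambda^{(j)}$ of type $(\lambda^{(j)},\ldots,\tenchi\nu^{(j-1/2)})$ vs.\ $(\lambda^{(j)},\ldots,\nu^{(j+1/2)})$). The sum over $\nu^{(j)}$ becomes insertion of the completeness relation $\sum_{\nu}|\nu\rangle\langle\nu|$ between the two vertex contributions, which is exactly composition of linear operators on $\fock$. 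The external legs $\nu_\pm$ survive uncontracted and become the bra $\langle\nu_-|$ and ket $|\nu_+\rangle$. The overall prefactor $\mathbf{q}(\type)$ collects the $|\Vmin|$-type asymptotic contributions from all $L$ vertices, which is precisely the definition $\prod_h q_h^{|\Vmin(h-1/2)|-|\Vmin(h+1/2)|}$.

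The main obstacle is bookkeeping of conventions rather than any genuine computation: one must check that the transposition convention in the type of $\Lambda^{(j)}$ (which depends on $\sigma(j)$), the direction of the slicing (so that the order of factors in the matrix element reads left-to-right as in the statement), and the assignment $\ulam(h)=\pm\leftrightarrow\Gamma^{\sigma}_{\pm}$ are all mutually consistent. Once these are aligned as above, the equality reduces vertex by vertex to the ORV topological vertex formula, and the global identity then follows by multiplying all vertices together and recognising the prefactor as $\mathbf{q}(\type)$.
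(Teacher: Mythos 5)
Your overall strategy --- diagonal slicing into interlacing sequences of Young diagrams, reading each interlacing step as a matrix element of a vertex operator, gluing adjacent vertices by inserting $\sum_{\nu}|\nu\rangle\langle\nu|$ over the internal edge partitions, and absorbing the minimal-configuration weight into $\mathbf{q}(\type)$ --- is exactly the argument the paper has in mind; the paper itself offers nothing beyond the citation to \cite{ORV}. However, your first paragraph misdescribes the one mechanism that actually produces the structure of the stated formula, and in a way that contradicts your own second paragraph. The external leg $\lambda^{(j)}$ is not the direction you slice transversally, and the slices bracketing a position $h$ are never ``equal'': the slicing at the vertex $j$ is transverse to the two edges carrying $\nu^{(j\pm 1/2)}$ (these become the asymptotic slices, suitably transposed according to $\sigma$), while $\lambda^{(j)}$ is the distinguished leg whose Maya diagram $\ulam|_{\pi^{-1}(j)}$ dictates, at each $h$ with $\pi(h)=j$, whether the step from the $(h-1/2)$-slice to the $(h+1/2)$-slice ascends or descends, i.e.\ $\V(h-\ulam(h)/2)\overset{\sigma(h)}{\succ}\V(h+\ulam(h)/2)$. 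This is precisely why the operator attached to $h$ is $\Gamma^{\sigma(j)}_{-}(q_h)$ for $h\in H^{j}_{-}$ and $\Gamma^{\sigma(j)}_{+}(q_h)$ for $h\in H^{j}_{+}$; under your statement that ``the interlacing is trivial along the external leg direction'' the sets $H^{j}_{\pm}$ would play no role and the formula could not come out. Relatedly, $H^{j}_{-}$ and $H^{j}_{+}$ are not ``slices to the left vs.\ right of the external leg'': for nonempty $\lambda^{(j)}$ these two subsets of $\pi^{-1}(j)$ interleave.

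This points to the one genuinely unaddressed step: the order of the factors. Since $\Gamma_{-}$ and $\Gamma_{+}$ do not commute (Lemma \ref{lem_commutator}), placing all of $\prod_{h\in H^{j}_{-}}\Gamma^{\sigma(j)}_{-}(q_h)$ to the left of $\prod_{h\in H^{j}_{+}}\Gamma^{\sigma(j)}_{+}(q_h)$ within each vertex, rather than in the order dictated by the slicing parameter $h$, changes the matrix element by explicit scalar factors coming from the finitely many out-of-order pairs; such reorderings are exactly where the framing and wall-crossing factors appearing in Theorem \ref{thm_main} and its topological-vertex analogue live. You assert that ``the order of factors in the matrix element reads left-to-right as in the statement'' once conventions are aligned, but that is a substantive claim requiring an argument (it is the analogue of identifying which limiting chamber the DT count corresponds to), not a bookkeeping convention, and your proposal gives no reason for it.
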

We set
\[
\Delta^{\mathrm{re},+}_-:=\{\alpha_{[h,h']}\in \Delta^{\mathrm{re},+}\mid \pi(h)>\pi(h')\}.
\]
We can compute the generating function in the same way as Theorem \ref{thm_main}.
\begin{thm}
\begin{align*}
&\mathcal{Z}^{\mathrm{TV}}_{\type}(q_1,\ldots,q_{L-1},t)\\
&=
\mathbf{q}(\type)\cdot 
M(1,t)^L\cdot
\prod_{j}s_{\lambda^{(j)}}(t^{-\rho})\cdot
\Biggl(\,
\prod_{
\alpha\in \Delta^{\mathrm{re},+}_-
}
(1+\sigma(\alpha)\mathbf{q}^\alpha)^{\sigma(\alpha)\alpha_0}
\Biggr)
\\
&\quad\cdot\Biggl(\ \sum_{{\nu_-\supseteq \nu_1\supseteq \nu_2\subseteq\nu_3\subseteq\nu_+}}
s_{\nu_-\backslash\nu_1}\bigl(S^{+,-}_{\sigma,\ulam}\sss;\sss\mathbf{q}\sss\bigr)\cdot 
\tenchi s_{\nu_1\backslash\nu_2}^*\bigl(S^{-,-}_{\sigma,\ulam}\sss;\sss\mathbf{q}\sss\bigr)\cdot 
s_{\nu_3\backslash\nu_2}\bigl(S^{+,+}_{\sigma,\ulam}\sss;\sss\mathbf{q}\sss\bigr)\cdot 
\tenchi s_{\nu_+\backslash\nu_3}^*(S^{-,+}_{\sigma,\ulam}\sss;\sss\mathbf{q}\sss\bigr)
\Biggr).
\end{align*}
\end{thm}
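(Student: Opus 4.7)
The plan is to mirror the proof of Theorem \ref{thm_main}, but starting from the vertex operator expression given by the preceding proposition for $\mathcal{Z}^{\mathrm{TV}}_{\type}|_{t=\mathbf{q}^\delta}$ and then showing the identity holds as a formal power series in the $q_i$'s (so that the specialization $t = \mathbf{q}^\delta$ extends to the full identity claimed).

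First I would take the ordered product of vertex operators
\[
\prod_{h\in H^{1/2}_-}\Gamma^{\sigma(1/2)}_{-}(q_h)\cdot\prod_{h\in H^{1/2}_+}\Gamma^{\sigma(1/2)}_{+}(q_h)\cdot\prod_{h\in H^{3/2}_-}\Gamma^{\sigma(3/2)}_{-}(q_h)\cdot\cdots
\]
and repeatedly apply Lemma \ref{lem_commutator} to push all $\Gamma^{\pm}_{-}$-type operators to the left and all $\Gamma^{\pm}_{+}$-type operators to the right. Each swap of a pair $\Gamma^{\varepsilon_1}_{+}(q_h)$ and $\Gamma^{\varepsilon_2}_{-}(q_{h'})$ (with $h$ originally to the left of $h'$ in the product) produces a scalar factor
$(1 - \varepsilon_1\varepsilon_2\, q_h q_{h'}^{-1})^{-\varepsilon_1\varepsilon_2}$ or its reciprocal, depending on which direction we commute past.

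The second step is to identify the commutator factors with the product over $\alpha \in \Delta^{\mathrm{re},+}_-$. The relevant pairs $(h, h')$ with $h < h'$ are those for which $\Gamma^\cdot_\pm(q_h)$ sits to the right of $\Gamma^\cdot_\mp(q_{h'})$ in the original ordering; by inspection of the vertex-block ordering, this happens precisely when $\pi(h) > \pi(h')$, which is the defining condition of $\Delta^{\mathrm{re},+}_-$. For each such pair, $\mathbf{q}^{\alpha_{[h,h']}} = q_h^{-1} q_{h'}$ and the sign $\sigma(\alpha) = -\sigma(h)\sigma(h')$ matches the signs $-\varepsilon_1\varepsilon_2$ coming out of Lemma \ref{lem_commutator}; moreover, for fixed $\alpha \in \Delta^{\mathrm{re},+}_-$ the number of such pairs is exactly $\alpha_0$ by Lemma \ref{lem_3.17}. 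This yields the factor $\prod_{\alpha \in \Delta^{\mathrm{re},+}_-}(1+\sigma(\alpha)\mathbf{q}^\alpha)^{\sigma(\alpha)\alpha_0}$. Pairs of the same $\iota$-type (both $+$ or both $-$) commute freely by Lemma \ref{lem_commutator}, so they contribute no scalar, and the imaginary-root contributions assemble the MacMahon factor as in Theorem \ref{thm_main}.

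Third, after the reordering, the operator part is a product of $\Gamma^\pm_-$'s on the left and $\Gamma^\pm_+$'s on the right, and inserting a complete set $\sum_{\nu_1,\nu_2,\nu_3}|\nu_i\rangle\langle\nu_i|$ between the appropriate blocks together with Lemma \ref{lem_schur} converts the remaining matrix element into the quadruple sum of skew Schur functions of the argument tuples $S^{\pm,\pm}_{\sigma,\ulam}$ exactly as in the right-hand side of Theorem \ref{thm_main}. Finally, the residual product $\prod_{m<m',\,\ulam(mL+j)=-,\,\ulam(m'L+j)=+}(1-t^{m'-m})$ that appears from the commutators within a single vertex block is identified via the hook-length-formula lemma with $M(1,t)^L \cdot \prod_{j}s_{\lambda^{(j)}}(t^{-\rho})$, and the overall weight $\mathbf{q}(\type)$ is carried along from the preceding proposition.

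The main obstacle is the careful bookkeeping in the second step: one must verify that the sign conventions $\sigma(\alpha) = -\sigma(h)\sigma(h')$, the monomial $\mathbf{q}^\alpha = q_h^{-1}q_{h'}$, and the selection rule $\pi(h) > \pi(h') \iff \alpha \in \Delta^{\mathrm{re},+}_-$ all conspire to give the stated product exactly, including the exponent $\sigma(\alpha)\alpha_0$. Once this matching is made precise, the remainder of the argument is essentially a transcription of the computation in \S \ref{subsec_computation}.
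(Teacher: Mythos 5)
Your proposal is correct and follows essentially the same route as the paper, which simply states that the computation proceeds ``in the same way as Theorem \ref{thm_main}'': start from the vertex-operator expression in the preceding proposition, reorder the operators via Lemma \ref{lem_commutator}, match the resulting scalar factors to the product over $\Delta^{\mathrm{re},+}_-$ using Lemma \ref{lem_3.17} and the hook-length lemma, and evaluate the remaining matrix element with Lemma \ref{lem_schur} as in Lemma \ref{lem_3.16}. The only difference is that you spell out the bookkeeping the paper leaves implicit.
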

\begin{rem}
In fact, we do not use non-commutative Donaldson-Thomas theory in the proof of this theorem.
\end{rem}

\section{Construction of moduli spaces}\label{sec_4}
\subsection{Moduli space via a framed quiver with relations}\label{subsec_proof}
Since $A_\sigma^\zeta$ is Noetherian (see Lemma \ref{lem_noether}), we can take a presentation
\[
\oplus P_i^{\oplus b_i}\overset{J}{\to} \oplus P_i^{\oplus a_i}\to \III\to 0
\]
of the finitely generate $A_\sigma^\zeta$-module $\III$. 
Given a presentation, we define the new quiver with relation $\bar{A}^\zeta_\sigma(\unu,\ulam):=\C\bar{Q}/\mathcal{J}$ as follows: 
\begin{itemize}
\item the set of vertices of $\bar{Q}$ is given by $I\sqcup *$, 
\item the set of arrows of $\bar{Q}$ is given by adding $\iota_a$ ($1\leq a\leq a_i$) from $*$ to $i$ for each $i$ to the set of arrows of $Q$, and
\item the ideal $\mathcal{J}$ is generated by the relations of the original algebra $A_\sigma^\zeta$ and the elements of the following form:
\[
\sum \gamma_a(B)\cdot \iota_a
\]
for $B\in \oplus P_i^{\oplus b_i}$ and $J(B)=\sum \gamma_a(B)\cdot e_a$, where $e_a\in \oplus P_i^{\oplus a_i}$ is the idempotent in the direct summand corresponding to the index $a$.
\end{itemize}
Let $P_*$ (resp. $S_*$) be the projective (resp. simple) $\bar{A}^\zeta_\sigma$-module associated with the vertex $*$. 
The following claim is clear from the construction:
\begin{lem}
The kernel of the natural projection $P_*\to S_*$, as an $A^\zeta_\sigma$-module, is isomorphic to $\III$.
\end{lem}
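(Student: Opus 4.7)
The plan is to unwind the quiver description of $P_*$ and identify $\ker(P_* \to S_*)$ directly with the cokernel of $J$, which is $\III$ by definition.

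First, I would observe that in the enlarged quiver $\bar{Q}$ the vertex $*$ is a source: every newly added arrow $\iota_a$ has tail $*$ and head in $I$, and no arrow of the original quiver $Q$ touches $*$. Consequently, every nontrivial path in $\bar{Q}$ starting at $*$ has the form $q\cdot \iota_a$ for some path $q$ of $Q$ starting at the head of $\iota_a$. Writing $P_* = \bar{A}^\zeta_\sigma\cdot e_*$, this means that as a vector space $P_*$ is spanned by $e_*$ together with products $q\cdot \iota_a$, taken modulo $\mathcal{J}$. The projection $P_* \twoheadrightarrow S_*$ is exactly the quotient killing all paths of positive length, so $\ker(P_*\to S_*)$ is the $A^\zeta_\sigma$-submodule of $P_*$ generated by the classes of the $\iota_a$'s.

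Next, I would construct an $A^\zeta_\sigma$-module map
\[
\phi\colon \bigoplus_i P_i^{\oplus a_i}\;\longrightarrow\;\ker(P_*\to S_*)
\]
sending the generator $e_a$ of the summand indexed by $(i,a)$ to $\iota_a$; this is well defined because $\iota_a\in e_i\bar{A}^\zeta_\sigma e_*$, and left multiplication by paths of $Q$ on $\iota_a$ recovers exactly the $A^\zeta_\sigma$-action on $P_i$. By the previous paragraph $\phi$ is surjective. For each $B\in\bigoplus_i P_i^{\oplus b_i}$ with $J(B)=\sum\gamma_a(B)\cdot e_a$, the defining relation $\sum\gamma_a(B)\cdot\iota_a=0$ in $\mathcal{J}$ shows that $\phi(J(B))=0$. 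Hence $\mr{image}(J)\subset\ker\phi$ and $\phi$ factors through a surjection $\bar\phi\colon\III\twoheadrightarrow\ker(P_*\to S_*)$.

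The main step is injectivity of $\bar\phi$, i.e.\ showing that no unexpected relations are introduced among the $\iota_a$'s. I would argue this by presenting $\bar A^\zeta_\sigma$ as a twisted upper-triangular matrix algebra with one new row/column indexed by $*$: explicitly, one has an algebra decomposition $\bar A^\zeta_\sigma\cdot e_*=\C\cdot e_*\oplus M$ with $M$ the left $A^\zeta_\sigma$-module generated by the $\iota_a$'s modulo \emph{exactly} the relations coming from $J$. Since the generators of $\mathcal{J}$ involving the new arrows are all of the form $\sum\gamma_a(B)\iota_a$ with $B$ ranging over generators of $\ker\bigl(\oplus P_i^{\oplus a_i}\twoheadrightarrow\III\bigr)=\mr{image}(J)$, any further identity in $M$ must be obtained by left multiplication by elements of $A^\zeta_\sigma$, which corresponds on the nose to passing to a larger $A^\zeta_\sigma$-linear combination of the same relations. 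This matches the description of $\mr{image}(J)$ as an $A^\zeta_\sigma$-submodule, so $\ker\phi=\mr{image}(J)$ and $\bar\phi$ is an isomorphism. The potential obstacle is this last bookkeeping: it must be made rigorous that no relation of $A^\zeta_\sigma$ combined with a relation $\sum\gamma_a(B)\iota_a=0$ produces a collapse on the $e_*$-component; this is automatic because there is no path of $\bar Q$ ending at $*$, so the $e_*$-summand is unaffected.
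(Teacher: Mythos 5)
Your argument is correct and is exactly the verification the paper has in mind when it declares the lemma ``clear from the construction'': since $*$ is a source of $\bar{Q}$, the two-sided ideal generated by the new relations meets $\bar{A}^\zeta_\sigma e_*$ only in the left $A^\zeta_\sigma$-submodule generated by the elements $\sum\gamma_a(B)\iota_a$, so $\ker(P_*\to S_*)$ is the cokernel of $J$, i.e.\ $\III$. No substantive difference from the paper's (omitted) argument; your write-up just supplies the details.
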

According to this lemma, we have the natural isomorphism 
\begin{align*}
\mathrm{Ext}^1_{\bar{A}^\zeta_\sigma(\unu,\ulam)}(S_*,V)&\simeq 
\mathrm{Hom}_{\bar{A}^\zeta_\sigma(\unu,\ulam)}(\mathrm{Ker}(P_*\to S_*),V)\\
&\simeq
\mathrm{Hom}_{{A}^\zeta_\sigma}(\III,V).
\end{align*}

Moreover, $s\in \mathrm{Hom}_{{A}^\zeta_\sigma}(\III,V)$ is surjective if and only if the $\bar{A}^\zeta_\sigma(\unu,\ulam)$-module $V_s$ is generated by $(V_s)_*$, where $V_s$ is given by the extension
\[
0\to V\to V_s\to S_*\to 0
\]
corresponding to $s$.

Take 
\begin{align*}
{\theta}_{\mathrm{cyc}}&\in (K_{\mathrm{num}}(\mathrm{mod}\,\bar{A}^\zeta_\sigma(\unu,\ulam))\otimes \R)^*\\
&\simeq (K_{\mathrm{num}}(\mathrm{mod}\,A^\zeta_\sigma)\otimes \R)^*\oplus \R
\end{align*}
such that 
\[
{\theta}_{\mathrm{cyc}}\cdot (\mathbf{v},1)=0,\quad ({\theta}_{\mathrm{cyc}})_i>0\ (i\in I).
\]
Then, the surjectivity condition above is equivalent to ${\theta}_{\mathrm{cyc}}$-stability.  
Hence we can construct the moduli space $\MMM$ as King's moduli space of ${\theta}_{\mathrm{cyc}}$-stable $\bar{A}^\zeta_\sigma(\unu,\ulam)$-modules with dimension vector $=(\mathbf{v},1)\in K_{\mathrm{num}}(\mathrm{mod}\,\bar{A}^\zeta_\sigma(\unu,\ulam))\simeq \Z^I\oplus\Z$.
\begin{rem}
For $\zeta\in (K_{\mathrm{num}}(\mathrm{mod}\,A_\sigma)\otimes \R)^*$, we can define the moduli space 
$\bar{\mathfrak{M}}^{{\zeta}}_{{A}_\sigma}(\mathbf{v})$ of ${\zeta}$-semistable framed ${A}_\sigma$-modules as in \cite{nagao-nakajima}.
In \textup{\cite{3tcy}}, it is shown that 
\[
\bar{\mathfrak{M}}^{{\zeta}}_{{A}_\sigma}(\mathbf{v})\simeq 
\mathfrak{M}_{\bar{A}^\zeta_\sigma(\uemp,\uemp)}(\mathbf{v},1)\Bigl(\simeq 
\mf{M}^{\mathrm{ncDT}}(\zeta,\uemp,\uemp\,;\vv)\Bigr).
\]
\end{rem}

\subsection{Moduli space via a framed quiver with a potential}\label{subsec_potential}
Assume $\nu=\uemp$.
We put 
\begin{align*}
\bigvee(\theta,\ulam)&:=\{n\in\Z\mid \ulam\circ\theta(n-1/2)=-,\,\ulam\circ\theta(n+1/2)=+\}\\
&\,=\{n(1/2),\ldots,n(K+1/2)\},\\
\bigwedge(\theta,\ulam)&:=\{n\in\Z\mid \ulam\circ\theta(n-1/2)=+,\,\ulam\circ\theta(n+1/2)=-\}\\
&\,=\{n(1),\ldots,n(K)\}, 
\end{align*}
where $n(1/2)<n(1)<\cdots<n(K+1/2)$.
We consider a newer quiver $\hat{Q}=\hat{Q}^\zeta_\sigma(\uemp,\ulam)$ obtained from $Q^\zeta_\sigma$ by adding 
\begin{itemize}
\item an arrow $\iota_a$ from $*$ to $\pi(n(a))$ for $a=1/2,\ldots,K+1/2$, and
\item an arrow $\tau_b$ from $\pi(n(b))$ to $*$ for $b=1,\ldots,K$.
\end{itemize}
We define a new potential $\hat{w}=\hat{w}^\zeta_\sigma(\uemp,\ulam)$ for the new quiver $\hat{Q}$ by
\[
\hat{w}:=w+\sum_{b=1}^{K}\Bigl(\tau_{b}\circ h_{[n(b-1/2),n(b)]}\circ\iota_{b-1/2}-\tau_b\circ h_{[n(b+1/2),n(b)]}\circ\iota_{b+1/2}\Bigr),
\]
where $w$ is the original potential and 
\begin{align*}
h_{[n(b-1/2),n(b)]}&:=h_{\pi(n(b))-1/2}^+\circ\cdots\circ h_{\pi(n(b-1/2))+1/2}^+,\\
h_{[n(b+1/2),n(b)]}&:=h_{\pi(n(b))+1/2}^-\circ\cdots\circ h_{\pi(n(b+1/2))-1/2}^-.
\end{align*}
Let $\hat{A}^\zeta_\sigma(\uemp,\ulam)$ be the Jacobi algebra of the $(\hat{Q},\hat{w})$.

We take ${\theta}_{\mathrm{cyc}}$ as in the previous subsection. 
Since the relations of $\hat{A}^\zeta_\sigma(\uemp,\ulam)$ is obtained by the derivations of the potential, the moduli space $\mathfrak{M}^{\theta_{\mathrm{cyc}}}_{\hat{A}^\zeta_\sigma(\uemp,\ulam)}(\mathbf{v},1)$ is the critical locus of a regular function and admits a symmetric obstruction theory.

We can show the following claim in the same way as in \cite[\S Proposition 4.7]{nagao-nakajima}. 
\begin{prop}
\[
\mathfrak{M}^{\theta_{\mathrm{cyc}}}_{\bar{A}^\zeta_\sigma(\uemp,\ulam)}(\mathbf{v},1)\simeq 
\mathfrak{M}^{\theta_{\mathrm{cyc}}}_{\hat{A}^\zeta_\sigma(\uemp,\ulam)}(\mathbf{v},1).
\]
\end{prop}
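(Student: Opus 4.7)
The plan is to exhibit a natural isomorphism in both directions: from an $\hat{A}^\zeta_\sigma(\uemp,\ulam)$-module to a $\bar{A}^\zeta_\sigma(\uemp,\ulam)$-module forget the $\tau_b$ arrows, and from a $\bar{A}^\zeta_\sigma(\uemp,\ulam)$-module produce an $\hat{A}^\zeta_\sigma(\uemp,\ulam)$-module by setting $\tau_b=0$. The matching between the two sets of relations is essentially bookkeeping once one reads off the Jacobi relations of $\hat w$: differentiating with respect to $\tau_b$ yields
\[
h_{[n(b-1/2),n(b)]}\circ \iota_{b-1/2} \;=\; h_{[n(b+1/2),n(b)]}\circ \iota_{b+1/2},
\]
and, by the explicit description of $\PP = M(\Vmin^{\sigma,\theta;\uemp,\ulam})$ promised in Section~\ref{sec_7}, the indices $n(b)\in\bigwedge(\theta,\ulam)$ enumerate precisely the places where the minimal free presentation of $\PP$ has a defining relation; so the chosen presentation of $\PP$ can be made to give exactly these relations, identifying the $\iota$-part of the two algebras.

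The genuine content of the proof is the vanishing $\tau_b = 0$ on any $\theta_{\mathrm{cyc}}$-stable $\hat{A}^\zeta_\sigma(\uemp,\ulam)$-module with $\dim V_* = 1$. By stability, $V$ is generated from $V_*\simeq\C$ by the arrows of $\hat Q$ other than $\tau_\bullet$, so every element of $V_{\pi(n(b))}$ can be written as a $\C$-linear combination of $(\text{path in }Q^\zeta_\sigma)\cdot\iota_a(v_*)$. It is therefore enough to evaluate $\tau_b$ on such vectors. Differentiating $\hat w$ with respect to an interior arrow $h_j^\pm$ that lies on one of the paths $h_{[n(b\pm 1/2),n(b)]}$ produces a relation of the form (a Jacobi relation of $w$) $=$ (a sum of monomials $\tau_{b'}\!\cdot\!(\text{path})\!\cdot\!\iota_{a'}$). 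Propagating these relations along the paths telescopes the $\tau_b\cdot(\text{path})\cdot\iota_a$ compositions to expressions in the original algebra $A_\sigma^\zeta$ alone, and they must vanish because $w$ itself has no $\tau$-terms. This is precisely the telescoping argument carried out in \cite[Proposition~4.7]{nagao-nakajima}, adapted to the combinatorics of $\bigvee(\theta,\ulam)$ and $\bigwedge(\theta,\ulam)$; carrying out this adaptation carefully — keeping track of which paths the sinks and sources of $\ulam\circ\theta$ produce — is the main obstacle.

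Once $\tau_b = 0$ has been established, the Jacobi relations of $\hat w$ at the original arrows $h_j^\pm$ collapse (since the $\tau$-containing terms of $\hat w$ drop) to exactly the Jacobi relations of $w$, i.e.\ the defining relations of $A_\sigma^\zeta$. Combined with the framing relations $\partial \hat w/\partial \tau_b = 0$ identified with the $\bar{A}$-relations in the first paragraph, this shows that $(\theta_{\mathrm{cyc}}$-stable $\hat{A}$-modules of dimension vector $(\vv,1))$ and $(\theta_{\mathrm{cyc}}$-stable $\bar{A}$-modules of dimension vector $(\vv,1))$ are the same functor on affine test schemes. Since both moduli spaces are constructed as the same King-type GIT quotient of this common functor, the isomorphism of moduli schemes follows; the only thing needing verification in families is that the argument forcing $\tau_b=0$ is linear in the representation, which it is, so the identification extends to flat families over arbitrary base.
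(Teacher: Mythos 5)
Your argument is correct and follows the same route as the paper, whose entire proof is the remark that the claim is shown ``in the same way as \cite[Proposition 4.7]{nagao-nakajima}'': match the relations $\partial\hat w/\partial\tau_b=0$ with the defining relations of $\bar A^\zeta_\sigma(\uemp,\ulam)$ coming from the chosen presentation of $\PP$, show that $\theta_{\mathrm{cyc}}$-stability forces every $\tau_b$ to vanish, and note that the remaining Jacobi relations of $\hat w$ then collapse to those of $\bar A^\zeta_\sigma(\uemp,\ulam)$, so the two stability conditions and moduli functors coincide. The only step you should make explicit is why $V$ is generated from $V_*$ by the arrows \emph{other than} $\tau_\bullet$: the span of all non-$\tau$ paths applied to $v_*$, together with $V_*$ itself, is already an $\hat A^\zeta_\sigma(\uemp,\ulam)$-submodule because each $\tau_b$ lands back in $V_*$, and hence equals $V$ by stability.
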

Combined with the result in the previous subsection, we see that $\mf{M}^{\mathrm{ncDT}}(\zeta,\underline{\emptyset},\underline{\lambda}\,;\vv)$ admits a symmetric obstruction theory.

\begin{NB}
For $\zeta\in (K_{\mathrm{num}}(\mathrm{mod}\,A_\sigma(\unu,\ulam))\otimes \R)^*$, we can define the moduli space $\mathfrak{M}^{\bar{\zeta}}_{\hat{A}_\sigma(\uemp,\ulam)}(\mathbf{v},1)$ as in the previous subsection.
\begin{defn}
\[
\hat{\mathcal{Z}}^{\mathrm{ncDT}}_{\sigma,\zeta,\uemp,\ulam}(q_0,\ldots,q_{L-1}):=
\sum_\mathbf{v} e\Bigl(\mathfrak{M}^{\bar{\zeta}}_{\hat{A}_\sigma(\uemp,\ulam)}(\mathbf{v},1)\Bigr)\cdot \mathbf{q}^{\mathbf{v}}
\]
\end{defn}
Due to \cite[Theorem 7.5]{joyce-song}. we can apply the argument in \cite{3tcy} (or the result in \cite{joyce-4}) to get the wall-crossing formula:
\begin{prop}
The normalized generating function
\[
\hat{\mathcal{Z}}^{\mathrm{ncDT}}_{\sigma,\theta,\uemp,\ulam}(q_0,\ldots,q_{L-1})/\mathcal{Z}^{\mathrm{ncDT}}_{\sigma,\theta,\uemp,\uemp}(q_0,\ldots,q_{L-1})
\]
does not depend on $\theta$.
\end{prop}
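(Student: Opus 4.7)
The plan is to transport a canonical symmetric obstruction theory from the quiver-with-potential side through the two identifications already established. From \S\ref{subsec_proof} we have
\[
\mf{M}^{\mathrm{ncDT}}(\zeta,\uemp,\ulam\,;\vv)\simeq \mathfrak{M}^{\theta_{\mathrm{cyc}}}_{\bar{A}^\zeta_\sigma(\uemp,\ulam)}(\mathbf{v},1),
\]
and by the preceding Proposition this is further isomorphic to $\mathfrak{M}^{\theta_{\mathrm{cyc}}}_{\hat{A}^\zeta_\sigma(\uemp,\ulam)}(\mathbf{v},1)$. It therefore suffices to produce a symmetric obstruction theory on the latter moduli and pull it back through these isomorphisms.

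For the second step I would realize the $\hat{A}$-moduli as a global critical locus, following the template of Szendroi \cite{szendroi-ncdt} and \cite{nagao-nakajima}. Let $R(\hat{Q};\mathbf{v},1)$ be the smooth affine space parametrizing representations of the underlying quiver $\hat{Q}$ of dimension vector $(\mathbf{v},1)$, with no relations imposed. The trace of the potential defines a $\prod_i\mathrm{GL}(v_i)$-invariant regular function $\mathrm{Tr}(\hat{w})$; its scheme-theoretic critical locus is exactly the locus of $\hat{Q}$-representations satisfying the Jacobi relations of $\hat{A}^\zeta_\sigma(\uemp,\ulam)$, since the framing vertex has dimension $1$ and so only $\prod_i\mathrm{GL}(v_i)$ acts. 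Restricting to the $\theta_{\mathrm{cyc}}$-stable open subvariety and passing to the GIT quotient by this gauge group exhibits $\mathfrak{M}^{\theta_{\mathrm{cyc}}}_{\hat{A}^\zeta_\sigma(\uemp,\ulam)}(\mathbf{v},1)$ as the critical locus of the descended regular function on a smooth ambient quotient.

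With this in hand, the third step is to invoke the standard fact (used already in \cite{szendroi-ncdt} and based on Behrend-Fantechi) that the critical locus of a regular function on a smooth scheme $X$ carries a canonical symmetric perfect obstruction theory, given by the Hessian two-term complex $[\,T_X|_Z\to \Omega_X|_Z\,]$. Transporting this through the isomorphisms of the first paragraph endows $\mf{M}^{\mathrm{ncDT}}(\zeta,\uemp,\ulam\,;\vv)$ with the advertised symmetric obstruction theory.

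The main obstacle to verify is the cyclic well-posedness and Jacobi-matching for $\hat{w}$: one must check that the added terms $\tau_b\circ h_{[n(b-1/2),n(b)]}\circ\iota_{b-1/2}-\tau_b\circ h_{[n(b+1/2),n(b)]}\circ\iota_{b+1/2}$ are cyclic, and that differentiating them with respect to $\iota_a$ and $\tau_b$ reproduces precisely the extra framing-type relations appearing in the definition of $\bar{A}^\zeta_\sigma(\uemp,\ulam)$, while differentiating with respect to the original arrows $h^{\pm}_j,r_i$ recovers the Jacobi relations of $A^\zeta_\sigma$ unchanged. This bookkeeping is the genuine content; once it is in place, the obstruction theory is constructed formally and the conclusion follows, consistent with the unframed case $\ulam=\uemp$ handled in \cite{3tcy}.
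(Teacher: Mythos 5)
There is a genuine gap: your argument establishes a different statement from the one asked. What you construct --- the presentation of $\mathfrak{M}^{\theta_{\mathrm{cyc}}}_{\hat{A}^\zeta_\sigma(\uemp,\ulam)}(\mathbf{v},1)$ as the critical locus of $\mathrm{Tr}(\hat{w})$ on a smooth ambient space, the resulting symmetric obstruction theory, and its transport to $\mf{M}^{\mathrm{ncDT}}(\zeta,\uemp,\ulam\,;\vv)$ --- is precisely the content of the surrounding discussion in \S\ref{subsec_potential} and is an \emph{input} to the proposition, not the proposition itself. The statement to be proved is that the ratio $\hat{\mathcal{Z}}^{\mathrm{ncDT}}_{\sigma,\theta,\uemp,\ulam}/\mathcal{Z}^{\mathrm{ncDT}}_{\sigma,\theta,\uemp,\uemp}$ is independent of the chamber $\theta$, i.e.\ a wall-crossing invariance of a normalized generating function. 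Nothing in your write-up compares the moduli spaces, or their (weighted) Euler characteristics, for two different chambers $C_\theta$ and $C_{\theta\circ\theta_i}$, so the conclusion of the proposition is never reached.

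The paper's own justification is exactly the step you omit: the critical-locus description guarantees that the Behrend-function identities of Joyce--Song \cite[Theorem 7.5]{joyce-song} apply to these framed quiver-with-potential moduli, and then the wall-crossing machinery of \cite{3tcy} (or \cite{joyce-4}) shows that crossing a single wall $W_{\alpha(\theta,i)}$ multiplies the generating function by a factor determined only by the counting invariants of semistable objects of class proportional to $\alpha(\theta,i)$ --- a factor that is the \emph{same} for the $\ulam$-framed series in the numerator and the unframed series in the denominator, so the ratio is unchanged wall by wall. To complete your proof you would need to add this second half: either run the Joyce--Song/\cite{3tcy} wall-crossing argument on the critical loci you constructed, or, in the Euler-characteristic setting, deduce the $\theta$-independence from the explicit vertex-operator formula of Theorem \ref{thm_main}, where the only $\theta$-dependent factor $\prod_{\alpha\in\Delta^{\mathrm{re},+},\,\theta(\alpha)<0}(1+\sigma(\alpha)\mathbf{q}^\alpha)^{\sigma(\alpha)\alpha_0}$ is visibly independent of $\ulam$ and hence cancels in the normalization (this is how Corollary \ref{cor_WC} is obtained). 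As it stands, your proposal proves admissibility of the tools but not the claim.
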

\begin{cor}
\[
\hat{\mathcal{Z}}^{\mathrm{ncDT}}_{\sigma,\zeta,\uemp,\ulam}(q_0,\ldots,q_{L-1})
=\mathcal{Z}^{\mathrm{ncDT}}_{\sigma,\zeta,\uemp,\ulam}(q_0,\ldots,q_{L-1})
\]
\end{cor}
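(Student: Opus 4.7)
The plan is to follow the blueprint of \cite[Proposition 4.7]{nagao-nakajima}: show that on the moduli space defined by $\hat{A}^\zeta_\sigma(\uemp,\ulam)$ the dual framing arrows $\tau_b$ are forced to vanish, and that once they vanish the Jacobi relations of $(\hat Q,\hat w)$ collapse precisely to the relations imposed on $\bar{A}^\zeta_\sigma(\uemp,\ulam)$. Thus both GIT problems parametrize the same objects.

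First I would unpack the Jacobi relations of $\hat w$. The partial derivative with respect to $\tau_b$ yields
\[
h_{[n(b-1/2),n(b)]}\circ \iota_{b-1/2}\;=\;h_{[n(b+1/2),n(b)]}\circ \iota_{b+1/2}
\]
for every $b=1,\dots,K$. These are exactly the equations that express that the framing maps $\iota_a\colon V_*\to V_{\pi(n(a))}$ descend through the cokernel of $J$ in the presentation $\bigoplus P_i^{\oplus b_i}\to\bigoplus P_i^{\oplus a_i}\to\III\to 0$ used to define $\bar A^\zeta_\sigma(\uemp,\ulam)$. Dually, the derivatives $\partial_{\iota_{b\pm1/2}}\hat w$ produce terms of the shape $\pm h\circ\tau_b$, and the derivatives with respect to the original arrows of $Q$ produce the original Jacobi relations of $A^\zeta_\sigma$ together with correction terms that are sums of compositions of the form $\iota_a\circ\tau_b$ (landing in $V_{\pi(n(a'))}$).

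Second I would show $\tau_b=0$ for any $\theta_{\mathrm{cyc}}$-stable point. Here I use that $\theta_{\mathrm{cyc}}$-stability forces the module to be generated by $V_*=\C$; consequently the image of any $\tau_b\colon V_{\pi(n(b))}\to V_*$ is determined, after composing with elements of the path algebra starting at $*$, by the linear functional $\tau_b$ applied to words built from the $\iota_a$'s. The derivative relations $\partial_{\iota}\hat w=0$ then give a system of equations of the form $\tau_b\circ(\text{word in } h^\pm)=0$ on a generating set, and a short combinatorial check along the intervals $[n(b-1/2),n(b+1/2)]$ (the part of the chain between consecutive peaks of $\ulam\circ\theta$) shows these equations span the dual of $V_{\pi(n(b))}$, hence $\tau_b=0$. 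This is the step I expect to be the main obstacle, since it requires one to trace the Jacobian calculus carefully through the explicit form of $\hat w$ and exploit cyclicity at the framing vertex; one has to verify that nothing goes wrong at the endpoints where the chain wraps around.

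Third, once $\tau_b=0$ is established the correction terms in the remaining Jacobi relations disappear and the derivatives of $\hat w$ with respect to the arrows of $Q$ recover the original relations of $A^\zeta_\sigma$, while the relations $\partial_{\tau_b}\hat w=0$ are precisely the framing relations generating the ideal $\mathcal J$ that defines $\bar A^\zeta_\sigma(\uemp,\ulam)$ in \S\ref{subsec_proof}. Thus the locus of $\theta_{\mathrm{cyc}}$-stable $\hat A^\zeta_\sigma(\uemp,\ulam)$-modules of dimension $(\mathbf v,1)$ is cut out in the representation space by exactly the same equations as that of $\theta_{\mathrm{cyc}}$-stable $\bar A^\zeta_\sigma(\uemp,\ulam)$-modules of dimension $(\mathbf v,1)$, with the same stability open condition. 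Finally I would upgrade this set-theoretic identification to an isomorphism of schemes by checking it on the level of families: a flat family on the $\bar A$-side produces, by setting $\tau_b=0$, a flat family on the $\hat A$-side, and conversely the identification above gives a morphism in the other direction; the two compositions are the identity on the defining data, so the GIT quotients are canonically isomorphic.
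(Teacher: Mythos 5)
Your argument establishes the isomorphism $\mathfrak{M}^{\theta_{\mathrm{cyc}}}_{\bar{A}^\zeta_\sigma(\uemp,\ulam)}(\mathbf{v},1)\simeq \mathfrak{M}^{\theta_{\mathrm{cyc}}}_{\hat{A}^\zeta_\sigma(\uemp,\ulam)}(\mathbf{v},1)$ --- the vanishing of the $\tau_b$ on $\theta_{\mathrm{cyc}}$-stable points and the collapse of the Jacobi relations of $\hat w$ to the framing relations of $\bar{A}^\zeta_\sigma(\uemp,\ulam)$ --- and this is correct; it is exactly the content of the unnumbered Proposition in \S\ref{subsec_potential}, which the paper disposes of by citing \cite[Proposition 4.7]{nagao-nakajima}. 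But that is not what the Corollary asserts. The left-hand side $\hat{\mathcal{Z}}^{\mathrm{ncDT}}_{\sigma,\zeta,\uemp,\ulam}$ is built from the moduli spaces $\mathfrak{M}^{\bar\zeta}_{\hat{A}_\sigma(\uemp,\ulam)}(\mathbf{v},1)$ of $\bar\zeta$-semistable framed modules over the \emph{single, untilted} Jacobi algebra, with the stability parameter $\zeta$ moving through chambers; the right-hand side is built chamber-by-chamber from the \emph{tilted} algebras $A^\zeta_\sigma$ with the \emph{fixed} cyclic stability $\theta_{\mathrm{cyc}}$. Both of the moduli problems you compare live on the tilted side with cyclic stability, so your argument never engages the $\bar\zeta$-stability condition that defines $\hat{\mathcal{Z}}$, nor the question of how the framed quiver with potential (whose extra arrows $\iota_a,\tau_b$ are attached at vertices determined by $\theta$ and $\ulam$) transforms under the derived equivalences between chambers.

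The paper's route is designed precisely to avoid constructing that identification directly: by \cite[Theorem 7.5]{joyce-song} the normalized series $\hat{\mathcal{Z}}^{\mathrm{ncDT}}_{\sigma,\theta,\uemp,\ulam}/\mathcal{Z}^{\mathrm{ncDT}}_{\sigma,\theta,\uemp,\uemp}$ is independent of the chamber (this is the Proposition immediately preceding the Corollary), the same independence for $\mathcal{Z}^{\mathrm{ncDT}}$ is Corollary \ref{cor_WC}, and the two series agree in a reference chamber; equality in every chamber follows. To close the gap along your route you would have to prove that $\bar\zeta$-stable framed $\hat{A}_\sigma(\uemp,\ulam)$-modules correspond under tilting to $\theta_{\mathrm{cyc}}$-stable framed $\hat{A}^\zeta_\sigma(\uemp,\ulam)$-modules for every chamber --- a statement known from \cite{3tcy} only for $\ulam=\uemp$ --- and that is the genuinely missing step, not the vanishing of the $\tau_b$ that you single out as the main obstacle.
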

\end{NB}

\section{Remarks and appendices}
\subsection{Weighted Euler characteristic}\label{subsec_w}
Let $\nu\colon \mf{M}^{\mathrm{ncDT}}(\zeta,\underline{\emptyset},\underline{\lambda}\,;\vv)\to \Z$ be the constructible function defined in \cite{behrend-dt} (Behrend function).
We define the generating function
\[
\mathcal{Z}^{\mathrm{ncDT}}_{\sigma,\zeta\sss;\sss\underline{\emptyset},\underline{\lambda}}(q_0,\ldots,q_{L-1}):=
\sum_\mathbf{v}\Bigl(\sum_{n\in\Z}n\cdot\chi(\nu^{-1}(n))\Bigr)
\cdot (\mathbf{q}_\theta)^{\mathbf{v}}
\]
where $(\mathbf{q}_\theta)^{\mathbf{v}}:=\prod(\mathbf{q}^{\alpha(\theta,i)})^{v_i}$ as in \S \ref{subsec_14}.
The Behrend function is defined for any scheme over $\mathbb{C}$. 
In \cite{behrend-dt}, Behrend showed that if a proper scheme has a symmetric obstruction theory then the virtual counting, which is defined by integrating the constant function $1$ over the virtual fundamental cycle, coincides with the weighted Euler characteristic weighted by the Behrend function as above.
Based on this result, he proposed to define the virtual counting for a non-proper variety with a symmetric obstruction theory as the weighted Euler characteristic.

We can apply Behrend-Fantechi's result \cite[Theorem3.4]{behrend-fantechi} to compute the weighted Euler characteristic by torus localization.
Using the ``Kozsul like'' complex (\cite[Equation (140)]{joyce-song}) 
and the same argument as \cite[Theorem 7.1]{ncdt-brane}, we can compute the parity of the dimension of Zariski tangent space at a torus fixed point, which determines the contribution of the torus fixed point. 
As a result we get
\[
\mathcal{Z}^{\mathrm{ncDT}}_{\sigma,\zeta\sss;\sss\underline{\emptyset},\underline{\lambda}}(q_0,\ldots,q_{L-1})=
\mathcal{Z}^{\mathrm{E-ncDT}}_{\sigma,\zeta\sss;\sss\underline{\emptyset},\underline{\lambda}}(p_0,\ldots,p_{L-1})
\]
under the variable change given by 
\[
\mathbf{p}^{\alpha(\theta,i)}=(-1)^{\hat{Q}^\zeta_\sigma(\uemp,\ulam)_{i\to i}+\hat{Q}^\zeta_\sigma(\uemp,\ulam)_{i\to *}+\hat{Q}^\zeta_\sigma(\uemp,\ulam)_{*\to i}}\mathbf{q}^{\alpha(\theta,i)}
\]
where $\hat{Q}^\zeta_\sigma(\uemp,\ulam)_{i\to j}$ is the number of arrows in the quiver $\hat{Q}^\zeta_\sigma(\uemp,\ulam)$ from the vertex $i$ to the vertex $j$.

\subsection{Pandharipande-Thomas invariants and wall-crossing}\label{subsec_NCPT}
As we mentioned at the end of \S \ref{subsec_13}, we have worked on the area $\{\zeta\mid \zeta\cdot \delta<0\}$ up to now.
In this subsection, we make some comments on the area $\{\zeta\mid \zeta\cdot \delta>0\}$, i.e. $0<t<1/2$.
We have a natural bijection between $\Theta$ and the set of chambers in the area $\{\zeta\mid \zeta\cdot \delta>0\}$ as well.
An element $\zeta_\theta$ in the chamber $C^\theta$ corresponding to $\theta\in\Theta$ satisfies the following condition:
\[
\alpha_{[h,h']}\cdot \zeta_\theta>0
\iff
\theta(h)<\theta(h')
\]
for any $h<h'$.

Note that $\D{t}{0}$ is a left admissible full subcategory of $\dmoda$ for $0<t<1$.
Let $\tenchi\D{t+1}{t}$ denote the core of t-structure given from $\moda$ by tilting with respect to $\D{t}{0}$\footnote{This is different from $\Dinf{t}{t-1}[1]$ given in \S \ref{subsec_stability_and_tilting}.} and we put
\[
\tenchi\hspace{-2pt}\catAinfzeta:=\tenchi\D{t+1}{t}[-1].
\]
(see Definition \ref{defn_catA}).
As in Proposition \ref{prop_t_str} we have
\[
\tenchi\hspace{-2pt}\catAinfzeta\simeq \mathrm{mod}_{\mathrm{fin}}A_\sigma^\zeta.
\]
We put
\[
\tenchi\III:=H^0_{\tenchi\hspace{-2pt}\catAinfzeta}(\II)
\]
(see \eqref{eq_III}) and 
let $\mathfrak{M}^{\mathrm{ncPT}}(\zeta,\underline{\nu},\underline{\lambda}\,;\vv)$ denote the moduli space of quotient objects $V$ of $\tenchi\III$ in $\tenchi\hspace{-2pt}\catAinfzeta$ with $[V]=\vv$:
(see Definition \ref{defn_pair} and Theorem \ref{thm_moduli}).
\begin{ex}\label{ex_finite_pyramid}
Assume that $L_+=L_-=1$ $\unu=\uemp$ and $\ulam=\uemp$. 
Take $\zeta^{m.+}\in \Z^2$ such that 
\[
\zeta^{m,+}_0<\zeta^{m,+}_1,\quad
m\zeta^{m,+}_0+(m-1)\zeta^{m,+}_1<0,\quad
(m+1)\zeta^{m,+}_0+m\zeta^{m,+}_1>0.
\]
as in \textup{\cite[\S 4.1]{nagao-nakajima}}. 
Then we have
\[
P^{\zeta^{m,+}}_{\uemp.\uemp}=\mathcal{O}_C^{(m,m-1,\ldots,1)}
\]
where $\mathcal{O}_C^{(m,m-1,\ldots,1)}$ is given by the exact sequence
\[
0\to \mathcal{I}(\uemp,\uemp;(m,m-1,\ldots,1))\to \mathcal{O}_{Y_\sigma}\to \mathcal{O}_C^{(m,m-1,\ldots,1)}\to 0.
\]
As an $A_{\sigma\circ\theta}$-module, $\mathcal{O}_C^{(m,m-1,\ldots,1)}$ is given by ``the finite type pyramid with length $m$'' (\textup{\cite{chuang_pan}}).
Hence the moduli $\mathfrak{M}^{\mathrm{ncPT}}(\zeta^{m,+},\uemp,\uemp\,;\vv)$ coincides with the one we denoted by $\mathfrak{M}^{A_m^+}_{\zeta_{\mathrm{cyclic}}}(\vv)$ in \textup{\cite[\S 4]{nagao-nakajima}}.
\end{ex}
Take the limit of $\theta$, the following moduli appears:
\[
\bigl\{s\colon\II\to F\mid F:\text{ pure of dimension $1$},\ \dim\ker(s)=1\bigr\}.
\]
This is a generalization of the moduli of stable pairs in the sense of Pandharipande-Thomas (\cite{pt1}).
Thus let us call the Euler characteristic of the moduli space $\mathfrak{M}^{\mathrm{ncPT}}(\zeta,\underline{\nu},\underline{\lambda}\,;\vv)$ as the open non-commutative Pandharipande-Thomas (open ncPT in short) invariant temporarily.

On the other hand, we define the finite type transition of Young diagrams (and hence the finite type crystal model) of type $(\type)$ by replacing $\theta$ by $(-1)\circ\theta$, where $(-1)\colon\Zh\to\Zh$ is the multiplication of $(-1)$. 
The generating function is described by the operator
\[
\prod^{(-1)\circ\theta}_{h\in\Zh}\Gamma^{\sigma(h)}_{\lambda(h)}(q_h)
\]
and we can compute it in the same way as \S \ref{subsec_computation}\footnote{A similar crystal melting model and computation are given in \cite{Sulkowski}.}.

The author expects that as an $A_\sigma^\zeta$-module $\tenchi\III$ is given by the grand state finite type crystal and hence we can compute the generating function of open ncPT invariants explicitly.
We can check this is true for some concrete examples (see Example \ref{ex_finite_pyramid}). 
If this is true in general, 
\begin{itemize}
\item we get open version of DT-PT correspondence in our setting, and 
\item we can realize the normalized generating function appearing in Corollary \ref{cor_WC} as the generating function of Euler characteristics of the moduli spaces 
\[
\mathfrak{M}^{\mathrm{ncPT}}(\zeta^{\mathrm{triv}},\underline{\nu},\underline{\lambda}\,;\vv)
\]
for $\zeta^{\mathrm{triv}}\in\R^I$ such that $(\zeta^{\mathrm{triv}})_i>0$ for any $i$.
\end{itemize}

\begin{NB}
\subsection{Invariants via cluster category}\label{sec_6}
In this subsection, we use the standard notations in cluster category theory, which are different from the ones in the other part of this paper. 
Let $(Q,w)$ be a quiver with a potential. 
In \cite{amiot}, it is assumed that the Jacobi algebra $J(Q,w)$ is finite dimensional. 
Let $\Gamma=\Gamma(Q,w)$ be the Ginzburg's dg algebra. We denote by $\mathcal{D}\Gamma$ the derived category of dg $\Gamma$-modules and by $\mathcal{D}^b\Gamma$ its full subcategory formed by the dg $\Gamma$-modules whose cohomology is of finite total dimension. 
Let $\mathrm{per}\Gamma$ denote the category of perfect dg $\Gamma$-modules.
Note that $\mathcal{D}^b\Gamma$ is a full subcategories of $\mathrm{per}\Gamma$ since $\Gamma$ is homologically smooth.
\begin{defn}[\protect{\cite[\S 3.3]{amiot}}]
The {\it cluster category} $\mathcal{C}=\mathcal{C}_{(Q,w)}$ associated to $(Q,w)$ is the quotient of triangulated categories $\mathrm{per}\Gamma/\mathcal{D}^b\Gamma$.
\end{defn}
From our point of view, the image of the projection to $\mathcal{C}$ determines the asymptotic behaviour of an element in $\mathrm{per}\Gamma$, and elements on a fiber of the projection is what we want to count.
\begin{defn}[\protect{\cite[\S 2.2]{amiot}}]
The {\it fundamental domain} $\mathcal{F}$ is the following full subcategory of $\mathrm{per}\Gamma$\textup{;}
\[
\mathcal{F}:=\mathcal{D}_{\leq 0}\cap{}^\bot\mathcal{D}_{\leq -2}\cap \mathrm{per}\Gamma.
\]
\end{defn}
Note that $\mathcal{C}$ is intrinsic to the triangulated category $\mathrm{per}\Gamma$, but $\mathcal{F}$ is not.  
\begin{prop}[\protect{\cite[Proposition 2.9]{amiot}}]
The projection functor $\pi\colon \mathrm{per}\Gamma\to \mathcal{C}$ induces a $\C$-linear equivalence $\Psi$ between $\mathcal{F}$ and $\mathcal{C}$. 
\end{prop}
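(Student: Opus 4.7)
The plan is to establish $\Psi := \pi|_{\mathcal{F}}$ as an equivalence by verifying essential surjectivity and full faithfulness separately, exploiting the t-structure $(\mathcal{D}_{\leq 0}, \mathcal{D}_{\geq 0})$ on $\mathcal{D}\Gamma$ together with the hypothesis that $H^0(\Gamma) = J(Q,w)$ is finite-dimensional.

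For essential surjectivity, I would show that for every $X \in \mathrm{per}\,\Gamma$ there exists a distinguished triangle $X' \to X \to Z \to X'[1]$ with $X' \in \mathcal{F}$ and $Z \in \mathcal{D}^b\Gamma$, so that $\pi(X) \cong \pi(X')$ in $\mathcal{C}$. The construction is in two steps. First, using that $J(Q,w)$ is finite-dimensional, each cohomology $H^i X$ is a finite-dimensional $J(Q,w)$-module, so a sufficiently high truncation $\tau_{\geq N} X$ lies in $\mathcal{D}^b\Gamma$; the triangle $\tau_{\leq N-1}X \to X \to \tau_{\geq N}X \to$ places us in $\mathcal{D}_{\leq 0} \cap \mathrm{per}\,\Gamma$ after a shift. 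Second, to achieve the orthogonality $X' \in {}^{\perp}\mathcal{D}_{\leq -2}$, I would iteratively kill maps $X \to W$ with $W \in \mathcal{D}_{\leq -2}$: each such $W$ may be taken in $\mathcal{D}^b\Gamma$ thanks again to finite-dimensionality of the cohomologies involved, and forming the cone of a universal such map shifts us strictly closer to $\mathcal{F}$ while keeping the cone in $\mathcal{D}^b\Gamma$. The process terminates because $\mathrm{per}\,\Gamma$-objects have bounded, finite-dimensional cohomology.

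For full faithfulness, I would compute $\mathrm{Hom}_{\mathcal{C}}(\pi X, \pi Y)$ for $X,Y\in\mathcal{F}$ as the colimit of roofs $X \xleftarrow{s} X' \to Y$ with $\mathrm{cone}(s) \in \mathcal{D}^b\Gamma$. Injectivity of $\mathrm{Hom}_{\mathrm{per}\,\Gamma}(X,Y) \to \mathrm{Hom}_{\mathcal{C}}(\pi X,\pi Y)$ reduces to showing that any map $f\colon X \to Y$ factoring as $X \to Z \to Y$ with $Z\in \mathcal{D}^b\Gamma$ vanishes; this uses the defining orthogonality of $\mathcal{F}$ — any $Z\in\mathcal{D}^b\Gamma$ is a finite extension of shifts of simples $S_i[k]$, and for $k\geq 2$ one has $\mathrm{Hom}(X, S_i[k])=0$ by $X \in {}^{\perp}\mathcal{D}_{\leq -2}$, while for $k\leq 1$ one controls the composition using that $Y \in \mathcal{D}_{\leq 0}$ and the 3-Calabi--Yau duality on $\mathcal{D}^b\Gamma$. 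Surjectivity amounts to straightening each roof: given $s\colon X' \to X$ with $\mathrm{cone}(s)\in\mathcal{D}^b\Gamma$, the orthogonality of $X$ to $\mathcal{D}_{\leq -2}$ together with the essential-surjectivity replacement applied to $X'$ lets one replace the roof by an honest morphism $X \to Y$.

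The main obstacle is the full faithfulness step, and within it the vanishing arguments that use the Calabi--Yau structure: the 3-CY duality is what converts the half-open orthogonality condition defining $\mathcal{F}$ into genuine \emph{two-sided} vanishing of morphism spaces to and from $\mathcal{D}^b\Gamma$, thereby forcing roofs to collapse. The essential-surjectivity step, by contrast, is a straightforward successive-approximation argument once one accepts the finiteness of $J(Q,w)$.
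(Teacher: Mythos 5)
First, a point of comparison: the paper does not prove this statement at all. It is quoted verbatim from Amiot (\cite[Proposition 2.9]{amiot}) as a known result, and in fact the whole cluster-category subsection containing it sits in a source block that is excluded from the compiled document. So there is no proof in the paper to measure your argument against; the relevant benchmark is Amiot's original proof, whose overall strategy --- reduce to the canonical t-structure on $\mathcal{D}\Gamma$, use finite-dimensionality of $J(Q,w)=H^0\Gamma$ to place truncations $\tau_{\geq N}X$ of perfect objects in $\mathcal{D}^b\Gamma$, and use the relative $3$-Calabi--Yau duality $\mathrm{Hom}(N,Y)\simeq D\,\mathrm{Hom}(Y,N[3])$ for $N\in\mathcal{D}^b\Gamma$ to get the two-sided $\mathrm{Hom}$-vanishing between $\mathcal{F}$ and $\mathcal{D}^b\Gamma$ --- your outline does correctly identify.

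That said, two steps of your sketch would not survive being written out. In essential surjectivity, the termination of your ``iteratively kill maps to $\mathcal{D}_{\leq -2}$'' procedure is justified by the claim that objects of $\mathrm{per}\,\Gamma$ have bounded, finite-dimensional cohomology; this is false whenever $\mathcal{C}\neq 0$, since $\Gamma$ itself then has nonzero $H^{i}$ in arbitrarily negative degrees (otherwise $\Gamma\in\mathcal{D}^b\Gamma$ and the quotient collapses). Moreover a ``universal map'' to the non-small subcategory $\mathcal{D}_{\leq -2}$ does not obviously exist, so the inductive step itself needs to be replaced --- Amiot instead produces, modulo $\mathcal{D}^b\Gamma$, a two-term $\mathrm{add}\,\Gamma$-presentation of $X$ (which is exactly the characterization of $\mathcal{F}$ this paper uses when it says the presentation of $P^{\zeta}_{\underline{\nu},\underline{\lambda}}$ is $\Psi^{-1}(\mathcal{I}_{\underline{\nu},\underline{\lambda}})$). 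In full faithfulness, arguing ``$Z$ is an extension of shifts of simples, and each composite $X\to S_i[k]\to Y$ vanishes'' does not imply that a map factoring through $Z$ vanishes; you need the truncation triangle $\tau_{\leq -2}Z\to Z\to\tau_{\geq -1}Z$ and the vanishing of $\mathrm{Hom}(X,\tau_{\leq -2}Z)$ and $\mathrm{Hom}(X,(\tau_{\leq -2}Z)[1])$ to push the factorization into $\tau_{\geq -1}Z$, and only then apply the Calabi--Yau duality to kill $\mathrm{Hom}(\tau_{\geq -1}Z,Y)$; a similar normalization of the cone is needed before your roof-straightening argument applies. So the proposal is a fair roadmap of Amiot's proof but is not yet a proof.
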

The presentation of $\III$ we took in \S \ref{subsec_proof} is $\Psi^{-1}(\II)$.

Assume that $H^i(\Gamma)=0$ for $i<0$, i.e. the Jacobi algebra $J$ is $3$-Calabi-Yau.
For an element $I\in\mathcal{C}$, take 
\[
\Psi^{-1}=(P_1\overset{f}{\to} P_0)
\]
where $P_1$ and $P_0$ is projective $J$-modules. 
We define the moduli space $\mathfrak{M}^{J}(I,\mathbf{v})$ by 
\[
\mathfrak{M}^{J}(I,\mathbf{v}):=
\{P_1\overset{f}{\to} P_0\to V\to 0\mid [V]=\mathbf{v}\}
\]
where $\mathbf{v}\in \Z^I$. Here an isomorphism of objects is given by the following diagram:
\[
\xymatrix{
P_1 \ar[r]^f \ar[d]^{\mathrm{id}}  & P_0 \ar[r] \ar[d]^{\mathrm{id}}  & V \ar[d]^{\simeq}\\
P_1 \ar[r]^f & P_0 \ar[r] & V'. \\
}
\]
Note that this does not depend on the choice of $f$ and we can construct $\mathfrak{M}^{J}(I,\mathbf{v})$ in the same way as \S \ref{subsec_proof}.
\begin{defn}
We define the generating function $\mathcal{Z}^J(I;\mathbf{q})$ by 
\[
\mathcal{Z}^J(I;\mathbf{q}):=\sum_{\mathbf{v}}e(\mathfrak{M}^{J}(I,\mathbf{v}))\cdot \mathbf{q}^{\mathbf{v}}.
\]
\end{defn}
As we mentioned at the bottom of the previous subsection, the invariants we studied in this paper is a special case of the invariants above. 
The following question is a generalization of the result of this paper:
\begin{ques}
Given a derived equivalence between $\Gamma$ and $\Gamma'$, study the relation between $\mathcal{Z}^J(I;\mathbf{q})$ and $\mathcal{Z}^{J'}(I;\mathbf{q})$.
\end{ques}
A typical example of such derived equivalences is the one given by a mutation of a quiver with a potential (\cite{dong-keller,keller-completion}).
\end{NB}

\subsection{Proof of Proposition \ref{prop_sec_6}}\label{sec_7}
\begin{lem}[\protect{\cite[Lemma 3.2]{young-mckay}}]
For two Young diagrams $\mu$ and $\mu'$, $\mu\pg\mu'$ if and only if $\tenchi\mu_c-\tenchi\mu'_c=0$ or $1$ for any $c\in\Z_{\geq 0}$.
\end{lem}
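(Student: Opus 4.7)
The plan is to use the standard translation between conjugate partitions and ``indicator counts.'' Specifically, I will write
\[
\tenchi\mu_c=\#\{i\geq 1\mid \mu_i\geq c\},\qquad \tenchi\mu'_c=\#\{i\geq 1\mid \mu'_i\geq c\},
\]
so that $\tenchi\mu_c-\tenchi\mu'_c$ counts the number of indices $i$ with $\mu_i\geq c>\mu'_i$. The goal reduces to showing that the interlacing condition $\mu_1\geq\mu'_1\geq\mu_2\geq\mu'_2\geq\cdots$ is equivalent to the statement that, for every $c$, at most one index $i$ satisfies $\mu_i\geq c>\mu'_i$ (together with $\mu_i\geq\mu'_i$ holding for all $i$).

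For the forward direction, I would assume $\mu\pg\mu'$ and fix $c$. Clearly $\mu_i\geq\mu'_i$ gives $\tenchi\mu_c\geq \tenchi\mu'_c$. Suppose $i_0$ is the smallest index with $\mu_{i_0}\geq c>\mu'_{i_0}$. Then from $\mu'_{i_0}\geq \mu_{i_0+1}$ we deduce $\mu_{i_0+1}\leq\mu'_{i_0}<c$, and since the row lengths are non-increasing, $\mu_i<c$ for all $i>i_0$. Hence at most one index contributes, so $\tenchi\mu_c-\tenchi\mu'_c\in\{0,1\}$.

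For the converse, assume $\tenchi\mu_c-\tenchi\mu'_c\in\{0,1\}$ for all $c\in\Z_{\geq 0}$. Summing over $c$ in the appropriate range, or directly: the inequality $\tenchi\mu_c\geq\tenchi\mu'_c$ for all $c$ gives $\mu_i\geq\mu'_i$ for all $i$ (standard fact: $\mu_i$ is the largest $c$ with $\tenchi\mu_c\geq i$). To see $\mu'_i\geq \mu_{i+1}$, set $c:=\mu_{i+1}$; then $\tenchi\mu_c\geq i+1$, so by hypothesis $\tenchi\mu'_c\geq i$, which means $\mu'_i\geq c=\mu_{i+1}$. This establishes the full chain of interlacing inequalities.

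There is no real obstacle here—both directions are bookkeeping in the dictionary between partitions and their conjugates, and the lemma is essentially the assertion that $\mu/\mu'$ being a horizontal strip admits two equivalent descriptions (row-interlacing vs.\ each column receiving at most one added box). The only point requiring a little care is the strict inequality $c>\mu'_{i_0}$ in the forward direction, which is what forces ``at most one'' rather than allowing larger jumps; once that is handled, the rest is immediate.
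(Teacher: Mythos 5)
Your argument is correct. Note that the paper does not prove this lemma at all---it is simply quoted from \cite[Lemma 3.2]{young-mckay}---so there is no in-paper proof to compare against; what you have written is the standard dictionary between the row-interlacing description of a horizontal strip and the ``at most one box added per column'' description, and both directions are carried out correctly (the key points being that $\{i\mid \mu'_i\geq c\}\subseteq\{i\mid\mu_i\geq c\}$ so the difference $\tenchi\mu_c-\tenchi\mu'_c$ counts indices with $\mu_i\geq c>\mu'_i$, and that $\mu'_{i_0}<c$ forces $\mu_{i}<c$ for all $i>i_0$). The only cosmetic caveat is the degenerate index $c=0$ (where $\tenchi\mu_0$ is infinite under your convention) and the case $\mu_{i+1}=0$ in the converse, both of which are vacuous; with those remarks the proof is complete.
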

Note that $\Vmin(n)$ is determined by $\Vmin(n-1)$ and $\Vmin(n+1)$.
\begin{ex}
Assume that $\sigma\circ\theta(n-1/2)=\sigma\circ\theta(n+1/2)=\ulam\circ\theta(n-1/2)=\ulam\circ\theta(n+1/2)=+$. 
Then we have $\tenchi\V(n-1)_c-\tenchi\V(n+1)_c=0$, $1$ or $2$ for any transition $\V$ and
\[
\tenchi\Vmin(n-1)_c-\tenchi\Vmin(n)_c=
\begin{cases}
0 & \tenchi\Vmin(n-1)_c-\tenchi\Vmin(n+1)_c=0,\\
1 & \tenchi\Vmin(n-1)_c-\tenchi\Vmin(n+1)_c=1 \text{ or } 2.\\
\end{cases}
\]
\begin{NB}
\noindent\textup{(2)}
Assume that $\sigma\circ\theta(n-1/2)=\sigma\circ\theta(n+1/2)=\ulam\circ\theta(n+1/2)=+$, $\ulam\circ\theta(n-1/2)=-$. 
Then we have $\tenchi\V(n-1)_c-\tenchi\V(n+1)_c=-1$, $0$ or $1$ for any transition $\V$ and
\[
\tenchi\Vmin(n-1)_c-\tenchi\Vmin(n)_c=
\begin{cases}
-1 & \tenchi\Vmin(n-1)_c-\tenchi\Vmin(n+1)_c=-1,\\
0 & \tenchi\Vmin(n-1)_c-\tenchi\Vmin(n+1)_c=0 \text{ or } 1.\\
\end{cases}
\]

\noindent\textup{(3)}
Assume that $\sigma\circ\theta(n-1/2)=\sigma\circ\theta(n+1/2)=\ulam\circ\theta(n+1/2)=+$, $\ulam\circ\theta(n-1/2)=-$. 
Then we have $\tenchi\V(n-1)_c-\tenchi\V(n+1)_c=-1$, $0$ or $1$ for any transition $\V$ and
\[
\tenchi\Vmin(n-1)_c-\tenchi\Vmin(n)_c=
\begin{cases}
-1 & \tenchi\Vmin(n-1)_c-\tenchi\Vmin(n+1)_c=-1 \text{ or } 0,\\
0 & \tenchi\Vmin(n-1)_c-\tenchi\Vmin(n+1)_c=1.
\end{cases}
\]
\end{NB}%
\end{ex}
For a transition $\V$ of Young diagram, an {\it addable $i$-node} for $\V$ is an element $(n,x,y)$ such that $\pi(n)=i$, $(x,y)\neq \V(n)$ and 
\[
\V'(m):=\begin{cases}
\V(m) & m\neq n,\\
\V(n)\sqcup (x,y) & m=n,
\end{cases}
\]
gives a transition. 
Let $\V^{[i]}$ denote the transition given by adding all addable $i$-node for $\V$. 
\begin{ex}
\begin{NB}
\textup{(1)} 
Unless $\ulam\circ\theta(n-1/2)=-$ and $\ulam\circ\theta(n+1/2)=+$, there is no addable $i$-node in $\Vmin(n)$.
\end{NB}%
Assume that $\sigma\circ\theta(n-1/2)=\sigma\circ\theta(n+1/2)=\ulam\circ\theta(n-1/2)=\ulam\circ\theta(n+1/2)=+$. 
Then $\V^{[i]}(n\pm 1)=\V(n\pm 1)$ and
\[
\tenchi\Vmin(n-1)_c-\tenchi\Vmin(n)_c=
\begin{cases}
0 & \tenchi\Vmin(n-1)_c-\tenchi\Vmin(n+1)_c=0 \text{ or } 1,\\
1 & \tenchi\Vmin(n-1)_c-\tenchi\Vmin(n+1)_c=1.\\
\end{cases}
\]
\begin{NB}
\noindent\textup{(2)}
Assume that $\sigma\circ\theta(n-1/2)=\sigma\circ\theta(n+1/2)=\ulam\circ\theta(n+1/2)=+$, $\ulam\circ\theta(n-1/2)=-$. 
Then $\V^{[i]}(n\pm 1)=\V(n\pm 1)$ and
\[
\tenchi\V^{[i]}(n-1)_c-\tenchi\V^{[i]}(n)_c=
\begin{cases}
-1 & \tenchi\V^{[i]}(n-1)_c-\tenchi\V^{[i]}(n+1)_c=-1 \text{ or } 0,\\
0 & \tenchi\V^{[i]}(n-1)_c-\tenchi\V^{[i]}(n+1)_c=1 .
\end{cases}
\]
\end{NB}%
\end{ex}
For a transition $\V$, $M(\V^{[i]})$ does not have the simple module $S_i$ as its quotient. 
Note that for an $A_\sigma$-module $M$ without the simple module $S_i$ as its quotient, $M$ is identified with an $A_{\sigma\circ\theta\circ\theta_{i}}$-module $\mu_iM$ in $\dmoda$, and 
\[
(\mu_iM)_{i'}=
\begin{cases}
M_{k} & k\neq i,\\
\mathrm{ker}\bigl(M_{i-1}\oplus M_{i+1}\to M_i\bigr) & k=i.
\end{cases}
\]

Let $\Vmin':=\Vmin^{\sigma,\theta\circ\theta_i,\unu,\ulam}$ be the minimal transition of type $(\sigma,\theta\circ\theta_i,\unu,\ulam)$.
\begin{prop}\label{prop_mutation}
As $A_{\sigma\circ\theta\circ\theta_{i}}$-modules, $\mu_iM(\Vmin^{[i]})$ is isomorphic to $M(\Vmin')$. 
\end{prop}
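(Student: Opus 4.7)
The plan is to exhibit an explicit isomorphism of $A_{\sigma\circ\theta\circ\theta_i}$-modules by matching the combinatorial bases on the two sides and verifying compatibility with the quiver action from Definition \ref{defn_cryctal_rep}. First I would compare the two minimal transitions $\Vmin=\Vmin^{\sigma,\theta,\unu,\ulam}$ and $\Vmin'=\Vmin^{\sigma,\theta\circ\theta_i,\unu,\ulam}$ directly from the recursive construction of \cite[\S 3.3.3]{open_3tcy}: the bijection $\theta_i$ only swaps the two half-integers $h,h+1$ with $\pi(h+1/2)=i$, so for integers $n$ lying outside the (finite, in each residue class) set of positions affected by such swaps, $\Vmin$ and $\Vmin'$ agree. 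At the affected positions the two transitions differ in a controlled way described by the examples in the excerpt, and concretely $\Vmin'(n)$ (for $\pi(n)=i$) is obtained from $\Vmin(n)$ by adding exactly one box in each column $c$ where an addable $i$-node is present — which is precisely the combinatorial effect of passing from $\Vmin$ to $\Vmin^{[i]}$ at the $i$-vertex.

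Next I would use the mutation formula recalled just before the proposition: since $M(\Vmin^{[i]})$ has no $S_i$ as a quotient (by construction of the $[i]$-operation), its mutation $\mu_iM(\Vmin^{[i]})$ equals $M(\Vmin^{[i]})_k$ at vertices $k\ne i$ and equals $\ker\bigl(M(\Vmin^{[i]})_{i-1}\oplus M(\Vmin^{[i]})_{i+1}\to M(\Vmin^{[i]})_i\bigr)$ at vertex $i$, where the map is the sum of the two $h^\pm$-actions landing at $i$. I would then write down the tautological bijection between the basis $P(\Vmin')_k$ of $M(\Vmin')_k$ and the basis $P(\Vmin^{[i]})_k$ of $M(\Vmin^{[i]})_k$ for $k\ne i$, using the combinatorial comparison of the previous paragraph; and at vertex $i$, identify $P(\Vmin')_i$ with a basis of the kernel above by matching each element $p(n,x,y)\in P(\Vmin')_i$ to an explicit kernel vector built from the $(n-1)$- and $(n+1)$-level basis elements under the $h^\pm$-maps.

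The final step is to verify that this bijection intertwines the two $A_{\sigma\circ\theta\circ\theta_i}$-actions: the actions of $h^\pm_j$ for $j\ne i\pm 1/2$ and of $r_k$ for $k\ne i$ are preserved trivially since the underlying vector spaces at those vertices are canonically identified, while the new arrows $h^\pm_{i\pm 1/2}$ and $r_i$ of the mutated quiver are exactly the ones produced by the van den Bergh--Iyama--Wemyss mutation procedure used in \cite[Proposition 3.1]{3tcy}, so their matrix elements on the kernel are determined by factorizing the old relations. The main obstacle is this last bookkeeping, and it splits into cases according to whether $\sigma(n\pm 1/2)$ and $\ulam\circ\theta(n\pm 1/2)$ are $+$ or $-$; each case is checked locally at a single $n$ with $\pi(n)=i$, using the Young-diagram succession rules of the form $\overset{\pm}{\succ}$ and the explicit addable-node analysis illustrated in the example preceding the proposition. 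Once all local cases are settled, globality and the vanishing conditions at $n\ll 0$ and $n\gg 0$ (where $\V$ stabilizes to $\nu_\pm$) are automatic, completing the identification $\mu_iM(\Vmin^{[i]})\simeq M(\Vmin')$.
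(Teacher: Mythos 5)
Your proposal follows essentially the same route as the paper: both arguments note that $\Vmin$ and $\Vmin'$ agree at vertices outside the residue class of $i$, compute $\mu_iM(\Vmin^{[i]})$ at vertex $i$ as the kernel of $M(\Vmin^{[i]})_{n-1}\oplus M(\Vmin^{[i]})_{n+1}\to M(\Vmin^{[i]})_n$, pair an explicit spanning set of that kernel with the basis $P(\Vmin')_n$, and then check compatibility with the $A_{\sigma\circ\theta\circ\theta_i}$-action case by case in the signs of $\sigma\circ\theta$ and $\ulam\circ\theta$ at $n\pm1/2$. One caveat: your intermediate assertion that $\Vmin'(n)=\Vmin^{[i]}(n)$ for $\pi(n)=i$ (i.e.\ that replacing $\theta$ by $\theta\circ\theta_i$ adds exactly the addable $i$-nodes) is false in general --- in the all-$+$ case worked out in the paper one has $\Vmin'=\Vmin$ while $\Vmin^{[i]}$ strictly contains $\Vmin$ in every column $c$ with $\tenchi\Vmin(n-1)_c-\tenchi\Vmin(n+1)_c=1$ --- but this misstatement is not load-bearing, since your actual isomorphism is constructed by matching kernel vectors with elements of $P(\Vmin')_n$ rather than through that identification.
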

\begin{proof}
Note that $\Vmin(n)=\Vmin'(n)$ if $\pi(n)\neq i$.
We will define isomorphisms 
\[
\mathrm{ker}\bigl(M(\Vmin)_{n-1}\oplus M(\Vmin)_{n+1}\to M(\Vmin)_n\bigr)\overset{\sim}{\longrightarrow}
M(\Vmin')_n.
\]
for $n$ such that $\pi(n)=i$. For example, assume that $\sigma\circ\theta(n-1/2)=\sigma\circ\theta(n+1/2)=\ulam\circ\theta(n-1/2)=\ulam\circ\theta(n+1/2)=+$. 
Note that $\Vmin=\Vmin'$ in this case.
For $n$ such that $\pi(n)=i$, 
\[
\mathrm{ker}\bigl(M(\Vmin^{[i]})_{n-1}\oplus M(\Vmin^{[i]})_{n+1}\to M(\Vmin^{[i]})_n\bigr)
\]
is spanned by the following elements: 
\[
\begin{array}{lr}
-p(n+1,x,\tenchi\Vmin(n)_x) \quad &  \tenchi\Vmin(n-1)_x-\tenchi\Vmin(n+1)_x=0,\\
p(n-1,x,y)-p(n+1,x,y-1) & \tenchi\Vmin(n-1)_x-\tenchi\Vmin(n+1)_x=0,\\
& \text{and}\  y=2,\ldots,\tenchi\Vmin(n)_x,\\
p(n-1,x,y)-p(n+1,x,y-1) &  \tenchi\Vmin(n-1)_x-\tenchi\Vmin(n+1)_x=1,\\
& \text{and}\  y=2,\ldots,\tenchi\Vmin(n)_x+1,\\
p(n-1,x,\tenchi\Vmin(n)_x+1) & \tenchi\Vmin(n-1)_x-\tenchi\Vmin(n+1)_x=2,\\
p(n-1,x,y)-p(n+1,x,y-1) & \tenchi\Vmin(n-1)_x-\tenchi\Vmin(n+1)_x=2,\\
& \text{and}\ y=2,\ldots,\tenchi\Vmin(n)_x.
\end{array}
\]
The isomorphism is given by mapping one of elements above involving $p(n-1,x,y)$ or $p(n+1,x,y-1)$ to $p'(n,x,y)$. 
\begin{NB}
\begin{description}
\item{if $\tenchi\Vmin(n-1)_x-\tenchi\Vmin(n+1)_x=0$}: $p(n+1,x,\tenchi\Vmin(n)_x)$ and $p(n-1,x,y)-p(n+1,x,y-1)$ ($y=2,\ldots,\tenchi\Vmin(n)_x)$).
\item[if $\tenchi\Vmin(n-1)_x-\tenchi\Vmin(n+1)_x=1$]: $p(n-1,x,y)-p(n+1,x,y-1)$ ($y=2,\ldots,\tenchi\Vmin(n)_x+1)$).
\item[if $\tenchi\Vmin(n-1)_x-\tenchi\Vmin(n+1)_x=2$]: $p(n-1,x,\tenchi\Vmin(n)_x+1)$ and $p(n-1,x,y)-p(n+1,x,y-1)$ ($y=2,\ldots,\tenchi\Vmin(n)_x)$).
\end{description}
\end{NB}%
We can verify this isomorphism respects the actions of $A_{\sigma\circ\theta\circ\theta_{i}}$.
\end{proof}

\noindent{\bf Proof of Proposition \ref{prop_sec_6}}

\noindent (Step 1) 
In the case $\theta=\mathrm{id}$, $\unu=\uemp$ and $\ulam=\uemp$, we have $\Vmin^{\sigma,\mathrm{id},\uemp,\uemp}(n)=\emptyset$ for any $n$ and $M(\Vmin^{\sigma,\mathrm{id},\uemp,\uemp})\simeq P_0\simeq \mathcal{O}_{Y_\sigma}$.
By Proposition \ref{prop_mutation}, we have $M(\Vmin^{\sigma,\mathrm{id},\uemp,\uemp})\simeq \mathcal{O}_{Y_\sigma}\simeq P_{\uemp,\uemp}^\theta$.

\smallskip

\noindent (Step 2)
The main result in \cite[\S 6.4]{3tcy} is the following:
\begin{quote}
Given $\beta$ and $n$, we can take sufficiently small $T>0$ such that giving a crystal of type $(\type)$ is equivalent to giving a data $(\vec{\nu},\vec{\Lambda})$ as in \S \ref{subsec_tv_via_vo}.
\end{quote}
More precisely, we have $L$ intervals $C_{1/2},\ldots,C_{L-1/2}$ disjoint with each other such that in the interval $C_j$ 
\[
\bigsqcup_{n\in C_j}\V(n)
\]
is identified with the $3$-dimensional Young diagram $\Lambda^{(j)}$.
Here, a box in $\Lambda^{(j)}$ is identified with an $L$-tuple elements from successive Young diagrams $\V(n),\ldots,\V(n+L-1)$ for some $n$, which is isomorphic to the skyscraper sheaf of the $j$-th fixed point in $Y_\sigma$ as an $A_{\sigma\circ\theta}$-module.

\smallskip

\noindent (Step 3)
Take sufficiently small $T>0$, such that 
\[
0\to \mathcal{O}_{Z_{\underline{\nu},\underline{\lambda}}} \to \mathcal{O}_{Y_\sigma}\to \II\to 0
\]
is an exact sequence is $\catAzeta$ for $\zeta=\zeta^\circ_{\mathrm{cyc}}+T\eta$.
Note that the closed subscheme $Z_{\underline{\nu},\underline{\lambda}}\subset Y_\sigma$ is decomposed into the disjoint union of closed subschemes
\[
Z_{\underline{\nu},\underline{\lambda}}^{(j)}\subset U^{(j)}\subset Y_\sigma
\]
where $U^{(j)}$ is the toric coordinate locus around the $j$-th fixed point. 

Hence, what we have to show is 
\[
\bigoplus_{n\in C_j,\,(x,y)\in \V(n)}\C\cdot(n,x,y)
\]
is isomorphic to $\mathcal{O}_{\underline{\nu},\underline{\lambda}}^{(j)}:=\mathcal{O}_{Z_{\underline{\nu},\underline{\lambda}}^{(j)}}$ as an $A_{\sigma\circ\theta}$-module.

\smallskip

\noindent (Step 4)
In \cite[Proposition 3.10]{3tcy}, it is shown that the derived equivalence $\Phi$ between $Y_\sigma$ and $A_{\sigma\circ\theta}$ is given by a tilting vector bundle on $Y_\sigma$ which is a direct sum of line bundles. 
Hence $\Phi(\mathcal{O}_{\underline{\nu},\underline{\lambda}}^{(j)})_i$ is isomorphic to $\mathcal{O}_{\underline{\nu},\underline{\lambda}}^{(j)}$, and hence to\[
\bigoplus_{\begin{subarray}{c}n\in C_j,\,\pi(n)=i,\\(x,y)\in \V(n)\end{subarray}}\C\cdot(n,x,y)
\]
as $\mathcal{O}_X$-modules for any $i$. 

Moreover, in \cite[Proposition 3.10]{3tcy} we described the map between line bundles corresponding to the arrow $h^\pm_{k}$ in the quiver, which induces an endomorphisms on $\mathcal{O}_{\underline{\nu},\underline{\lambda}}^{(j)}$. 
We can check this endomorphism coincides with
\[
\bigoplus_{\begin{subarray}{c}n\in C_j,\,\pi(n)=k\mp1/2,\\(x,y)\in \V(n)\end{subarray}}\C\cdot(n,x,y)\to
\bigoplus_{\begin{subarray}{c}n\in C_j,\,\pi(n)=k\pm1/2,\\(x,y)\in \V(n)\end{subarray}}\C\cdot(n,x,y)
\]
given by the Definition \ref{defn_cryctal_rep}. 
Hence Proposition \ref{prop_sec_6} follows.
\vspace{-6mm}\begin{flushright}{$\square$}\end{flushright}
\begin{ex}
In Figure \ref{fig_1}, \ref{fig_3} and \ref{fig_4}, we provide some examples which may help the reader to understand the proof. 
All the examples are in the case of $L_+=L_-=1$, i.e. the conifold case.
In Figure \ref{fig_1} we provide the figure of a part of the grand state crystal in the case of $\unu=\uemp$ and $\ulam=\uemp$.
In Figure \ref{fig_3} and \ref{fig_4} we provide the figures of grand state and 
\[
\bigsqcup_{n\in C_{3/2}}\V(n).
\]
\end{ex}

\begin{figure}[htbp]
  \centering
  \input{fig_1.tpc}
  \caption{$\unu=\uemp$ and $\ulam=\uemp$.}
\label{fig_1}
\end{figure}

\begin{figure}[htbp]
  \centering
  \input{fig_3.tpc}
  \caption{$\unu=\uemp$ and $\ulam=(\emptyset,\square)$}
\label{fig_3}
\end{figure}

\begin{figure}[htbp]
  \centering
  \input{fig_4.tpc}
  \caption{$\unu=(\emptyset,\square)$ and $\ulam=(\emptyset,\square\hspace{-1.5pt}\square)$}
\label{fig_4}
\end{figure}

\bibliographystyle{amsalpha}
\bibliography{bib-ver5}

\end{document}